\definecolor{gr}{rgb}   {0.,   0.69,   0.23 }
\definecolor{bl}{rgb}   {0.,   0.5,   1. }
\definecolor{mg}{rgb}   {0.85,  0.,    0.85}
\definecolor{yl}{rgb}   {0.8,  0.7,   0.}
\definecolor{or}{rgb}  {0.7,0.2,0.2}
\newtheorem{theorem}{Theorem} [section]
\newtheorem{lemma}[theorem]{Lemma}
\newtheorem{proposition}[theorem]{Proposition}
\newtheorem{remark}[theorem]{Remark}
\newtheorem{definition}[theorem]{Definition}
\newtheorem{corollary}[theorem]{Corollary}
\newcommand{\noi}{\noindent}
\newcommand{\Z}{\mathbb{Z}}
\newcommand{\R}{\mathbb{R}}
\newcommand{\T}{\mathbb{T}}
\newcommand{\bul}{\bullet}
\let\P= \undefined
\newcommand{\P}{\mathbf{P}}
\newcommand{\E}{\mathbb{E}}
\newcommand{\PP}{\mathbb{P}}
\newcommand{\Law}{\textup{Law}}
\newcommand{\F}{\mathcal{F}}
\newcommand{\al}{\alpha}
\newcommand{\be}{\beta}
\newcommand{\dl}{\delta}
\newcommand{\nb}{\nabla}
\newcommand{\Dl}{\Delta}
\newcommand{\eps}{\varepsilon}
\newcommand{\g}{\gamma}
\newcommand{\ld}{\lambda}
\newcommand{\s}{\sigma}
\newcommand{\Si}{\Sigma}
\newcommand{\ft}{\widehat}
\newcommand{\wt}{\widetilde}
\newcommand{\cj}{\overline}
\newcommand{\dx}{\partial_x}
\newcommand{\dt}{\partial_t}
\newcommand{\LRA}{\Longrightarrow}
\newcommand{\ta}{\theta}
\renewcommand{\l}{\ell}
\renewcommand{\o}{\omega}
\renewcommand{\O}{\Omega}
\newcommand{\les}{\lesssim}
\newcommand{\ges}{\gtrsim}
\newcommand{\jb}[1]
{\langle #1 \rangle}
\newcommand{\ind}{\mathbf 1}
\renewcommand{\S}{\mathcal{S}}
\newcommand{\gf}{\mathfrak{g}}
\newcommand{\M}{\mathcal{M}}
\newcommand{\N}{\mathbb{N}}
\newcommand{\NN}{\mathcal{N}}
\renewcommand{\H}{\mathcal{H}}
\newtheorem*{ackno}{Acknowledgements}
\newcommand{\ze}{\zeta}
\newcommand{\upze}{\upzeta}
\newcommand{\Zf}{\mathfrak{Z}}
\newcommand{\zf}{\mathfrak{z}}
\newcommand{\fs}{\mathsf{f}}
\newcommand{\fns}{\fs_{N, \psi}}
\newcommand{\Zs}{\mathsf{Z}}
\newcommand{\zs}{\mathsf{z}}
\newcommand{\us}{\mathsf{u}}
\newcommand{\Cs}{\mathsf{C}}
\newcommand{\Ys}{\mathsf{Y}}
\newcommand{\too}{\longrightarrow}
\newcommand{\I}{\mathcal{I}}
\newcommand{\J}{\mathcal{J}}
\newcommand{\If}{\mathfrak{I}}
\newcommand{\D}{\mathcal{D}}
\newcommand{\C}{\mathcal{C}}
\newcommand{\vp}{\varphi}
\newcommand{\bP}{\mathbb{P}}
\newcommand{\uu}{\mathbf{u}}
\newcommand{\fn}{f_{N, \psi}}
\newcommand{\sF}{\mathscr{F}}
\newcommand{\vv}{\mathbf{v}}
\numberwithin{equation}{section}
\numberwithin{theorem}{section}
\begin{document}
\baselineskip = 14pt

\title[Probabilistic well-posedness  beyond variance blowup I]
{Probabilistic well-posedness of dispersive PDEs  beyond variance blowup I:
Benjamin-Bona-Mahony equation}

\author[G.~Li, J.~Li, T.~Oh, and N.~Tzvetkov]
{Guopeng Li, Jiawei Li, Tadahiro Oh, and Nikolay Tzvetkov}

\address{
Guopeng Li, 
School of Mathematics and Statistics, Beijing Institute of Technology, Beijing 100081, China}

\email{guopeng.li@bit.edu.cn}

\address{
Jiawei Li, 
School of Mathematics\\
The University of Edinburgh\\
and The Maxwell Institute for the Mathematical Sciences\\
James Clerk Maxwell Building\\
The King's Buildings\\
Peter Guthrie Tait Road\\
Edinburgh\\ 
EH9 3FD\\
 United Kingdom}

\email{jiawei.li@ed.ac.uk}

%

\address{
Tadahiro Oh, School of Mathematics\\
The University of Edinburgh\\
and The Maxwell Institute for the Mathematical Sciences\\
James Clerk Maxwell Building\\
The King's Buildings\\
Peter Guthrie Tait Road\\
Edinburgh\\ 
EH9 3FD\\
 United Kingdom, 
 and 
 School of Mathematics and Statistics, Beijing Institute of Technology,
Beijing 100081, China
}

\email{hiro.oh@ed.ac.uk}

\address{
Nikolay Tzvetkov,
Ecole Normale Sup{\'e}rieure de Lyon\\
UMPA\\
UMR CNRS-ENSL 5669\\
46, all{\'e}e d'Italie\\ 
69364-Lyon Cedex 07\\ 
France}

\email{tzvetkov@ens-lyon.fr}

\subjclass[2020]{35Q35, 35R60, 60H15,  60H30}

\keywords{probabilistic well-posedness; random initial data;
Benjamin-Bona-Mahony equation;
variance blowup; renormalization;
weakly nonlinear interaction; 
fourth moment method}

\begin{abstract}

We investigate a possible extension of probabilistic well-posedness theory
of nonlinear dispersive PDEs with random initial data beyond variance blowup.
As a model equation, we study 
the Benjamin-Bona-Mahony equation (BBM)
with Gaussian random initial data.
By introducing a suitable
vanishing multiplicative renormalization constant
on the initial data, 
we show that 
solutions to  BBM with the renormalized  Gaussian random initial data
beyond variance blowup
converge in law to 
a solution to the {\it stochastic} BBM forced by 
the derivative of
a spatial white noise.
By considering alternative renormalization, 
we show that 
solutions to the renormalized BBM with the frequency-truncated
Gaussian initial data
converges in law to 
a solution to the linear stochastic BBM
with the full Gaussian initial data, forced
by 
the derivative of
a spatial white noise.
This latter result holds
for 
 the Gaussian random initial data  of arbitrarily low regularity.
We also establish analogous results
for the stochastic BBM forced by a fractional derivative of 
a space-time white noise.

\end{abstract}


\maketitle

\tableofcontents

\section{Introduction}

\subsection{Overview}
In seminal works \cite{BO94, BO96}, 
in  constructing invariant Gibbs measures
for nonlinear Schr\"odinger equations (NLS), 
Bourgain 
initiated the probabilistic well-posedness study
of nonlinear dispersive PDEs with random initial data.
Over the last fifteen years, 
probabilistic well-posedness
of dispersive equations, broadly interpreted
with random initial data and\,/\,or (additive) stochastic forcing, 
has attracted much attention and has been studied intensively;
see, for example, 
\cite{BT1, CO, BT3, BOP2, GKO,  OTh2, 
 STz1,  Bring0, STz2,  OOT1, Bring1, 
OOT2, 
STzX, BCST}.
See also surveys \cite{BOP4, Tzv1, DNY5}.
In particular, over the last five years, 
we have witnessed several breakthrough
results
\cite{GKO2, DNY2, DNY3, 
BDNY}, 
introducing novel tools and ideas such as paracontrolled
calculus in the dispersive setting, 
random averaging operators, 
and the theory of random tensors.

In \cite{DNY2}, Deng, Nahmod, and Yue introduced the notion of 
probabilistic scaling critical regularity, 
which roughly\footnote{\label{FT:1}In order to make things a priori computable, 
a certain simplification such as a dyadic frequency restriction
was introduced in \cite{DNY2} in computing 
a probabilistic scaling critical regularity.}
 corresponds to the regularity
above which 
the second Picard  iterate\footnote{Strictly speaking, the second Picard iterate minus the linear solution.}
 (see, for example,  \eqref{exp2})
is smoother than the random linear solution
(or the stochastic convolution in the stochastically forced case);
see also \cite[Subsection 1.2]{FOW}.
This notion of probabilistic scaling criticality
has provided useful heuristics
for threshold regularities, regarding probabilistic local well-posedness
of some dispersive equations.
For example,  in~\cite{DNY3}, 
the authors introduced the theory 
of random tensors
and showed
that NLS with the gauge-invariant nonlinearity on $\T^d$:
\begin{align*}
i \dt u - \Dl u + |u|^{2k} u = 0, \quad k \in \N, 
\end{align*}

\noi
is almost surely locally well-posed
with respect to the Gaussian random initial data of the form\footnote{With obvious modifications
for the complex-valued setting on $\T^d$.}
\eqref{data1}
in the full probabilistically scaling subcritical regime.

As emphasized in \cite{DNY5}, 
 the probabilistic scaling heuristics 
  provides only a guiding principle
and should {\it not}
be understood that the actual threshold between 
almost sure local well-posedness and ill-posedness
is always given by 
a probabilistic scaling critical regularity.
Indeed, 
there are recent results
\cite{Forlano, OO, Liu}
on {\it variance blowups} 
for dispersive PDEs
(see \eqref{div1} and Remark~\ref{REM:var1}), 
showing that, due to  divergence 
of the variance of a certain (renormalized) stochastic term, 
the standard probabilistic local well-posedness theory, 
based on the first order expansion 
\cite{McK, BO96, DPD}
(see \eqref{exp1}) or its higher order variant
\cite{BOP3, OPTz, GKO2},  
breaks down 
before reaching the threshold regularities
predicted by the probabilistic scaling heuristics.
In these works, 
the discrepancy between 
the probabilistic scaling prediction and 
actual probabilistic well-\,/\,ill-posedness
comes from the simplification introduced in computing 
a probabilistic scaling critical regularity
 mentioned in Footnote \ref{FT:1};
see 
\cite[Remark 1.8]{OO}
and  \cite[discussion after Proposition 1.5]{Liu}.

In this paper, we aim to 
seek for  a possible 
extension
of probabilistic well-posedness theory
of dispersive PDEs beyond variance blowup
by working on  a concrete example
(namely the 
Benjamin-Bona-Mahony equation \eqref{BBM0}).
See Theorems \ref{THM:3}
and \ref{THM:3a}
for the statements of our main results.

\begin{remark}\rm

Related to the probabilistic well-posedness study, 
there 
are results 
on 
pathological behavior of solutions with low regularity random initial data
(such as almost sure norm inflation); see
\cite{STz0, OOT, OOPTz}
for further details.

\end{remark}

\subsection{Benjamin-Bona-Mahony equation}

As a model  equation, 
we consider the following Benjamin-Bona-Mahony equation (BBM)
 on  the circle $\T=\R / (2\pi \Z)$:
\begin{align}
\begin{cases}
\dt u-\partial_{txx}u+\partial_{x}u
+\partial_{x}(u^{2})  = 0  \\
u|_{t = 0} = u_0, 
\end{cases}
\quad  (t,x) \in \R\times \T,
\label{BBM0}
\end{align}

\noi
where $u$ is a real-valued unknown.
With 
$D=-i\partial_{x}$, define the operator 
$\vp(D)$ by 
\begin{align}
\vp(D)=
- (1-\partial_{x}^{2})^{-1}\partial_{x}.
\label{phi1}
 \end{align}

\noi
Namely, 
 $\vp(D)$ is the Fourier multiplier operator 
with symbol:
\begin{align}
 \vp(n)=\frac{-i n}{1+n^{2}}.
\label{phi2}
\end{align}

\noi
Then, we can rewrite \eqref{BBM0}
 as
\begin{align}
\begin{split}
\dt u 
& = \varphi(D)  u+ \NN(u) \\
:\! & = \varphi(D)  u+\varphi(D) (u^2). 
\end{split}
\label{BBM1}
\end{align}

\noi
In the following, we  use~\eqref{BBM0} and \eqref{BBM1}
interchangeably.

The BBM equation \eqref{BBM0},
also known as the regularized long-wave equation, 
is a model for the unidirectional propagation of long-crested surface water waves; see, for example, \cite{Pere, BBM, BB73}. 
In particular, in \cite{BBM}, it was proposed as an alternative model to the Korteweg-de Vries  equation. 
By exploiting the 
smoothing property of the operator $\vp(D)$ appearing in~\eqref{BBM1}, 
Bona and the fourth author 
\cite{BT09}
proved that~\eqref{BBM0}  is globally well-posed
in $H^s(\R)$, $s \ge 0$,  
whose argument also applies 
 to the 
 periodic setting, 
 yielding global well-posedness in $H^s(\T)$, $s \ge 0$;
see \cite{RO10} for the details.
See also \cite{BCS1,BCS2}.
Furthermore, the aforementioned well-posedness results
are sharp in the sense that \eqref{BBM0}
is ill-posed in $H^s(\M)$ for $s < 0$, $\M = \R$ or $\T$; 
see 
 \cite{BT09, 
 PA11,  BD16, Forlano}.
In the following, we restrict our attention to the periodic setting.

In an effort to extend the well-posedness theory to Sobolev spaces
of negative regularity, Forlano \cite{Forlano}
studied the well-posedness issue of \eqref{BBM1} with  the following Gaussian\footnote{By imposing
appropriate conditions, 
the Gaussianity assumption on $g_n$
may be dropped to obtain the results in \cite{Forlano}.
See \cite[Appendix A]{Forlano}.} random initial data $u_0 = u_0^\o(\al)$ of the form:\footnote{By convention, we endow
$\T$ with the normalized Lebesgue measure $ dx_{_\T} =  (2\pi)^{-1}dx$
such that we do not need to carry factors involving $2\pi$.}
\begin{align}
u_0 = u_0^\o (\al) = \sum_{n\in \Z} \frac{g_n(\o) }{\jb{n}^\al} e_n ,
\label{data1}
\end{align}

\noi
where $\al \in \R$, 
$\jb{n} = \sqrt{1+|n|^2}$, 
$e_n(x) = e^{inx}$, 
and 
$\{g_n\}_{n\in \Z}$ is a sequence of independent 
complex-valued standard Gaussian random variables on some probability space 
$(\Omega, \sF, \bP)$ conditioned  that 
$g_{-n} = \overline{g_n}$, $n \in \Z$.
It is easy to see that $u_0$ in \eqref{data1}
belongs almost surely  to~$W^{s, p}(\T) \setminus W^{\al - \frac 12, p}(\T)$ for any 
$s < \al - \frac 12$ and  $1\leq p \leq \infty$.
Note that for $\al \le \frac 12$, 
the random function $u_0$ in \eqref{data1}
does not belong to $L^2(\T)$, almost surely.

Let $\zf$ be the random linear solution to \eqref{BBM1}
with  the random initial data 
 $u_0 = u_0^\o(\al)$ 
in~\eqref{data1}:
\begin{align}
\zf (t)= S(t) u_0
= e^{t\varphi(D)}u_0, 
\label{lin0}
\end{align}

\noi
where 
 $S(t)=e^{t\varphi(D)}$ denotes the linear  propagator
for \eqref{BBM1}.
By  considering the first order expansion
\cite{McK, BO96, DPD}:
\begin{align}
u = \zf + v
\label{exp1}
\end{align}

\noi
and studying the equation satisfied by the remainder term $v = u - \zf$:
\begin{align}
\begin{cases}
\dt v  = \varphi(D)  v+\NN (v+\zf)\\ 
v|_{t = 0} = 0, 
\end{cases}
\label{BBM1a}
\end{align}

\noi
Forlano 
 \cite{Forlano}
showed that, when $\frac 14 < \al \le \frac 12$, 
 \eqref{BBM1} 
(hence \eqref{BBM0})
is almost surely locally well-posed
with respect to the random initial data $u_0$ in \eqref{data1}.
See also \cite[Theorem~1.2]{Forlano}\footnote{Theorem 1.5 in the arXiv version.}
for almost sure global well-posedness
of~\eqref{BBM1} when $\al = \frac 12$
(namely, slightly outside $L^2(\T)$), 
following the globalization argument developed in~\cite{GKOT}
for 
singular stochastic wave equations.

In the same paper \cite{Forlano}, Forlano also showed that 
the aforementioned almost sure local well-posedness result is sharp
in the  sense that {\it variance blowup} occurs for $\al \le \frac 14$; see~\eqref{div1} 
and Remark \ref{REM:var1} below.
In proceeding with the first order expansion \eqref{exp1}
or its higher order variant, 
it is crucial that the second Picard iterate $\Zf$, defined by\footnote{Once again, strictly speaking, 
$\Zf$ is the second Picard  iterate minus the linear solution $\zf$.
For simplicity, however, we refer to $\Zf$ as the second Picard  iterate.
The same convention applies to the rest of the paper.}
\begin{align}
\begin{split}
\Zf (t)
&  = \I \big(  \NN( \zf ) \big)(t)
  = \I \big(  \vp(D) (\zf^2) \big)(t)\\
:\!&  =  \int^t_0 S(t-t') \vp(D) (\zf^2)(t') dt',
\end{split}
\label{exp2}
\end{align}
 
 \noi
makes sense, at least,  as a $\D'(\T)$-valued continuous  (in time) function 
(if we aim to construct a solution $u$ to \eqref{BBM1} in $C([0, T]; H^s(\T))$
for some $T>0$ and $s \in \R$).
Here, 
$\NN(u)$ and $\zf$ are as in \eqref{BBM1}
and \eqref{lin0}, respectively, 
and 
$\I$ denotes the Duhamel integral operator for \eqref{BBM1}, 
In~\cite{Forlano}, 
the author   showed that, 
when $\al \le \frac 14$, 
the second Picard  iterate $\Zf (t)$ in \eqref{exp2}
does {\it not} exist as a spatial distribution (for 
any fixed\footnote{In the remaining part of this paper, 
we restrict our attention to positive times.} $t > 0$).
More precisely, 
given $N \in \N$, 
let $\Zf_N$ be the frequency truncated version of $\Zf$
defined by 
\begin{align}
\Zf_N (t)
 = \I \big(  \NN( \P_N \zf ) \big)(t), 
\label{exp3}
\end{align}

\noi
where
$\P_N$ 
is  the Dirichlet projector onto the frequencies 
$\{|n| \leq N\}$; see \eqref{Diri1}.
Then, it was shown in \cite{Forlano}
that, when $\al \le \frac 14$, 
we have 
\begin{align}
\lim_{N \to \infty} 
\E\Big[|\jb{  \Zf_N(t), \psi}|^2\Big] = \infty
\label{div1}
\end{align}

\noi
for any $t > 0$
and any non-zero test function $\psi \in \D(\T) = C^\infty(\T)$, 
where 
$\jb{f, g}$ 
denotes the $\D'(\T)$-$\D(\T)$ duality pairing.
See 
\cite[Remark 3]{Forlano}\footnote{Remark 1.4 in the arXiv version.}
and \cite[Subsection 3.2]{Forlano}.

\begin{remark}\label{REM:var1}\rm

We use the term {\it variance blowup}
to describe the situation, 
where (i)~the variance of a certain stochastic term diverges (see \eqref{div1}
for the BBM case) at a threshold
{\it but} (ii)~the analytical framework still continues to hold beyond the threshold.
See the introduction in \cite{Hairer}
for a further discussion.

Let us consider the BBM case.
It is easy to show that for $\frac 14 < \al \le \frac 12$, 
$\Zf_N$ in \eqref{exp3}  converges almost surely to $\Zf$ in \eqref{exp2} 
in $C([0, T]; H^{2\al-\eps}(\T))$ for any $\eps > 0$;\footnote{In the remaining part
of the paper, we use $\eps > 0$ to denote an (arbitrarily) small constant.}
see \cite{Forlano}.
For now, let us  ignore the divergence~\eqref{div1}
and {\it pretend} that the second Picard iterate 
$\Zf  =  \I \big(  \NN( \zf ) \big)$
exists
as an element in $C([0, T]; H^{2\al-\eps}(\T))$
even for $\al \le \frac 14$.
Then, 
by applying  the product estimate (Lemma \ref{LEM:BOZ})
to the following Duhamel formulation of \eqref{BBM1a}:
\begin{align*}
v(t) = \I \big(\NN(v)\big)(t)
+2 \I \big(\vp(D)(v \zf)\big)(t)
+ \Zf(t), 
\end{align*}

\noi
we could show that  \eqref{BBM1a}
would be  almost surely locally well-posed, at least for $\al > \frac 16$.\footnote{The condition
$\al > \frac 16$ appears in making sense of the product
$v \zf$ for $v \in C([0, T]; H^{2\al-\eps}(\T))$ 
and $\zf \in C([0, T]; W^{\al- \frac 12 -\eps, \infty}(\T))$, using Lemma \ref{LEM:BOZ}.} 
Namely,  
the analytical framework for proving almost sure
local well-posedness of \eqref{BBM1a}
extends beyond the threshold $\al = \frac 14$.

In recent years, 
variance blowup phenomena have been 
 observed
in various contexts;
SDEs~\cite{CQ}, 
 the fractional KPZ equation 
\cite[Subsection 4.9]{Hoshino}, 
the stochastic wave and heat equations~\cite{OO}
(see also \cite{Deya2} for a related result), 
and NLS \cite{Liu}.
We note that 
this phenomenon has been observed
only in the situation
when
(i)~given random initial data is rougher than the Gaussian free field 
or (ii)~given stochastic forcing is rougher than that of a space-time white noise.
In the case of nonlinear dispersive PDEs with random initial data
or additive stochastic forcing, 
variance blowup in particular implies that 
 the standard probabilistic well-posedness theory, 
based on the first order expansion (as in~\eqref{exp1})
or its higher order variant (see, for example, \cite{BOP3, OPTz,  GKO2}), 
completely breaks down. 
See~\cite{OO} for a further discussion.

\end{remark}

As mentioned above, 
our main goal in this paper is 
to investigate a possible 
extension
of probabilistic well-posedness theory
of BBM \eqref{BBM0}
beyond the variance blowup threshold $\al = \frac 14$.
By introducing a suitable
vanishing multiplicative renormalization constant
on the  initial data (see \eqref{BBM2}), 
we will show that 
solutions to BBM with Gaussian random initial data
beyond variance blowup
converge in law to 
a solution to the {\it stochastic} BBM forced by 
the derivative of
a 
spatial white noise;
see Theorem \ref{THM:3}.
See also Theorem 
\ref{THM:3a} where we 
consider   alternative renormalization
and study the limiting behavior of  the weakly interacting BBM \eqref{BBM8}.

\begin{remark}\rm

We point out that BBM \eqref{BBM0}
is {\it self-renormalizing}
due to the quadratic nonlinearity with a derivative, 
just as in the case of the (stochastic) KdV equation \cite{KPV, BO97, Oh1, Oh2, Oh3, OQS}
and the stochastic Burgers equation \cite{GP}.

Let us consider \eqref{BBM1}.
Fix 
$\frac 14 < \al \le \frac 12$.
Then, from \eqref{lin0}
with 
 \eqref{data1} and the fact that $\vp(-n) = -\vp(n)$, we have 
\begin{align*}
\eta_N = \E\big[(\P_N \zf(x))^2\big]
= \sum_{|n|\le N} \frac{1 }{\jb{n}^{2\al}} 
\too \infty, 
\end{align*}

\noi
as $N \to \infty$.
This  shows that $(\P_N \zf)^2$ diverges as $N \to \infty$, 
forcing us 
 to consider the renormalized power
$(\P_N \zf)^2 - \eta_N$.
It follows from a standard computation with 
Lemma~\ref{LEM:OOT}
that 
$(\P_N \zf)^2 - \eta_N$ converges almost surely 
in $C([0, T]; W^{2\al - 1- \eps, \infty}(\T))$ as $N \to \infty$.
Then, from~\eqref{phi1} and \eqref{phi2} (in particular, noting that $\vp(n)|_{n = 0} = 0$), 
we see that 
\begin{align}
 \NN(\P_N \zf ) 
= \varphi(D) \big((\P_N \zf )^2 \big)
= \varphi(D) \big((\P_N \zf )^2  - \eta_N\big)
\label{FO1}
\end{align}

\noi
converges 
 almost surely 
in $C([0, T]; W^{2\al - \eps, \infty}(\T))$ as $N \to \infty$.
Namely, thanks to the presence of the operator $\vp(D)$
in $\NN(u) = \vp(D)(u^2)$, 
the renormalization constant $\eta_N$
disappears in~\eqref{FO1}
and, hence, 
$ \NN(\P_N \zf ) $ (and thus $\Zf_N = \I\big( \NN(\P_N \zf ) \big)$)
converges without (explicit) renormalization; see
\cite[Remark 2]{Forlano}\footnote{Remark 1.3 in the arXiv version.}
for a related discussion.

\end{remark}

\begin{remark}\rm
We also mention
the  related  works
 \cite{DS1, DS2, DST}
on BBM \eqref{BBM0} with random initial data.

\end{remark}

\subsection{Renormalization of BBM beyond variance blowup}

In the study of singular stochastic parabolic PDEs, 
 a  variance blowup phenomenon
has been observed for the following fractional KPZ equation on $\T$:\footnote{For simplicity
of the presentation, we ignore the renormalization ``$(\dx h)^2 - \infty$''
on the quadratic nonlinearity $(\dx h)^2$ in \eqref{KPZ1}.}
\begin{align}
\dt h = \dx^2 h  + (\dx h)^2 + |\dx|^{\al}\xi
\label{KPZ1}
\end{align}

\noi
for  $\al \ge \frac 14$, where $\xi$ denotes a space-time white noise
on $\R_+\times \T$;
see \cite[Subsection 4.9]{Hoshino}.
In a recent work~\cite{Hairer}, 
Hairer 
introduced 
a new   renormalization procedure beyond the variance blowup
(by considering the  $\al = 1$ case in \eqref{KPZ1}), 
which we explain below.
Given $N \in \N$, 
consider the following fractional 
KPZ (with $\al = 1$):
\begin{align}
\dt h_N = \dx^2 h_N + \big((\dx h_N)^2 - C_N \big) + N^{-\frac 34}\P_N  \dx \xi, 
\label{KPZ2}
\end{align}

\noi
where
the frequency-truncated
noise $\P_N  \dx \xi$
is endowed with 
a vanishing multiplicative renormalization constant
$ N^{-\frac 34}$.
Then, Hairer showed that solutions to \eqref{KPZ2}
converge {\it in law}
to a solution to the standard KPZ forced by a space-time white noise
(namely, \eqref{KPZ1} with $\al = 0$) as $N \to \infty$
(with an appropriate assumption on deterministic initial data).
A key ingredient for establishing this convergence result is the  observation
 that 
the second Picard iterate 
for the  fractional KPZ \eqref{KPZ2} 
with the renormalized noise $ N^{-\frac 34}\P_N  \dx \xi$
converges in law to the stochastic convolution
(= the linear solution)
for the standard KPZ~\eqref{KPZ1} with $\al = 0$:
\[ \int_0^t e^{(t - t')\dx^2} \xi (dt'), \]

\noi
as $N \to \infty$.
See \cite[Proposition 2.2]{Hairer}
for a precise statement (at the level of noises).
In \cite{Hairer}, 
this latter convergence result on the second Picard iterates 
was established by  a central limit theorem
via 
 the {\it fourth moment theorem}
due to  Nualart and Peccati
 \cite{NP05};   see Lemma \ref{LEM:FMT}.
In the same paper \cite{Hairer}, 
Hairer also introduced renormalization beyond variance blowup
in the SDE context;
see  \cite[Proposition~3.11]{Hairer}.

\begin{remark} \rm 
(i) 
In \cite{GT25}, Gerencs\'er and Toninelli
also studied the issue of renormalization beyond variance blowup
for the fractional KPZ \eqref{KPZ1}.
They considered  the following renormalized version:
\begin{align}
\dt h_N = \dx^2 h_N + N^{\frac 12 - 2\al} (\dx h_N)^2 - C_N  + \P_N  |\dx|^\al  \xi, 
\label{KPZ3}
\end{align}

\noi
where 
a vanishing multiplicative renormalization constant now appears on the nonlinearity.
By introducing an appropriate correction term 
(depending on $ \P_N  |\dx|^\al  \xi$)
on initial data, 
they showed that, for any $\al >  \frac 14$,   solutions 
to \eqref{KPZ3}
converges in law to a solution to the following linear equation:
\begin{align*}
\dt h = \dx^2 h + c_\al  \xi_1  +  |\dx|^\al  \xi_2, 
\end{align*}

\noi
where $\xi_1, \xi_2$ are independent copies of  a  space-time white noise.

\smallskip

\noi
(ii) 
In the study of stochastic parabolic PDEs, 
(super-)critical models,
such as
the KPZ equation and the stochastic heat equation (with a multiplicative space-time white noise)
in 
the higher dimensional setting, 
have attracted a wide attention in recent years.
Phenomena analogous to 
variance blowup have been observed
and 
renormalizations analogous to \eqref{KPZ2}
or \eqref{KPZ3} have been implemented;
see, for example, 
\cite{BC, MSZ, CSZ17, GRZx, CD, CCM, GQT, DG, CSZ2, GRZ, GT25}.

\end{remark}

\medskip

Let us now turn our attention to the BBM equation 
\eqref{BBM0}  with the random initial data $u_0 = u_0^\o(\al)$ in ~\eqref{data1}
beyond the variance blowup, following Hairer's approach \cite{Hairer}.
Fix $ \al \le \frac 14$.
Given $N \in \N$, we define 
the renormalization constant 
  $C_{\al, N}$ by setting 
\begin{align}
C_{\al, N}= 
\bigg( \sum_{|n|\le N }
\frac{2}{\jb{n}^{4\al}}\bigg)^{-\frac 14}
\sim 
\begin{cases}
(\log \jb{N})^{-\frac 14} & \text{for } \al = \frac 14,\\
\jb{N}^{\al-\frac 14}
  &  \text{for }   \al<\frac 14.
\end{cases}
\label{CN}
\end{align}

\noi
Note that 
\begin{align}
C_{\al, N} \too 0\quad \text{as}\quad N \too \infty.
\label{CN1}
\end{align}

\noi
We then  consider the  BBM equation 
\eqref{BBM0}
with the renormalized (frequency-truncated) random initial data:
\begin{align}
\begin{cases}
\dt u_N-\partial_{txx}u_N+\partial_{x}u_N
+\partial_{x}(u_N^{2})  = 0  \\
u_N|_{t=0} = C_{\al, N} \P_{N} u_0, 
\end{cases}
\label{BBM2}
\end{align}

\noi
or equivalently, 
\begin{align}
\begin{cases}
\dt u_N  = \vp(D) u_N + \NN(u_N)  \\
u_N|_{t=0} = C_{\al, N} \P_{N} u_0, 
\end{cases}
\label{BBM3}
\end{align}

\noi
where
 $u_0 = u_0^\o(\al)$ is the Gaussian random initial data  in \eqref{data1}.
Here,  $\vp(D)$ and $\NN(u)$ are  as in~\eqref{phi1} and \eqref{BBM1}, respectively.
We note that, thanks to the frequency truncation on the initial data,~\eqref{BBM2} (and~\eqref{BBM3}) is globally well-posed.
In the following, we study the limiting behavior of the solution $u_N$ to \eqref{BBM2} as $N \to \infty$.

Consider the following  second order expansion for $u_N$:
\begin{align}
u_N
&  =  z_N +Z_N + v_N, 
\label{exp3x}
\end{align}

\noi
where $z_N$
denotes the random linear solution given by 
\begin{align}
\begin{split}
z_N (t)  
& = S(t) C_{\al, N} \P_N u_0\\
& = \sum_{n \in \Z} e^{t\vp(n)} 
C_{\al, N} 
\frac{g_n}{\jb{n}^\al}
\ind_{|n|\le N}\cdot 
 e_n
 \end{split}
 \label{lin1}
\end{align}

\noi
and $Z_N$ denotes the second Picard iterate defined by 
\begin{align}
Z_N (t)
= \I\big(\NN(z_N) \big)(t)
 =  \int^t_0 S(t-t') \vp(D)  (z_N^2)(t') dt'.
 \label{I3}
\end{align}

\noi
Then, the remainder term 
$v_N = u_N - z_N - Z_N$ satisfies the following equation: 
\begin{align}
\begin{cases}
\dt v_N= 
\vp(D) v_N 
+ \vp(D) 
\big(
(v_N +  z_N +Z_N )^2 - z_N^2\big)\\
v_N|_{t=0} = 0.
\end{cases}
\label{BBM4}
\end{align}

From \eqref{lin1} with \eqref{lin0}, 
we have $z_N =C_{\al, N} \P_N \zf $.
In particular, it follows from  
 \eqref{CN1} that 
$z_N$ converges to $0$ as $N \to \infty$; see Lemma \ref{LEM:sto}.
Next, we study convergence of the second Picard iterate $Z_N$ in \eqref{I3}.
Define $Z$ to be the solution to the following linear 
stochastic equation, forced by 
 the derivative of a spatial white noise:
\begin{align}
\begin{cases}
\dt Z-\partial_{txx}Z+\partial_{x}Z
 = \dx \ze  \\
Z|_{t=0} =0, 
\end{cases}
\label{BBM4a}
\end{align}

\noi
or equivalently, 
\begin{align}
\begin{cases}
\dt Z = \vp(D) Z - \vp(D) \ze  \\
Z|_{t=0} = 0. 
\end{cases}
\label{BBM4b}
\end{align}

\noi
Here, 
$\ze$ denotes a spatial white noise on $\T$:
\begin{align}
\ze = \sum_{n\in \Z}{\gf_n}(\o) e_n, 
\label{I3b}
\end{align}

\noi
where 
 $\{\gf_n\}_{n\in \Z}$ is a sequence of independent 
 complex-valued standard Gaussian random variables conditioned that 
$\gf_{-n} = \cj{\gf_n}$, $n \in \Z$.
By writing \eqref{BBM4b} in the Duhamel formulation, 
we have 
\begin{align}
Z(t) =
-  \int^t_0 S(t-t') \vp(D) \ze dt'.
\label{lin2}
\end{align}

\noi
The following proposition
establishes 
a key convergence result of 
the second Picard iterate $Z_N$ in \eqref{I3}
to $Z$ in \eqref{lin2}.
We emphasize that this convergence takes place only {\it in law}
as in Hairer's work
\cite{Hairer}.

\begin{theorem}
\label{THM:1}

Let $\al\le \frac 14$.
Given any $s < \frac 12$, 
the second Picard iterate $Z_N$ defined in~\eqref{I3}
converges in law to the Gaussian process $Z$ defined in \eqref{lin2}
in $C(\R_+; W^{s,\infty}(\T))$
as $N \to \infty$, 
where
$C(\R_+; W^{s,\infty}(\T))$
is 
endowed with the
compact-open topology in time.

\end{theorem}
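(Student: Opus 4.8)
The plan is to prove convergence in law by a two-step argument: (i) tightness of the laws of $\{Z_N\}$ in $C(\R_+;W^{s,\infty}(\T))$, and (ii) identification of every weak limit point with the law of the Gaussian process $Z$. Since $Z_N$ is a sum of iterated stochastic integrals of fixed order (quadratic in the Gaussians $g_n$), the natural tool for (ii) is a central limit theorem in fixed Wiener chaos: the fourth moment theorem of Nualart--Peccati (Lemma~\ref{LEM:FMT}), following Hairer \cite{Hairer}. I would begin by computing the Fourier coefficients of $Z_N(t)$ explicitly from \eqref{I3}: for each $n$, $\widehat{Z_N}(t,n)$ is a finite bilinear expression $\sum_{n_1+n_2=n} c_{n_1,n_2}^{N}(t)\, g_{n_1} g_{n_2} \ind_{|n_1|,|n_2|\le N}$, where the time-dependent coefficient $c_{n_1,n_2}^N(t)$ comes from the Duhamel integral $\int_0^t e^{(t-t')\vp(n)} \vp(n) e^{t'(\vp(n_1)+\vp(n_2))}\,dt'$ times $C_{\al,N}^2 \jb{n_1}^{-\al}\jb{n_2}^{-\al}$. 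The first key computation is the second moment: $\E[|\widehat{Z_N}(t,n)|^2]$, which by orthogonality of chaoses reduces to a diagonal sum $\sum_{n_1+n_2=n}|c_{n_1,n_2}^N(t)|^2$ plus a resonant contribution from $n_1 = \overline{n_2}$-type pairings. The renormalization constant $C_{\al,N}$ in \eqref{CN} is chosen precisely so that the resonant (diagonal) part $n_1 = -n_2$, i.e.\ the $n=0$ and near-$n=0$ interactions, contributes a finite nonzero limit; one should check that $C_{\al,N}^2 \sum_{|n_1|\le N} \jb{n_1}^{-4\al} \cdot(\text{bounded factor})$ converges, and that it matches $\E[|\widehat{Z}(t,n)|^2]$ computed from \eqref{lin2}, which is a Gaussian (first-chaos) object with covariance $\big|\int_0^t e^{(t-t')\vp(n)}\vp(n)\,dt'\big|^2 = |1-e^{t\vp(n)}|^2$ (since $\vp(D)\ze$ has the covariance of $\sum_n \vp(n) \gf_n e_n$). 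The off-diagonal terms must be shown to vanish in the limit, which is where the vanishing of $C_{\al,N}$ does its work.

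For the convergence in law at fixed time and fixed finite frequency set, I would apply the fourth moment theorem: it suffices to show that for each $n$, $\E[|\widehat{Z_N}(t,n)|^4] \to 3(\E[|\widehat{Z_N}(t,n)|^2])^2$ (the Gaussian fourth-moment relation), and that the limiting covariance structure across finitely many frequencies and times matches that of $Z$. The fourth-moment computation is a sum over pairings of four Gaussians; the ``Gaussian'' pairings give $3(\text{second moment})^2$ automatically, and one must show the remaining ``connected'' pairing, which is a quadruple sum $\sum C_{\al,N}^4 \jb{n_1}^{-\al}\cdots\jb{n_4}^{-\al} (\cdots)$ with one linear constraint, tends to $0$. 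By \eqref{CN}, $C_{\al,N}^4 \sim \jb{N}^{4\al-1}$ (or $(\log N)^{-1}$ at $\al=\tfrac14$), and the quadruple sum over the constrained region grows like $\jb{N}^{\max(0, 1-4\al)}$ up to logs; the product should then be $O(\jb{N}^{-\min(1,4\al)}\cdot(\text{log}))$ or $o(1)$ — verifying this power count carefully, together with the boundedness of the time kernels uniformly in $t$ on compact sets, is the technical heart. Together with a vector-valued version (Cramér--Wold plus the multidimensional fourth moment theorem) this gives convergence of all finite-dimensional distributions $\big(\widehat{Z_N}(t_1,n),\dots\big) \to \big(\widehat{Z}(t_1,n),\dots\big)$.

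To upgrade to convergence in law in $C(\R_+;W^{s,\infty}(\T))$ I would establish tightness via a Kolmogorov-type moment bound: for $p$ large, uniform-in-$N$ estimates of the form $\E\|Z_N(t)\|_{W^{s+\eps,\infty}}^p \le C(T)$ and $\E\|Z_N(t)-Z_N(t')\|_{W^{s,\infty}}^p \le C(T)|t-t'|^{\theta p}$ on $[0,T]$, using hypercontractivity on the second chaos to reduce $L^p$ moments to the $L^2$ computation already done, combined with the embedding $W^{s+\eps,2}\cap(\text{higher integrability})\hookrightarrow W^{s,\infty}$ à la Lemma~\ref{LEM:OOT} (the Wiener chaos / Besov argument). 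These bounds must be uniform in $N$, which again hinges on the $C_{\al,N}$ normalization keeping the second moments bounded. The smoothing $\vp(n)\sim 1/n$ in \eqref{phi2} buys the gain from the regularity of $\ze$ (a white noise, regularity $-\tfrac12-\eps$) up to $W^{s,\infty}$ for any $s<\tfrac12$, consistent with the statement. Tightness on each $[0,T]$ plus identification of finite-dimensional limits forces every subsequential limit to be the law of $Z$ on $C([0,T];W^{s,\infty})$, and since $T$ is arbitrary and the compact-open topology on $C(\R_+;\cdot)$ is the projective limit of the $C([0,T];\cdot)$ topologies, the full statement follows. The main obstacle I anticipate is the precise power counting in the fourth-moment (connected-pairing) sum, where one must carefully handle the interplay between the growth of the constrained lattice sum, the decay from $\jb{n_i}^{-\al}$ (which is very weak, even negative-exponent-like behavior is absent since $\al$ can be small or negative-adjacent), the four-fold factor $C_{\al,N}^4$, and the oscillatory time kernels — getting a genuinely vanishing bound uniformly on compact time intervals, rather than merely a bounded one, is the crux.
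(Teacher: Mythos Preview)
Your strategy matches the paper's exactly: tightness via second-moment bounds on $\ft{Z_N}(t,n)$ and $\dl_h\ft{Z_N}(t,n)$ plus hypercontractivity (packaged as Lemma~\ref{LEM:OOT}, yielding Proposition~\ref{PROP:tight}), identification of finite-dimensional limits by the fourth moment theorem (Proposition~\ref{PROP:uniq} via Lemma~\ref{LEM:uniq2}), and Prokhorov to conclude.

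Two corrections to your heuristics. First, the second-moment mechanism is not an $n_1=-n_2$ resonance: that pairing gives $n=0$, which is annihilated by $\vp(0)=0$. What actually happens is that for each fixed $n\ne 0$,
\[
C_{\al,N}^4\sum_{\substack{n_1+n_2=n\\|n_1|,|n_2|\le N}}\frac{1}{\jb{n_1}^{2\al}\jb{n_2}^{2\al}}\ \too\ 1,
\]
which is a nontrivial computation (Remark~\ref{REM:CA1}) and is precisely what makes the limiting covariance equal that of $Z$. Second, on your ``crux'': the paper uses criterion~(ii) of Lemma~\ref{LEM:FMT}, showing $\|\fn^t\otimes_1\fn^t\|_{L^2(\T^2)}\to 0$ for $\jb{Z_N(t),\psi}=I_2(\fn^t)$, which isolates the connected contribution cleanly. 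The point your power count misses is that this quantity carries $C_{\al,N}^{\,8}$ (two copies of $\fn^t$ in the contraction, squared for the norm), not $C_{\al,N}^4$; combined with the decay of $\ft\psi$, which forces $|n_j+m|\ll N^\ta$ in the sum \eqref{B4} and thus restricts the free summation, this gives a genuine $o(1)$ (see Cases~1--3 following \eqref{B4}). With those fixes your outline goes through.
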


As in \cite[Proposition 2.2]{Hairer}, 
a proof of Theorem \ref{THM:1}
is based on the 
 fourth moment theorem
(Lemma \ref{LEM:FMT}). 
More precisely, 
we first use the fourth moment theorem
to prove uniqueness of  the limit of 
the $\D'(\T)$-valued stochastic process $Z_N$
by identifying the limit of 
a finite-dimensional marginals
$\{ Z_N(t_j)\}_{j = 1}^m$;
see Subsection \ref{SUBSEC:Z1}.
We then establish 
tightness of the sequence $\{Z_N\}_{N \in \N}$
(Proposition \ref{PROP:tight})
to conclude Theorem~\ref{THM:1}.
See Section \ref{SEC:Z} for details.

By formally taking the limit
$(z_N, Z_N) \to (0, Z)$ in \eqref{BBM4}, 
we obtain the following limiting equation:
\begin{align}
\begin{cases}
\dt   v=  \vp(D) v +  \vp(D) 
\big( (v +  Z)^2 \big) \\
 v|_{t=0} =0.
 \end{cases}
\label{BBM4c}
\end{align}

\noi
We now rewrite  \eqref{BBM4c}
at the level of the original BBM formulation.
Namely, 
 by setting 
\begin{align*}
u = Z + v
\end{align*}

\noi
with \eqref{BBM4a}, we see that $u$ satisfies 
the following stochastic BBM equation forced by 
the derivative of
the spatial white noise
 $\ze$  in \eqref{I3b}:
\begin{align}
\begin{cases}
\dt u-\partial_{txx}u+\partial_{x}u
+\partial_{x}(u^{2})  = \dx \ze  \\
u|_{t = 0} = 0.
\end{cases}
\label{BBM4d}
\end{align}

\begin{proposition}\label{PROP:2}

The stochastic BBM equation
 \eqref{BBM4d} is almost surely globally well-posed.
More precisely, 
there exists a set $\Si \subset \O$
with $\PP(\Si) = 1$ such that,
given any $\o \in \O$, 
there exists a unique global-in-time
solution
$u = u^\o \in C(\R_+; H^{\frac 12- \eps}(\T))$
to \eqref{BBM4d}
of the form $u = Z + v$, 
where $Z
\in C(\R_+; W^{\frac 12 - \eps,\infty}(\T))$
 is as in \eqref{lin2}
 and Theorem \ref{THM:1}
and $v$ is the unique solution to~\eqref{BBM4c}
in the class
$C(\R_+; H^1(\T))$.

\end{proposition}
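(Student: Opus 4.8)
The plan is to reduce the statement to a deterministic fixed-point problem for $v$ solved pathwise, relying on the smoothing of $\vp(D)$. First I would record the pathwise regularity of the forcing term: by Theorem~\ref{THM:1} together with the standard computation used there (see Lemma~\ref{LEM:sto} and the discussion around \eqref{lin2}), there is a full-measure set $\Si \subset \O$ such that for every $\o \in \Si$ the Gaussian process $Z = Z^\o$ belongs to $C(\R_+; W^{\frac 12 - \eps, \infty}(\T))$ for every $\eps > 0$. (Note that $Z$ is a Gaussian process built directly from the white noise $\ze$, so its sample-path regularity follows from a Kolmogorov-type argument exactly as in the almost-sure convergence statements quoted in Remark~\ref{REM:var1}.) On this event we fix $\o$ and view \eqref{BBM4c} as a deterministic PDE with the given coefficient $Z$.

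Next I would set up the local theory for \eqref{BBM4c} in its Duhamel form
\begin{align*}
v(t) = 2\,\I\big(\vp(D)(v Z)\big)(t) + \I\big(\vp(D)(v^2)\big)(t) + \I\big(\vp(D)(Z^2)\big)(t).
\end{align*}
The point is that $\vp(D)$ gains one derivative (its symbol is $\vp(n) = -in/(1+n^2)$, so $\vp(D)\colon H^\sigma \to H^{\sigma+1}$), while each application of $S(t-t') = e^{(t-t')\vp(D)}$ under $\I$ is bounded on Sobolev spaces. Hence I would work in $X_T = C([0,T]; H^1(\T))$: the term $\I(\vp(D)(Z^2))$ lies in $C(\R_+; H^1)$ since $Z^2 \in C(\R_+; W^{-\eps,\infty}) \subset C(\R_+; H^{-\eps})$ and $\vp(D)$ maps this into $H^{1-\eps}$; the quadratic term $\I(\vp(D)(v^2))$ is controlled because $H^1(\T)$ is an algebra; and the mixed term $\I(\vp(D)(vZ))$ is handled by the product estimate (Lemma~\ref{LEM:BOZ}) with $v \in H^1$ and $Z \in W^{\frac12 - \eps,\infty}$, so $vZ \in H^{\frac12-\eps}$ and $\vp(D)(vZ) \in H^{\frac32-\eps} \hookrightarrow H^1$. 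A contraction-mapping argument on a ball of $X_T$, with $T = T(\|Z\|_{C([0,1];W^{\frac12-\eps,\infty})})$ small, then gives a unique local solution $v \in X_T$, and uniqueness in the full class $C([0,T];H^1)$ follows from the same estimates via a Gronwall argument on the difference of two solutions.

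For globalization I would exploit the BBM conservation laws, which persist for \eqref{BBM4d} in the same way they do for the deterministic equation \eqref{BBM0} after the substitution $u = Z + v$. The natural route, mirroring \cite{BT09, RO10} and the globalization used in \cite{Forlano, GKOT}, is to show that the $H^1$-norm of $v$ cannot blow up in finite time: differentiating $\|v(t)\|_{H^1}^2$ (or rather the relevant BBM energy, which is comparable to it because $\vp(D)$ carries the $(1-\dx^2)^{-1}$ weight) and using that $\vp(D)$ is skew-adjoint, the nonlinear self-interaction $\langle v, \vp(D)(v^2)\rangle$ contributes a term that is cubic in $\|v\|_{H^1}$ but can be absorbed using the antisymmetry/integration-by-parts structure of $\dx(u^2)$, leaving only contributions driven by $Z$ that are at worst of the form $C(\|Z\|_{W^{\frac12-\eps,\infty}})(1 + \|v\|_{H^1}^2)$; Gronwall then yields an a priori bound on $\|v(t)\|_{H^1}$ on any finite interval, depending only on $\sup_{[0,T]}\|Z\|_{W^{\frac12-\eps,\infty}}$, which is finite for $\o \in \Si$. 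Iterating the local theory with this bound gives the global solution $v \in C(\R_+; H^1)$, hence $u = Z + v \in C(\R_+; H^{\frac12-\eps}(\T))$, and setting $\Si$ to be the full-measure event where $Z$ has the stated regularity completes the proof.

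The main obstacle I anticipate is the a priori energy estimate: one must check carefully that, after substituting $u = Z + v$, the cubic term generated by the nonlinearity $\dx(u^2)$ genuinely cancels (the pure $v^3$ piece vanishes by skew-adjointness of $\vp(D)$ acting on $\langle v, v^2\rangle$ after integration by parts), and that the remaining terms involving $Z$ — which include genuinely rough factors like $\dx(Z^2)$ — can be paired against $v$ using only $Z \in W^{\frac12-\eps,\infty}$ and $v \in H^1$, with one derivative always landing favorably thanks to the $(1-\dx^2)^{-1}$ in $\vp(D)$. If the straightforward energy computation runs into a borderline term, a fallback is to prove global well-posedness by the iteration-and-bootstrap scheme of \cite{BT09, RO10} adapted to the forced equation, treating $Z$ as a fixed rough coefficient and exploiting that BBM propagates $H^1$-regularity on time intervals of length depending only on lower-order quantities.
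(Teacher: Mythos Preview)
Your proposal is correct and follows essentially the same approach as the paper: reduce to a deterministic perturbed BBM for $v$ using the almost-sure regularity $Z \in C(\R_+; W^{\frac12-\eps,\infty})$, prove local well-posedness in $C_T H^1$ by contraction using the one-derivative smoothing of $\vp(D)$, and globalize via the $H^1$-energy identity in which the cubic term $\int v^2 \dx v$ vanishes and the remaining $Z$-driven terms are controlled by Gronwall. The paper's execution is slightly simpler in places---for the cross term it uses $\|\vp(D)(vf)\|_{H^\s} \les \|vf\|_{L^2} \le \|v\|_{L^2}\|f\|_{L^\infty}$ rather than Lemma~\ref{LEM:BOZ}, and it writes the energy estimate explicitly as $\frac{d}{dt}\|v\|_{H^1}^2 = \int f\,\dx(v^2) + \int f^2\,\dx v$---but the structure and ideas are the same.
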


Proposition \ref{PROP:2} follows
from a straightforward adaptation of the argument
in \cite{BT09, Forlano}, 
using
 the almost sure regularity of $Z$
in Theorem \ref{THM:1}.
We present a proof of Proposition~\ref{PROP:2} in  Section \ref{SEC:limit}.

\medskip

Finally, we  state our main result.

\begin{theorem}
\label{THM:3}

Let $\al\leq \frac14$.
As $N \to \infty$, 
the solution $u_N$
to \eqref{BBM2}
with the renormalized initial data
$C_{\al, N} \P_{N} u_0$ 
converges in law
to the solution $u$ to \eqref{BBM4d}, 
constructed in Proposition~\ref{PROP:2}, 
in $C(\R_+; H^{- \frac 14 - \eps}(\T))$.

\end{theorem}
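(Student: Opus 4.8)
The plan is to upgrade the convergence \emph{in law} of the second Picard iterate supplied by Theorem~\ref{THM:1} to an \emph{almost sure} convergence via the Skorokhod representation theorem, and then to run a purely deterministic continuity argument for the remainder equation~\eqref{BBM4}.

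First I would record the joint convergence in law $(z_N, Z_N) \to (0, Z)$ in $C(\R_+; W^{s,\infty}(\T))^2$ for a fixed $s\in(\tfrac14,\tfrac12)$, with $C(\R_+;\,\cdot\,)$ carrying the compact-open topology in time. This follows from Theorem~\ref{THM:1} together with $z_N = C_{\al,N}\P_N\zf \to 0$ in probability (Lemma~\ref{LEM:sto} and~\eqref{CN1}), since one of the two marginal limits is deterministic. Applying the Skorokhod representation theorem on the Polish space $C(\R_+; W^{s,\infty}(\T))^2$, I obtain a new probability space carrying $(\wt z_N,\wt Z_N)$ with $(\wt z_N,\wt Z_N)\stackrel{d}{=}(z_N,Z_N)$ for every $N$ and $(\wt z_N,\wt Z_N)\to(0,\wt Z)$ almost surely, where $\wt Z\stackrel{d}{=}Z$. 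On this space let $\wt v_N$ be the unique solution of~\eqref{BBM4} with $(z_N,Z_N)$ replaced by $(\wt z_N,\wt Z_N)$, and set $\wt u_N := \wt z_N + \wt Z_N + \wt v_N$. Since $u_N$ is recovered from $(z_N,Z_N)$ by the measurable deterministic map $(z,Z)\mapsto z+Z+\Phi(z,Z)$, where $\Phi(z,Z)$ denotes the solution of~\eqref{BBM4} with data $(z,Z)$, we have $\wt u_N\stackrel{d}{=}u_N$; likewise $\wt u := \wt Z+\Phi(0,\wt Z)\stackrel{d}{=}u$, the solution from Proposition~\ref{PROP:2} (here $\Phi(0,\wt Z)$ solves~\eqref{BBM4c}). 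It thus suffices to prove $\wt u_N\to\wt u$ almost surely in $C(\R_+;H^{-\frac14-\eps}(\T))$; since $\wt z_N\to 0$ in $C_{\mathrm{loc}}(\R_+;H^{-\frac14-\eps}(\T))$ and $\wt Z_N\to\wt Z$ in $C_{\mathrm{loc}}(\R_+;W^{s,\infty}(\T))\hookrightarrow C_{\mathrm{loc}}(\R_+;H^{-\frac14-\eps}(\T))$, this reduces to showing $\wt v_N\to\Phi(0,\wt Z)$ in $C_{\mathrm{loc}}(\R_+;H^{\frac34-\eps}(\T))$.

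The remaining step is deterministic: the solution map $(z,Z)\mapsto\Phi(z,Z)$ of~\eqref{BBM4} is continuous at $(0,Z)$, uniformly up to any fixed time. I would first run a contraction argument for~\eqref{BBM4} in $C([0,T_0];H^{\frac34-\eps}(\T))$, with $T_0$ depending only on $\|z_N\|_{C([0,T];H^{-\frac14-\eps})}$ and $\|Z_N\|_{C([0,T];W^{s,\infty})}$: exploiting the one-degree smoothing $\vp(D)=-(1-\dx^2)^{-1}\dx$, the term $\vp(D)(v_N^2)$ gains a derivative off the $H^{\frac34-\eps}$-algebra, the term $\vp(D)(2v_NZ_N+Z_N^2)$ is controlled using $Z_N\in W^{s,\infty}$ via the product estimate (Lemma~\ref{LEM:BOZ}), and the dangerous terms $\vp(D)(2v_Nz_N+2z_NZ_N)$ are controlled by a fractional Leibniz estimate in $H^{\frac34-\eps}\cdot H^{-\frac14-\eps}\hookrightarrow H^{-\frac14-\eps}$ and $W^{s,\infty}\cdot H^{-\frac14-\eps}\hookrightarrow H^{-\frac14-\eps}$ (needing $s>\tfrac14$) --- crucially these last terms carry the factor $\|z_N\|_{C([0,T];H^{-\frac14-\eps})}\to 0$, so that in the limit one recovers precisely~\eqref{BBM4c}. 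The passage from $[0,T_0]$ to an arbitrary $[0,T]$ is then obtained by adapting the globalization argument of~\cite{BT09,Forlano} already used for Proposition~\ref{PROP:2}, using that after Skorokhod $\{\wt z_N\}_N$ and $\{\wt Z_N\}_N$ are almost surely bounded on every $[0,T]$, which yields an a priori bound on $\|\wt v_N\|_{C([0,T];H^{\frac34-\eps})}$ uniform in $N$. Combining this uniform global bound with the short-time continuity and iterating over consecutive intervals gives $\wt v_N\to\Phi(0,\wt Z)$ in $C([0,T];H^{\frac34-\eps}(\T))$ for every $T$, hence $\wt u_N\to\wt u$ almost surely in $C(\R_+;H^{-\frac14-\eps}(\T))$, and therefore $u_N\to u$ in law by the matched laws.

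The main obstacle is this last, deterministic step: propagating the local-in-time well-posedness of~\eqref{BBM4} to a \emph{uniform-in-$N$} global-in-time theory, for a forcing $(z_N,Z_N)$ sitting at the borderline regularity $H^{-\frac14-\eps}\times W^{\frac12-\eps,\infty}$. The distributional contribution $z_N$ forces the whole analysis to be carried out at the limited regularity $H^{\frac34-\eps}$ --- below the $H^1$ energy level of BBM --- so the globalization cannot rest on the conserved $H^1$-norm and must instead be adapted carefully from the BBM well-posedness theory of~\cite{BT09,Forlano}, with every bound tracked explicitly in the norms of $z_N$ and $Z_N$ in order to remain uniform in $N$. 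The Skorokhod reduction and the identification of the limit are, by comparison, routine.
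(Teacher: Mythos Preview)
Your overall architecture --- Skorokhod representation followed by a deterministic continuity argument for the remainder equation --- is exactly the paper's, and your local-in-time product estimates are essentially those used in Section~\ref{SEC:conv}. The gap is in the globalization step, which you correctly flag as the main obstacle but whose proposed resolution does not work as written.

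You propose to obtain a \emph{uniform-in-$N$} a priori bound on $\|\wt v_N\|_{C_T H^{3/4-\eps}}$ by ``adapting the globalization argument of \cite{BT09,Forlano}''. But the $H^1$ energy identity of Subsection~\ref{SUBSEC:GWP} requires the forcing to lie at least in $L^2_x$: the term $\int \wt z_N\,\dx(\wt v_N^2)\,dx$ cannot be closed in terms of $\|\wt v_N\|_{H^1}$ when $\wt z_N$ is only uniformly bounded in $W^{-1/4-\eps,\infty}$ (you would need $\wt v_N^2\in H^{5/4+\eps}$, which an $H^1$ bound does not provide). There is no obvious substitute energy at the $H^{3/4-\eps}$ level, and the globalization in \cite{Forlano} for $\al=\tfrac12$ is probabilistic (following \cite{GKOT}) rather than a deterministic a priori bound. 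So the uniform global control on $\wt v_N$ you invoke is not available by the route you describe.

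The paper never bounds $\wt v_N$ globally. Instead it uses that the \emph{limit} equation~\eqref{BBM4c} involves only $\wt Z\in W^{1/2-\eps,\infty}\subset L^\infty$, so the $H^1$ energy method of Subsection~\ref{SUBSEC:GWP} \emph{does} close and yields a global bound $\|\wt v\|_{C_T H^1}\le C_\o(T)$ (this is exactly Proposition~\ref{PROP:WP1}). One then estimates the difference $\wt v_N-\wt v$ directly on short intervals $I_j$ of length $\tau=\tau(C_\o(T))$: writing $\|\wt v_N+\wt v\|\le 2C_\o(T)+\|\wt v_N-\wt v\|$, the quadratic nonlinearity contributes $\tau\big(C_\o(T)+\|\wt v_N-\wt v\|_{C_{I_j}H^{3/4-\eps}}\big)\|\wt v_N-\wt v\|_{C_{I_j}H^{3/4-\eps}}$, absorbable by a continuity argument as long as $\|\wt v_N-\wt v\|\le 1$; every cross term carrying $\wt z_N$ is multiplied by $\|\wt z_N\|_{C_T W^{-1/4-\eps,\infty}}\to 0$ and is therefore small for $N\ge N_\o(T,\dl)$. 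Iterating over $\sim T/\tau$ intervals with $\tau$ fixed gives $\wt v_N\to\wt v$ in $C_T H^{3/4-\eps}$.

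One minor slip: $z_N$ does not converge in $W^{s,\infty}$ for $s\in(\tfrac14,\tfrac12)$; by Lemma~\ref{LEM:sto}\,(i) it converges only in $W^{s_1,\infty}$ for $s_1<-\tfrac14$. The joint convergence should be posed in $C(\R_+;W^{s_1,\infty})\times C(\R_+;W^{s_2,\infty})$ with $s_1<-\tfrac14$ and $s_2<\tfrac12$, as in Lemma~\ref{LEM:sto}\,(ii).
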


\medskip

Theorem \ref{THM:3} presents the first probabilistic 
construction of solutions
 beyond variance blowup for dispersive equations
with random initial data;
see also Theorem~\ref{THM:3a} below.
In particular, we prove that the effect of the random initial data
turns into a stochastic forcing in the limiting equation~\eqref{BBM4c}, 
which is an interesting new phenomenon to observe.
Moreover, our approach 
holds for {\it any} $\al \le \frac 14$, even beyond
the threshold $\al_\text{crit} = 0$
given by the probabilistic scaling heuristics.
We also establish an analogous result
for the stochastic BBM forced by a fractional derivative of a space-time white noise
(see \eqref{BBM6} below).
See Remark~\ref{REM:SBBM1}\,(ii)
and Appendix~\ref{SEC:A}.

Once we establish convergence of 
the second Picard iterate $Z_N$ in \eqref{I3} to $Z$ in \eqref{lin2}  (Theorem \ref{THM:1}), Theorem~\ref{THM:3}
follows without much difficulty.
We first apply
the Skorokhod representation theorem
(Lemma~\ref{LEM:Sk})
to upgrade the convergence in law
of $(z_N, Z_N)$ to $(0, Z)$ 
to almost sure convergence 
of the associated stochastic terms (Lemma~\ref{LEM:sto}).
The rest follows from a standard PDE argument.
See Section \ref{SEC:conv} for a proof.

\medskip

Theorem \ref{THM:3} is in the spirit of Hairer's formulation  \cite{Hairer}, 
where we placed the vanishing renormalization constant $C_{\al, N}$
on the random initial data in 
\eqref{BBM2}
(instead of placing it on the noise 
as in \eqref{KPZ2}).
Let us  now consider a slightly different formulation
in the spirit of the work \cite{GT25} (see \eqref{KPZ3}), 
where we  place a vanishing renormalization constant on the nonlinearity.
More precisely, 
for $\al \le \frac 14$, we consider the following 
{\it weakly interacting} BBM:
\begin{align}
\begin{cases}
\dt \uu_N-\partial_{txx}\uu_N+\partial_{x}\uu_N
+C_{\al, N}^2 \partial_{x}(\uu_N^{2})  = 0  \\
\uu_N|_{t=0} =  \P_{N} u_0, 
\end{cases}
\label{BBM8}
\end{align}

\noi
where $C_{\al, N}$ and $u_0 = u_0^\o(\al)$ are
as in \eqref{CN} and \eqref{data1}, respectively.
The weakly coupling constant $C_{\al, N}^2$
in \eqref{BBM8}
is chosen to balance
with 
the strong bilinear interaction (fast oscillation)
of the low regularity 
random linear solution 
so that we obtain a finite non-trivial limit
(namely, the forcing $\dx \ze$ in \eqref{BBM9}).
In fact, it is easy to see that the second Picard  iterate
for \eqref{BBM8}
is once again given by $Z_N$ in \eqref{I3}
which converges in law to~$Z$ in \eqref{BBM4a}
as seen in Theorem \ref{THM:1}.
Then, by considering the first order expansion 
and formally taking a limit as $N \to \infty$, 
we obtain the following limiting equation:
\begin{align}
\begin{cases}
\dt \uu-\partial_{txx}\uu+\partial_{x}\uu
  =\dx \ze  \\
\uu|_{t = 0} = u_0, 
\end{cases}
\label{BBM9}
\end{align}

\noi
where $\ze$ is the spatial white noise in \eqref{I3b},
assumed to be
independent of $u_0$ in \eqref{data1}.
We note that the limiting equation is now linear but there are now
two (independent) sources
of randomness.  Compare this with \eqref{BBM4d}, 
which is nonlinear but with one source of randomness.

\begin{theorem}\label{THM:3a}

Let $\al\leq \frac14$.
As $N \to \infty$, 
the solution $\uu_N$
to the renormalized BBM~\eqref{BBM8}
with the truncated random initial data
$\P_N u_0$, where $u_0$ is as in  \eqref{data1}, 
converges in law
to the solution $\uu$ to~\eqref{BBM9}
in $C(\R_+; H^{\al - \frac 12 - \eps}(\T))$.

\end{theorem}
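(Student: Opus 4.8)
The plan is to run the same second‑order expansion machinery as for Theorem~\ref{THM:3}, but with the roles of the linear solution and the nonlinear remainder adjusted to the weakly interacting scaling in~\eqref{BBM8}. Write $\uu_N = \P_N\zf + \ww_N$, where $\P_N\zf$ is the (untruncated‑coefficient) random linear solution with initial data $\P_N u_0$ as in~\eqref{lin0}--\eqref{data1}, and $\ww_N = \uu_N - \P_N\zf$ solves
\begin{align*}
\begin{cases}
\dt \ww_N = \vp(D)\ww_N + C_{\al, N}^2\, \vp(D)\big((\ww_N + \P_N\zf)^2\big)\\
\ww_N|_{t=0} = 0.
\end{cases}
\end{align*}
Expanding the square, the leading contribution to $\ww_N$ is the second Picard iterate, and here is the key point: $C_{\al, N}^2\,\I\big(\vp(D)((\P_N\zf)^2)\big)$ is exactly $Z_N$ in~\eqref{I3}, since $z_N = C_{\al, N}\P_N\zf$ and the nonlinearity is quadratic, so the two factors of $C_{\al, N}$ from $z_N^2$ match the explicit $C_{\al, N}^2$ coupling. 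Thus by Theorem~\ref{THM:1}, this term converges in law to $Z$ in $C(\R_+; W^{s,\infty}(\T))$ for any $s < \frac12$. Writing $\ww_N = Z_N + \vv_N$, the remainder $\vv_N$ solves a perturbed equation in which every remaining nonlinear term carries either a factor $C_{\al, N}^2 \to 0$ (the $\ww_N^2$ and $\ww_N\,\P_N\zf$ cross terms, after reorganizing) or is absorbed into $Z_N$; one must be a little careful bookkeeping the $C_{\al, N}^2(\P_N\zf)\vv_N$ and $C_{\al, N}^2\vv_N^2$ terms, but since $C_{\al, N}\P_N\zf = z_N \to 0$ in $C(\R_+; W^{\al-\frac12-\eps,\infty}(\T))$ (Lemma~\ref{LEM:sto}) and $C_{\al, N}^2 \to 0$, all of these are lower order.

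First I would set up the joint convergence. Apply the Skorokhod representation theorem (Lemma~\ref{LEM:Sk}) to the triple $(C_{\al, N}\P_N\zf,\ Z_N,\ \P_N u_0)$: the first two components converge in law to $(0, Z)$ by Lemma~\ref{LEM:sto} and Theorem~\ref{THM:1}, and $\P_N u_0 \to u_0$ (almost surely, and in law in $H^{\al-\frac12-\eps}(\T)$). Here one needs the limit $Z$ to be \emph{independent} of $u_0$, matching the hypothesis in~\eqref{BBM9}: since $Z_N$ is built from $\P_N\zf$, which has the \emph{same} randomness as $\P_N u_0$, the independence in the limit is not automatic and must be argued. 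I expect this to follow from the fourth moment theorem input in the proof of Theorem~\ref{THM:1}: the limiting Gaussian process $Z$ arises from the \emph{off‑diagonal} (genuinely quadratic, mean‑zero) part of $z_N^2$, whose covariance against the linear data $\P_N u_0$ involves third moments of Gaussians and hence vanishes, so $Z$ and $u_0$ decorrelate, and being jointly Gaussian they are independent in the limit. After Skorokhod, on the new probability space we have almost sure convergence of all three terms in the stated topologies.

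Next I would close the PDE estimate for $\vv_N$. On the Skorokhod space, $\vv_N$ solves a Duhamel equation of the schematic form
\begin{align*}
\vv_N(t) = C_{\al, N}^2\,\I\big(\vp(D)(\vv_N^2)\big)(t)
+ 2\I\big(\vp(D)(\vv_N(z_N + Z_N))\big)(t)
+ 2 C_{\al, N}^2\,\I\big(\vp(D)(Z_N \P_N\zf)\big)(t) + C_{\al, N}^2\,\I\big(\vp(D)(Z_N^2)\big)(t),
\end{align*}
where the last two (fully explicit) terms tend to $0$ because $C_{\al, N}^2 \to 0$ while $Z_N$ and $z_N = C_{\al,N}\P_N\zf$ stay bounded in $C(\R_+; W^{s,\infty})$. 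Using the smoothing of $\vp(D)$ (gaining one derivative) together with the product estimate of Lemma~\ref{LEM:BOZ}, exactly as in the proofs of Proposition~\ref{PROP:2} and Theorem~\ref{THM:3} (with $Z$ there playing the role of $z_N+Z_N$ here), one gets uniform local-in-time bounds for $\vv_N$ in $C([0,T]; H^1(\T))$ and convergence $\vv_N \to 0$ as $N\to\infty$, since \emph{every} forcing term either vanishes or is paired with $\vv_N$ itself. Then $\uu_N = \P_N\zf + Z_N + \vv_N \to u_0^{\text{lin}} + Z$, where $u_0^{\text{lin}}(t) := S(t) u_0$; but $S(t) u_0 + Z$ is precisely the mild solution $\uu$ of the linear equation~\eqref{BBM9}. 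Globalization is immediate since~\eqref{BBM9} is linear and $Z \in C(\R_+; W^{\frac12-\eps,\infty})$, $u_0 \in H^{\al-\frac12-\eps}$, so $\uu \in C(\R_+; H^{\al-\frac12-\eps}(\T))$; the convergence topology $C(\R_+; H^{\al-\frac12-\eps}(\T))$ is dictated by the roughest term, namely $S(t)u_0$, while $Z$ and $Z_N$ live in a smoother space. Finally, since the limit $\uu$ has a law not depending on the Skorokhod realization, convergence in law on the original space follows.

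The main obstacle I anticipate is \emph{not} the PDE estimate — that is a routine variant of what is already done for Theorem~\ref{THM:3} — but rather establishing the joint convergence of $(Z_N, \P_N u_0)$ together with the \emph{independence} of the limits $Z$ and $u_0$. This requires revisiting the fourth‑moment / finite‑dimensional‑marginal computation behind Theorem~\ref{THM:1} and checking that the relevant mixed cumulants of $(Z_N(t_j), \langle \P_N u_0, \psi\rangle)$ vanish in the limit, so that the limiting joint law is a product Gaussian. Once that is in hand, everything else is bookkeeping of vanishing $C_{\al, N}$ factors.
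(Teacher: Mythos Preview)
Your proposal is correct and follows essentially the same route as the paper: the key step you identify as the main obstacle---joint convergence of $(\P_N u_0, Z_N)$ to an \emph{independent} pair $(u_0, Z)$---is handled in Remark~\ref{REM:indep} exactly as you outline, via orthogonality of $\H_1$ and $\H_2$ together with the fourth moment theorem applied to the enlarged vector, and the remaining PDE argument is the straightforward modification of the proof of Theorem~\ref{THM:3} that you describe. One minor bookkeeping slip: in your schematic Duhamel formula the cross terms $\vv_N Z_N$ and $\vv_N z_N$ should carry coefficients $C_{\al,N}^2$ and $C_{\al,N}$ respectively (not $1$), but this only makes them vanish faster, so the conclusion is unaffected.
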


We first note that the limiting equation \eqref{BBM9}
is linear and thus is globally well-posed.
Moreover, a slight modification of the proof 
 of Theorem \ref{THM:1}
 shows that $(\P_N u_0, Z_N)$
 converges in law
 to the jointly Gaussian process\footnote{Here, only the second argument $Z$
 is time dependent.}
 $(u_0, Z)$
 in
 $W^{\al - \frac 12 - \eps, \infty}(\T)
 \times C(\R_+; W^{\frac 12 - \eps, \infty}(\T))$,
which in particular implies
 independence of  $u_0$ and $\ze$;
see Remark~\ref{REM:indep}.
Then, 
Theorem \ref{THM:3a}
follows from 
the first order expansion and 
a straightforward modification 
of the proof of Theorem~\ref{THM:3},
and 
thus we omit details.
In Appendix~\ref{SEC:A}, 
we also establish 
an analogous result (Theorem~\ref{THM:6})
for the stochastic BBM forced by a fractional derivative of a space-time white noise.

\medskip

Renormalization is
a way to tame infinity arising in an equation
due to a rough random source
and it may often appear in the form of subtracting a diverging constant
or 
recentering by projection onto a homogeneous Wiener chaos of a fixed
degree.
Renormalization can also appear in the form
of a vanishing (or diverging) multiplicative constant.
See, for example,~\cite{ORW}, 
where renormalization is given by the multiplication
by a vanishing constant.
See also \cite{HS, ORSW1, ORSW2, Zine}.
The divergence coming from  the variance blowup
for BBM can not be tamed by subtraction or recentering, 
and we indeed need 
renormalization  by the multiplication
of a vanishing constant, 
either 
on the random initial data
as in \eqref{BBM2}
or 
on the nonlinearity
as in~\eqref{BBM8}.

As mentioned above, 
Theorem \ref{THM:3} on the convergence of \eqref{BBM2}
to \eqref{BBM4d}
is a direct analogue of Hairer's work \cite{Hairer}
in the random initial data context
whose proof is slightly more involved than that of Theorem \ref{THM:3a}.
However, 
from the viewpoint of probabilistic well-posedness
of dispersive PDEs with low regularity random initial data, it may be more natural
to consider 
the  weakly interacting BBM \eqref{BBM8}
and view Theorem \ref{THM:3a}
as an extension
of probabilistic well-posedness
of BBM beyond variance blowup, 
since the limiting equation \eqref{BBM9} still retains 
the original 
{\it rough} random initial data $u_0 = u_0^\o(\al)$
of regularity $\al - \frac 12 - \eps$,
while the renormalized random  initial data 
$C_{\al, N} \P_{N} u_0$
in \eqref{BBM2}
is almost surely bounded in $W^{-\frac 14 - \eps, \infty}(\T)$;
see Lemma~\ref{LEM:sto}.
In particular, Theorem \ref{THM:3a}
presents an extension
of probabilistic well-posedness of BBM
beyond variance blowup
for the Gaussian random initial data  of arbitrarily low regularity.

\medskip

We conclude this introduction by stating several remarks.

\begin{remark}\rm

In this paper, we use the Dirichlet frequency projector
$\P_N$
for regularization.
We can also use mollification for regularization,
in which case the renormalization constant $C_{\al, N}$ in~\eqref{CN}
needs to be modified accordingly, 
depending on a mollification kernel;
see \cite[Remark~1.14]{GKO2}.

\end{remark}

\begin{remark}\rm

We note that 
two renormalized formulations
\eqref{BBM2}
and \eqref{BBM8} are {\it not} equivalent.
Let $u_N$ be the solution to \eqref{BBM2}
with the renormalized initial data
$C_{\al, N} \P_{N} u_0$ as in Theorem \ref{THM:3}.
Then, let 
$\us_N = C_{\al, N}^{-1} u_N$
such that $\us_N|_{t = 0} = \P_N u_0$.
Then, $\us_N$
satisfies the following equation:
\begin{align}
\begin{cases}
\dt \us_N-\partial_{txx}\us_N+\partial_{x}\us_N
+C_{\al, N} \partial_{x}(\us_N^{2})  = 0  \\
\us_N|_{t=0} =  \P_{N} u_0.
\end{cases}
\label{BBM8a}
\end{align}

\noi
Note that we have $C_{\al, N}$ on the nonlinearity
in \eqref{BBM8a}
(instead of $C_{\al, N}^2$ in \eqref{BBM8}).
 In particular, the solution $\us_N$ to \eqref{BBM8a}
 does not converge to any limit.

\end{remark}

\begin{remark}\label{REM:IV}\rm
 Given a deterministic function $v_0 \in H^1(\T)$, 
consider
\begin{align}
\begin{cases}
\dt u_N-\partial_{txx}u_N+\partial_{x}u_N
+\partial_{x}(u_N^{2})  = 0  \\
u_N|_{t=0} = v_0 + C_{\al, N} \P_{N} u_0, 
\end{cases}
\label{BBMx2}
\end{align}

\noi
where $C_{\al, N}$ and $u_0 = u_0^\o(\al)$ are
as in \eqref{CN} and \eqref{data1}, respectively.
Then, a straightforward modification of 
the proof of Theorem \ref{THM:3}
shows that the solutions $u_N$ to \eqref{BBMx2}
converge in law to the solution $u$ to the 
following stochastic BBM
forced by 
the derivative of
a spatial white noise:
\begin{align*}
\begin{cases}
\dt u-\partial_{txx}u+\partial_{x}u
+\partial_{x}(u^{2})  = \dx \ze  \\
u|_{t = 0} = v_0, 
\end{cases}
\end{align*}

\noi
where 
 $\ze$ is as  in \eqref{I3b}.
A similar comment applies to 
the weakly interacting BBM~\eqref{BBM8}
considered in 
Theorem \ref{THM:3a}.

\end{remark}

\begin{remark}\label{REM:SBBM1}\rm

\noi
(i) 
Given $ \al \le \frac 14$, 
consider 
\begin{align}
\begin{cases}
\dt u_N-\partial_{txx}u_N+\partial_{x}u_N
+\partial_{x}(u_N^{2})  = \ze_{\al, N}  \\
u_N|_{t = 0} = 0, 
\label{BBM5}
\end{cases}
\end{align}

\noi
where $\ze_{\al, N} = - \dx (z_N^2)$
with $z_n$ as in \eqref{lin1}.
Then, 
a slight modification of the proof of Theorem \ref{THM:3} shows that 
solutions $u_N$ to 
\eqref{BBM5} converge in law to the solution $u$
to \eqref{BBM4d}.

\smallskip

\noi
(ii) 
Given $\al \in \R$, 
consider the following stochastic BBM:
\begin{align}
\dt u-\partial_{txx}u+\partial_{x}u
+\partial_{x}(u^{2})  = \jb{\dx}^{\al} \dx \xi,   
\label{BBM6}
\end{align}

\noi
where $\xi$ denotes the space-time white noise on $\R_+\times \T$.
It follows from a slight modification of the argument in \cite{Forlano}
that 
\eqref{BBM6} is locally well-posed for $\al < \frac 34$.
Moreover, variance blowup occurs
for $\al \ge \frac 34$.

Let  $\al \ge \frac 34$.
By following Hairer's work \cite{Hairer}, 
we propose to consider the following renormalized stochastic BBM:
\begin{align}
\dt u_N-\partial_{txx}u_N+\partial_{x}u_N
+\partial_{x}(u_N^{2})  = C_{1-\al, N}  \P_N \jb{\dx}^\al  \dx \xi,   
\label{BBM7}
\end{align}

\noi
where 
$C_{1-\al, N}$ is as in \eqref{CN}.
Then, by a straightforward modification 
of the proof of Theorem~\ref{THM:3}, 
we can  show that 
the solutions $u_N$ to \eqref{BBM7}
(with an appropriate assumption on deterministic initial data)
converge in law to the solution $u$ to 
the stochastic BBM forced by 
a space-time noise
$\upze$ whose time marginal 
$\upze(t)$
is given by  a scaled spatial white noise 
for each $t \in \R_+$;
see Theorem \ref{THM:4a}.
See Appendix \ref{SEC:A}
for a further discussion, 
where we also discuss renormalization
by taming the nonlinearity as \eqref{BBM8},
leading to an analogue of Theorem~\ref{THM:3a};
see Theorem \ref{THM:6}.

\end{remark}

\begin{remark}\label{REM:DH} \rm

In Theorem \ref{THM:3a}, 
we considered the weakly interacting BBM \eqref{BBM8}
and obtained the non-trivial limiting dynamics  \eqref{BBM9};
while the equation \eqref{BBM9} is linear, 
the stochastic forcing $\dx \ze$
on the right-hand side captures the limiting behavior of the nonlinear
interaction described by the second Picard  iterate $Z_N $ in \eqref{I3}.
In the following, we provide two examples
of the study of weakly nonlinear dynamics with random initial data
where one may find some similarity. 

\medskip

In a series of breakthrough works \cite{DH1, DH2, DH3, DH5}, 
Deng and Hani studied
the rigorous derivation of the wave kinetic equation, 
where one studies
the limiting behavior of weakly interacting nonlinear dynamics
of strength $\epsilon$ 
on the dilated torus of size $L$
as $\epsilon \to 0$ and $L \to \infty$.
In this context, the limiting dynamics is described by 
the resonant dynamics.
We refer interested readers to the survey papers \cite{Deng, DH4}.
See also the works by Collot and Germain.

\medskip

In \cite{GOTT}, 
Greco, Tao, Tolomeo, and the third author 
considered the construction of the Gibbs measure 
for the focusing cubic  NLS on $\T^2$:
\begin{align}
i \dt u + (1- \Dl)   u -  |u|^{2} u
= 0.
\label{NLS1}
\end{align}

\noi
It is known that the Gibbs measure for \eqref{NLS1}
does not exist (see \cite{BS, OST})
and thus the authors in \cite{GOTT} investigated the problem
in the weakly interacting regime:
\begin{align}
i \dt u_N + (1 - \Dl) u_N -  C_N \P_N(|\P_N u_N|^{2} \P_N u_N)
= 0, 
\label{NLS2}
\end{align}

\noi
where $C_N > 0$ tends to $0$ as $N \to \infty$.
Let $\rho_N$ be the frequency-truncated Gibbs measure 
for~\eqref{NLS2}
with a renormalized $L^2$-cutoff:
\begin{align}
d\rho_N(u)=Z_{N}^{-1}
\ind_{\{|\int_{\T^2} \, : |\P_N u|^2: \, dx| \, \leq K_N\}}  
e^{ C_N \int_{\T^2} \, : |\P_N u|^4: \,dx}d\mu(u),
\label{NLS3}
\end{align}

\noi
where $\mu$ denotes the (massive) Gaussian free field on $\T^2$.
Here, $K_N > 0$ denotes 
 the cutoff size which, for simplicity,  we assume to diverge as $N \to \infty$ in the following. 
Here, $: \! |\P_N u|^{2k}\!:$, $k = 1, 2$, 
denote the Wick renormalized powers; see \cite{GOTT}
for a precise definition.
Then, at the dynamical level, 
it was shown in \cite{GOTT}
that there exists $C_* > 0$
such that 
if 
$C_N \le C_* (K_N + \log N)^{-1}$
for any $N \in \N$, 
then
the solution $u_N$ to
\eqref{NLS2} 
converges to the solution $u$ to the linear Schr\"odinger equation
$i\dt u + (1-\Dl)u=0$
with the initial data distributed by 
the  Gaussian free field $\mu$ on $\T^2$
(which is an invariant measure for the linear dynamics).
See \cite[Remark 1.2]{GOTT}
for a further discussion.

\end{remark}

\begin{remark} \label{REM:NLW} \rm
Consider the quadratic nonlinear wave equation (NLW)
on $\T^2$:\footnote{In this formal discussion, we ignore the issue of renormalization.}
\begin{align}
\begin{cases}
\dt^2 u + (1- \Dl) u = u^2\\
(u, \dt u)|_{t = 0} = (u_0, u_1), 
\end{cases}
\label{NLW1}
\end{align}

\noi
where   the random initial data 
$(u_0, u_1) = (u_0^\o(\al), u_1^\o(\al)) $
is given by\footnote{Here, we use a different convention on $\al$ than that in \eqref{data1}
to match the notation in \cite{OO}.} 
\begin{align*}
(u_0, u_1)   =
\bigg( \sum_{n\in \Z^2} \frac{g_n(\o) }{\jb{n}^{1-\al}} e_n,
\sum_{n\in \Z^2} \jb{n}^\al h_n(\o)  e_n \bigg).
\end{align*}

\noi
Here, 
$\{g_n, h_n \}_{n\in \Z^2}$ is a family  of independent 
complex-valued standard Gaussian random variables
conditioned that $g_{-n} = \overline{g_n}$
and $h_{-n} = \overline{h_n}$, $n \in \Z$.
Then, a straightforward modification of the 
argument in \cite{OO}
shows that 
variance blowup on the second Picard iterate
occurs
for  $\al \ge \frac 12$.
We will address the issue of renormalization 
beyond variance blowup
for \eqref{NLW1}
in a forthcoming work \cite{LLLOT}, 
where we crucially exploit multilinear dispersive properties.

\end{remark}

%
%
%
%
%
%
%
%
%

\section{Preliminaries}
\label{SEC:2}

\subsection{Notations}

Let $A\les B$ denote an estimate of the form $A\leq CB$ for some constant $C>0$. We write $A\sim B$ if $A\les B$ and $B\les A$, while $A\ll B$ denotes $A\leq c B$ for some small constant $c> 0$. 
We may write  $\les_{\be}$ to 
emphasize the dependence on an external parameter $\be$.
We use $C>0$ to denote various constants, which may vary line by line.
We may also write $C_\be$ to emphasize 
the dependence on a parameter $\be$.

We set $\Z^* = \Z\setminus\{0\}$.
We also  set
$a \wedge b = \min(a, b)$.

In this paper, all the function spaces
are restricted to real-valued functions.
Given $s \in \R$, 
we define the $L^2$-based Sobolev space $H^{s}(\T)$  
via the norm: 
\begin{align*}
\|  f  \|_{H^{s}}
 =
\|\jb{n}^{s} \ft f( n)\|_{\l^2_n}.
\end{align*}

\noi
Given $s \in \R$ and $1\le p\le \infty$,
we define 
 the $L^p$-based Sobolev space $W^{s,p}(\T)$
as the completion of $\D(\T) = C^\infty(\T)$
under 
 the norm:
\begin{align*}
\| f\|_{W^{s,p}}
= \|\jb{\nb}^s f\|_{L^p}
= \|\F^{-1} (\jb{n}^s \ft f (n))\|_{L^p}
\end{align*}

\noi
We note that $H^s(\T) = W^{s,2}(\T)$.
When $p = \infty$, our definition 
provides a smaller space than the usual $W^{s, \infty}(\T)$, 
but it ensures that our version of 
$W^{s, \infty}(\T)$ is separable; see Remark \ref{REM:Sk2}.
We note that such a convention is common in 
(stochastic) parabolic PDEs; see, for example, 
\cite{MW1, COW}. 
We endow 
$\D(\T) = C^\infty(\T)$
 with the Fr\'echet space topology generated by a countable
family of semi-norms
$\big\{\|\,\cdot \,\|_{W^{j, \infty}}\big\}_{j \in \N\cup\{0\}}$.
We use $\D'(\T)$ to denote the space of distributions on~$\T$.

In dealing with space-time functions, 
we use  short-hand notations such as  
$C_TH^s_x$ = $C([0, T]; H^s(\T))$
and  
$C_I H^s_x = C(I; H^s(\T))$, where $I\subset \R_+$ is an interval.

Given $0 < \be < 1$, we denote by $C^\be_T B = C^\be([0, T]; B)$ the space of $\be$-H\"older continuous functions taking values in a Banach space $B$, endowed  with the seminorm:
\begin{align}
\|f\|_{C^\be_T B} = \sup_{0 \le r < t \le T}
\frac{\|u(t) - u(r)\|_B}{|t-r|^\be}.
\label{HO1}
\end{align}

\noi
We also define the Lipschitz space
 $\C^\be_T B = \C^\be([0, T]; B)$ via the norm:
\begin{align}
\| f \|_{\C^\be_TB} = \| f \|_{L^\infty_TB} + \|u\|_{C^\be_T B}.
\label{HO2}
\end{align}

Given $N \in \N$, 
we use   $\P_N$ to denote
 the Dirichlet projector onto the frequencies 
$\{|n| \leq N\}$
defined by 
\begin{align}
 \ft{\P_N f}(n)= \ind_{\{|n|\leq N\}} \ft f(n).
 \label{Diri1}
\end{align}

\subsection{Deterministic tools}

We first recall the key product estimate
in studying the BBM equation; see \cite[Lemma 1]{BT09}
and \cite[Lemma 3.1]{RO10}.

\begin{lemma}\label{LEM:BT}
Let $s\ge 0$.
Then, we have
\begin{align*}
\|\vp(D)(fg)\|_{H^s(\T)} \les \|f \|_{H^s(\T)}\|g \|_{H^s(\T)}, 
\end{align*}

\noi
where $\vp(D)$ is as in \eqref{phi1}.

\end{lemma}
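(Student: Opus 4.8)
The plan is to work directly on the Fourier side, exploiting that $\vp(D)$ is smoothing of order one together with the summability of $\jb{n}^{-1}$ over $\Z$, which is special to the one-dimensional setting.

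First I would record the elementary pointwise bound $|\vp(n)| = \frac{|n|}{1+n^2} \lesssim \jb{n}^{-1}$ for every $n\in\Z$ (with $\vp(0)=0$), so that $\jb{n}^{2s}|\vp(n)|^2 \lesssim \jb{n}^{2(s-1)}$. Writing $\widehat{\vp(D)(fg)}(n) = \vp(n)\sum_{n=n_1+n_2}\widehat f(n_1)\widehat g(n_2)$ and applying the triangle inequality, it suffices to bound, with $a_n := \jb{n}^s|\widehat f(n)|$ and $b_n := \jb{n}^s|\widehat g(n)|$ (so that $\|a\|_{\ell^2} = \|f\|_{H^s}$ and $\|b\|_{\ell^2} = \|g\|_{H^s}$),
\[
\Big\| \jb{n}^{s-1}\!\!\!\sum_{n=n_1+n_2}\frac{a_{n_1}b_{n_2}}{\jb{n_1}^s\jb{n_2}^s}\Big\|_{\ell^2_n} \lesssim \|a\|_{\ell^2}\|b\|_{\ell^2}.
\]

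Next I would split the summation into the regions $\{|n_1|\ge|n_2|\}$ and $\{|n_1|<|n_2|\}$, which are interchanged by the symmetry $f \leftrightarrow g$, so it is enough to treat the first. On $\{|n_1|\ge|n_2|\}$ one has $\jb{n} = \jb{n_1+n_2} \le \jb{n_1}+\jb{n_2} \le 2\jb{n_1}$, hence, since $s\ge0$, $\jb{n}^{s-1}\jb{n_1}^{-s} \lesssim \jb{n}^{-1}$. Therefore the quantity above is controlled by $\big\|\jb{n}^{-1}\sum_{n_2}a_{n-n_2}\jb{n_2}^{-s}b_{n_2}\big\|_{\ell^2_n}$ (extending the $n_2$-sum to all of $\Z$ only enlarges it), and Cauchy–Schwarz in $n_2$ for fixed $n$ gives $\sum_{n_2}a_{n-n_2}\jb{n_2}^{-s}b_{n_2} \le \|a\|_{\ell^2}\big(\sum_{n_2}\jb{n_2}^{-2s}b_{n_2}^2\big)^{1/2} \le \|a\|_{\ell^2}\|b\|_{\ell^2}$, uniformly in $n$, the last inequality using $s\ge0$. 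Since $\jb{n}^{-1}\in\ell^2(\Z)$, pulling this constant out of the $\ell^2_n$ norm completes the estimate.

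I do not expect a genuine obstacle: the only point requiring care is the endpoint $s=0$, but the argument above uses nothing beyond $s\ge0$ (the factor $\jb{n_2}^{-2s}\le1$ is all that is needed), so no separate treatment is necessary. An alternative route worth noting is to use that $\vp(D)\colon H^{s-1}\to H^s$ is bounded and reduce to the product law $\|fg\|_{H^{s-1}}\lesssim\|f\|_{H^s}\|g\|_{H^s}$, which follows from the Sobolev embedding $H^s(\T)\hookrightarrow L^p$ with $\tfrac1p=(\tfrac12-s)_+$, Hölder's inequality, and duality (using $H^1(\T)\hookrightarrow L^\infty(\T)$ at $s=0$); this has more endpoint bookkeeping, so I would present the direct Fourier argument.
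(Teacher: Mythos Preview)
Your argument is correct. The paper does not give its own proof of this lemma; it simply cites \cite[Lemma~1]{BT09} and \cite[Lemma~3.1]{RO10}, and your direct Fourier-side argument (transfer $\jb{n}^s$ to the high-frequency factor via $\jb{n}\lesssim\jb{n_1}$, then Cauchy--Schwarz together with $\jb{n}^{-1}\in\ell^2(\Z)$) is exactly the standard approach used in those references.
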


Next, we recall the product estimate on 
the product of functions of negative and positive regularities;
see \cite[Proposition 2]{BOZ25}.
See also \cite[Lemma 3.4]{GKO}.

\begin{lemma}\label{LEM:BOZ}
Let $s > 0$,  $1 < p \le \infty$ and $1 < q, r < \infty$ such that 
\[ \frac 1p + \frac 1q \le \frac 1r + \frac sd
\qquad \text{and}\qquad
q, r' \ge p', \] 

\noi
where $p'$ and $r'$ denote the H\"older conjugates of $p$ and $r$, respectively.
Then, we have
\begin{align*}
\| \jb{\nb}^{-s} (fg)\|_{L^r(\T^d)}
\les \| \jb{\nb}^{-s} f\|_{L^p(\T^d)}
\| \jb{\nb}^s g\|_{L^q(\T^d)}.
\end{align*}

\end{lemma}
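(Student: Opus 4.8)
The plan is to establish the claimed product estimate by a Littlewood--Paley/paraproduct analysis. Work with dyadic inhomogeneous projections $Q_j$ onto frequencies $|n|\sim 2^j$ (with $Q_0$ collecting $|n|\les 1$) and the associated low-pass projections $S_j=\sum_{k\le j}Q_k$. First I would apply Bony's paraproduct decomposition
\[
fg = \sum_{j} S_{j-2}f\cdot Q_j g \;+\; \sum_{j}Q_j f\cdot S_{j-2}g \;+\; \sum_{|j-k|\le 2} Q_j f\, Q_k g
=: \Pi_{\mathrm{lh}} + \Pi_{\mathrm{hl}} + \Pi_{\mathrm{res}},
\]
and bound $\|\jb{\nb}^{-s}(\,\cdot\,)\|_{L^r}$ of the three pieces separately. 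The two structural tools are Bernstein's inequality (to trade a factor $2^{jd(1/a-1/b)}$ for lowering integrability from $L^a$ to $L^b$, $a\le b$, on a block of size $2^j$, and to realize $\jb{\nb}^{\pm s}$ as a factor $\sim 2^{\pm js}$ on such a block) and H\"older's inequality (to split the $L^r$ norm of a product). The constraints $q,r'\ge p'$ are precisely the H\"older-admissibility conditions that allow routing the low-regularity factor $f$ into $L^p$ and the high-regularity factor $g$ into $L^q$ in each term.

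For the two paraproducts this is routine. In $\Pi_{\mathrm{hl}}$ the output block has frequency $\sim 2^j$, so $\jb{\nb}^{-s}$ contributes $2^{-js}$; writing $F=\jb{\nb}^{-s}f$ (so $\|Q_jf\|_{L^p}\sim 2^{js}\|Q_jF\|_{L^p}$) the two powers $2^{\pm js}$ cancel, while the low-frequency factor $S_{j-2}g$ is controlled by $\|\jb{\nb}^sg\|_{L^q}$ up to a Bernstein loss; the surplus in the Sobolev condition $\tfrac1p+\tfrac1q\le \tfrac1r+\tfrac sd$ then produces a geometric factor $2^{-j\eps}$, so the dyadic sum converges absolutely. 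The term $\Pi_{\mathrm{lh}}$ is handled symmetrically, the geometric gain again arising from the surplus in the Sobolev condition together with the positive regularity of the high-frequency factor $Q_jg$ set against the output frequency $\sim 2^j$.

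The resonant term $\Pi_{\mathrm{res}}$ is the main obstacle, since there the output frequency is only $\les 2^j$ and can be as small as $1$, so $\jb{\nb}^{-s}$ yields no pointwise gain on the output. Here I would instead use the Sobolev mapping property $\jb{\nb}^{-s}\colon L^{r_1}\to L^r$, valid precisely when $\tfrac1{r_1}\le \tfrac1r+\tfrac sd$, applied to each block $Q_jf\,Q_kg$ with $\tfrac1{r_1}=\tfrac1p+\tfrac1q$; the hypothesis $\tfrac1p+\tfrac1q\le \tfrac1r+\tfrac sd$ is exactly what makes this embedding available. H\"older then gives $\|Q_jf\,Q_kg\|_{L^{r_1}}\les \|Q_jf\|_{L^p}\|Q_kg\|_{L^q}\sim \|Q_jF\|_{L^p}\|Q_kG\|_{L^q}$ with $G=\jb{\nb}^sg$ (using $|j-k|\le2$ so $2^{js}2^{-ks}\sim1$), and the remaining sum over $j$ is closed by Cauchy--Schwarz together with the square-function estimate $\big\|(\sum_j|Q_j h|^2)^{1/2}\big\|_{L^a}\sim \|h\|_{L^a}$ for $1<a<\infty$, controlling $(\sum_j\|Q_jF\|_{L^p}^2)^{1/2}\les\|F\|_{L^p}$ and likewise for $G$.

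The genuinely delicate points—where I would spend the care—are the endpoint $p=\infty$ (for which the square-function step must be replaced by an $\ell^1$--Bernstein argument exploiting the strict room in the Sobolev condition) and verifying that the exponent bookkeeping in the paraproducts is consistent with $q,r'\ge p'$. As a cleaner alternative one may bypass the resonant analysis entirely by duality: writing $\|\jb{\nb}^{-s}(fg)\|_{L^r}=\sup_{\|\phi\|_{L^{r'}}\le1}\big|\int f\,(g\,\jb{\nb}^{-s}\phi)\big|$ and invoking the $W^{-s,p}$--$W^{s,p'}$ pairing reduces the claim to the fractional Leibniz (Kato--Ponce) estimate $\|\jb{\nb}^s(g\,\jb{\nb}^{-s}\phi)\|_{L^{p'}}\les \|\jb{\nb}^sg\|_{L^q}\|\phi\|_{L^{r'}}$, whose required exponent relations coincide exactly with the stated hypotheses.
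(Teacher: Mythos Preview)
The paper does not prove this lemma; it is quoted from \cite[Proposition~2]{BOZ25}, with a further pointer to \cite[Lemma~3.4]{GKO}. Your two outlines---the Bony paraproduct decomposition and the duality reduction to Kato--Ponce---are both standard routes to such a product estimate, and the latter is in fact close in spirit to the cited source, since \cite{BOZ25} is precisely a paper on the fractional Leibniz rule on the torus.

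Your sketch is essentially sound, with two small caveats worth recording. First, in the resonant piece the Cauchy--Schwarz step should be performed \emph{pointwise} in $x$ (after absorbing the factors $2^{\pm js}$ into modified Littlewood--Paley projections acting on $F=\jb{\nb}^{-s}f$ and $G=\jb{\nb}^{s}g$), and only then should one take the $L^{r_1}$-norm and apply H\"older; running Cauchy--Schwarz at the level of the block norms $\|Q_jF\|_{L^p}$, $\|Q_kG\|_{L^q}$ as you wrote produces the Besov norms $\|F\|_{B^0_{p,2}}$, $\|G\|_{B^0_{q,2}}$, which do not coincide with $\|F\|_{L^p}$, $\|G\|_{L^q}$ when $p,q\ne2$. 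Second, for $\Pi_{\mathrm{hl}}$ and $\Pi_{\mathrm{lh}}$ the claimed geometric factor $2^{-j\eps}$ is available only when the Sobolev condition is strict; at the endpoint $\tfrac1p+\tfrac1q=\tfrac1r+\tfrac sd$ one instead closes the $j$-sum via the square-function characterisation of $W^{-s,p}$ together with the pointwise maximal bound $|S_{j-2}g|\les Mg$. Both are routine repairs, and your flag on the $p=\infty$ endpoint is well placed.
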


Lastly, we recall the following elementary calculus lemma;
see, for example, \cite[Lemma~4.2]{GTV}
(in the continuous setting)
and \cite[Lemma 2.1]{TzvBO}.

\begin{lemma}\label{LEM:SUM}
Let $\be \ge \g \ge 0$ and $\be + \g > 1$.
Then, 
we have
\begin{align*}
\sum_{n_1 \in \Z} \frac{1}{\jb{n_1}^\be\jb{n - n_1}^\g}\les \frac{\phi_\be(n)}{\jb{n}^\g}, 
\end{align*}

\noi
where $\phi_\be(n)$ is given by 
\begin{align*}
\phi_\be(n) = \sum_{|n_1| \le |n|} \frac{1}{\jb{n_1}^\be} \sim
\begin{cases}
1, & \text{if } \be > 1, \\
\log (2 + |n|), & \text{if } \be = 1, \\
\jb{n}^{1- \be}, & \text{if } \be < 1.
\end{cases}
\end{align*}

\end{lemma}

\subsection{Probabilistic tools}

Let $\{g_n\}_{n\in \Z}$ be a sequence of 
independent 
complex-valued standard Gaussian random variables 
on some probability space 
$(\Omega, \sF, \bP)$ 
conditioned that 
$g_{-n} = \cj{g_n}$, $n \in \Z$.
Then, it is known that 
\begin{align}
\E [g_n^{k} \cj{g_m^\l}] = \ind_{n= m} \ind_{k= \l}\cdot  k!
\label{mom1}
\end{align}

\noi
for any $n, m \in \Z^*$ and $k, \l \in \N$.
In particular, we have 
\begin{align}
\E\big[|g_n|^2\big] =1 \ \  \text{for any $n \in \Z$}
\qquad 
\text{and} \qquad \E\big[|g_n|^4\big] =
\begin{cases}
2 
&  \text{for $n \in \Z^*$}\\
3 &  \text{for $n = 0$}.
\end{cases}
\label{mom2}
\end{align}

\noi

\medskip

Next, we recall a useful tool to prove
weak convergence of distribution-valued random variables, following~\cite{BDW}.
While the main results in 
\cite{BDW}
are stated for $\S'(\R^d)$-valued random variables, 
where $\S'(\R^d)$ denotes the space of  tempered distributions on $\R^d$, 
the same results also hold for
 $\D'(\T^d)$-valued (and, in fact, $(\D'(\T^d))^{\otimes m}$-valued) random variables
on $\T^d$, 
since the proof is based on the sequential representation
of elements in the Schwartz class $\S(\R^d)$
and $\S'(\R^d)$ (via Fourier-Hermite series; see \cite[Section 3]{BDW})
which also holds for $\D(\T^d)$
and $\D'(\T^d)$ (via Fourier series).

The strong topology on $\D'(\T^d)$ is generated 
by the family of semi-norms 
\begin{align*}
q_B(X) = \sup_{f\in B}{|\jb{X,f}|}, 
\quad B\subset \D(\T^d) {\text{ bounded}},
\ X\in \D'(\T^d),
\end{align*}

\noi
where a subset $B\subset \D(\T^d)$ is bounded 
if, for all neighborhood $V$ of $0$, there exists
$\ld>0$ such that $B\subset \ld V$. 
Given an integer $m \ge 2$, 
by the strong topology on 
$(\D'(\T^d))^{\otimes m}$, 
we mean the product topology 
of the strong topology on each factor.

Fix $m \in \N$.
We say that 
a sequence $\{X_n\}_{n \in \N}$
of $(\D'(\T^d))^{\otimes m}$-valued
random variables
 on some probability
space $(\Omega, \sF, \bP)$
converges in law to $X$
with respect to the strong topology 
of $(\D'(\T^d))^{\otimes m}$, 
if 
\begin{align}
\lim_{n\to \infty} \int_{(\D'(\T^d))^{\otimes m}} F(\phi) d\mu_{X_n}(\phi)= 
 \int_{(\D'(\T^d))^{\otimes m}} F(\phi) d\mu_X(\phi)
\label{BDW1}
\end{align}

\noi
for any function $F$ that is continuous and bounded on 
$(\D'(\T^d))^{\otimes m}$ equipped with the strong topology, 
where
$\mu_{X_n}$
and  $\mu_X$ denote the laws of $X_n$ and $X$, respectively, given by 
\begin{align*}
\mu_{X_n} = \bP\circ X_n^{-1}\qquad\text{and}
\qquad 
\mu_X = \bP\circ X^{-1}.
\end{align*}

\noi
The following lemma shows that 
 convergence in law 
 with respect to the strong topology~\eqref{BDW1}
is equivalent to 
convergence of  $\jb{X_n, \psi}_m
\to \jb{X, \psi}_m$ in law for every test function $\psi
= (\psi_1, \dots, \psi_m)\in (\D(\T^d))^{\otimes m}$.
Here, 
$\jb{f, g}_m$ 
denotes the $(\D'(\T^d))^{\otimes m}$-$(\D(\T^d))^{\otimes m}$ duality pairing
given by 
\begin{align}
\jb{f, g}_m = ( \jb{f_1, g_1}, \dots, \jb{f_m, g_m})
\label{dual1}
\end{align}

\noi
for $f = (f_1, \dots, f_m) \in
(\D'(\T^d))^{\otimes m}$
and 
$g = (g_1, \dots, g_m) \in
(\D(\T^d))^{\otimes m}$, 
where $\jb{f_j, g_j}$
denotes 
the $\D'(\T)$-$\D(\T)$ duality pairing.

\begin{lemma}
\label{LEM:BDW}
Let $m \in \N$.
A sequence
  $\{X_n\}_{n \in \N}$
of $(\D'(\T^d))^{\otimes m}$-valued random variables
converges in law to $X$
with respect to the strong topology 
of $(\D'(\T^d))^{\otimes m}$
if and only if 
$\R^m$-valued random variables 
$\jb{X_n, \psi}$
converges in law to   
$\jb{X, \psi}$
for all 
 $\psi\in (\D(\T^d))^{\otimes m}$.

\end{lemma}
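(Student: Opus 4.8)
The plan is to prove the two implications separately. The forward direction is immediate from continuity of the pairing; the converse rests on an automatic-tightness phenomenon for probability measures on the dual of the nuclear space $\D(\T^d)$.

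\smallskip

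For necessity, fix $\psi = (\psi_1, \dots, \psi_m) \in (\D(\T^d))^{\otimes m}$ and let $L_\psi \colon \phi \mapsto \jb{\phi, \psi}_m$ be the pairing map from $(\D'(\T^d))^{\otimes m}$ to $\R^m$. On each factor $\phi_j \mapsto \jb{\phi_j, \psi_j}$ is continuous for the weak-$\ast$ topology, hence a fortiori for the strong topology, so $L_\psi$ is continuous for the (product) strong topology. Given any bounded continuous $F$ on $\R^m$, the function $F \circ L_\psi$ is then bounded and continuous on $(\D'(\T^d))^{\otimes m}$ equipped with the strong topology, and \eqref{BDW1} applied to $F \circ L_\psi$ yields $\E[F(\jb{X_n, \psi})] \to \E[F(\jb{X, \psi})]$; that is, $\jb{X_n, \psi} \to \jb{X, \psi}$ in law on $\R^m$.

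\smallskip

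For sufficiency, I would first upgrade the hypothesis to convergence of all finite-dimensional distributions of $X_n = (X_n^{(1)}, \dots, X_n^{(m)})$. Given $\eta_1, \dots, \eta_k \in \D(\T^d)$, indices $j_1, \dots, j_k \in \{1, \dots, m\}$, and scalars $c_1, \dots, c_k \in \R$, grouping by coordinate and using linearity of the pairing in the test function gives
\[
\sum_{i=1}^k c_i \jb{X_n^{(j_i)}, \eta_i}
= \sum_{j=1}^m \jb{X_n^{(j)}, \zeta_j},
\qquad \zeta_j := \sum_{i : j_i = j} c_i \eta_i \in \D(\T^d),
\]
and the right-hand side is the image of $\jb{X_n, (\zeta_1, \dots, \zeta_m)}_m \in \R^m$ under the continuous sum-of-coordinates map; by hypothesis $\jb{X_n, (\zeta_1, \dots, \zeta_m)}_m$ converges in law, so the left-hand side does too, and the Cram\'er--Wold device gives joint convergence in law of $(\jb{X_n^{(j_1)}, \eta_1}, \dots, \jb{X_n^{(j_k)}, \eta_k})$ in $\R^k$. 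It then remains to promote convergence of these finite-dimensional distributions to convergence in law for the strong topology, for which I need tightness of $\{X_n\}_n$ in $(\D'(\T^d))^{\otimes m}$. Here I would invoke the Fourier-series realization of $\D(\T^d)$ as the projective limit $\bigcap_{a \ge 0} H^a(\T^d)$, with strong dual $\D'(\T^d) = \bigcup_{a \ge 0} H^{-a}(\T^d)$ and Hilbert--Schmidt embeddings $H^{-a}(\T^d) \hookrightarrow H^{-b}(\T^d)$ whenever $b - a > d/2$ (so that closed balls of $H^{-a}$ are compact in $H^{-b}$). Since a convergent sequence of laws on $\R^m$ is uniformly tight, for every $\psi$ the family $\{\Law(\jb{X_n, \psi})\}_n$ is uniformly tight; feeding this coordinatewise tightness (for the test functions built from the Fourier basis) into the Mitoma-type argument for duals of nuclear Fr\'echet spaces --- which is precisely what the sequential Fourier(-Hermite) representation of \cite{BDW} supplies --- yields uniform tightness of $\{X_n\}_n$ in $(\D'(\T^d))^{\otimes m}$ for the strong topology. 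By Prokhorov's theorem every subsequence then has a further subsequence converging in law (for the strong topology) to some $\nu$; by the necessity part $\nu \circ L_\psi^{-1} = \lim_n \Law(\jb{X_n, \psi}) = \Law(\jb{X, \psi})$ for all $\psi$, and since the strong Borel $\sigma$-algebra of $(\D'(\T^d))^{\otimes m}$ agrees with the cylindrical one (again by nuclearity/separability), $\nu$ is determined by these projections, so $\nu = \Law(X)$; hence the whole sequence converges in law to $X$.

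\smallskip

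The genuine obstacle is the tightness step: turning coordinatewise tightness into tightness on the dual of a nuclear space requires, for a single $\varepsilon > 0$, producing one strongly-compact subset of $\D'(\T^d)$ carrying mass at least $1 - \varepsilon$ uniformly in $n$, and the naive direction-by-direction bounds do not suffice. The Hilbert--Schmidt structure of the Fourier truncations of $\D(\T^d)$ is exactly what makes such a choice possible, and this Mitoma/Fernique-type mechanism --- carried out for $\S'(\R^d)$ in \cite{BDW} and transferring verbatim to $\D'(\T^d)$ via Fourier series --- is the technical heart of the argument; the rest is the soft measure-theoretic packaging above.
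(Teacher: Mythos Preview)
Your proposal is correct and aligns with the paper's treatment: the paper does not give its own proof but cites \cite{Fer1} and \cite{BDW} for $m=1$ and remarks that the argument extends to general $m$ via the Fourier-series sequential representation of $\D(\T^d)$ and $\D'(\T^d)$ --- exactly the nuclear/Mitoma-type tightness mechanism you sketch. Your outline (necessity by continuity of the pairing; sufficiency by Cram\'er--Wold, Mitoma-type tightness from the Hilbert--Schmidt embeddings of the Sobolev scale, Prokhorov, and identification of the limit via the cylindrical $\sigma$-algebra) is precisely the content behind those references, so there is nothing to add.
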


See \cite[Th\'eor\`eme III.6.5]{Fer1}
and  \cite[Theorem 5.1]{BDW} for a proof
when $m = 1$; see also 
\cite[Theorem 2.2]{Simon0}, 
\cite[comment after Theorem 2.3]{BDW}, 
and \cite[Corollary~2.4]{BDW}.
We note that the argument in \cite{BDW}
easily extends to the case of general $m \in \N$.
In Lemma \ref{LEM:BDW}, 
we use the strong topology
but the same result also holds
with the weak topology 
of $(\D'(\T^d))^{\otimes m}$,
since
 $(\D'(\T^d))^{\otimes m}$ is a Montel space
and thus any weakly convergent sequence
is also strongly convergent
thanks to the Banach-Steinhaus theorem 
for
Fr\'echet spaces;
see \cite[Remark 6.6]{BDW}
and \cite[Corollary 1 of Proposition 34.6]{Treves}.

In Subsection \ref{SUBSEC:Z1}, 
we study convergence of finite-dimensional 
marginals 
$\{ Z_N(t_j)\}_{j = 1}^m$
where  $Z_N$ is as in \eqref{I3}.
By viewing it as a $ (\D'(\T))^{\otimes m}$-valued random variable, 
we can 
apply Lemma~\ref{LEM:BDW}
and reduce the matter to establishing 
convergence of 
$\big\{ \jb{Z_N(t_j), \psi_j}\big\}_{j = 1}^m$
for each $\{\psi_j\}_{j = 1}^m
\in (\D(\T^d))^{\otimes m}$, 
which we identify with 
\[\Big\langle\bigotimes_{j = 1}^m Z_N(t_j),  \bigotimes_{j = 1}^m \psi_j\Big\rangle_m.\]


\noi
For this purpose, we  mention
a related useful result
\cite[Theorem~6.15]{Walsh}.

\medskip

Next, we  review several basic concepts in 
Gaussian analysis; see, for example, 
\cite{Janson, Nualart, NP} for further details.

An isonormal Gaussian process 
$W=\{W(f): f\in L^2(\T)\}$
over $L^2(\T)$ on some probability space 
$(\Omega, \sF, \bP)$
is a centered Gaussian process 
 such 
that
\begin{align*}
\E[W(f_1)W(f_2)] = \jb{f_1,f_2}_{L^2(\T)}
\end{align*}

\noi
for any $f_1, f_2 \in L^2(\T)$.
For our application, 
 set 
\begin{align}
W(f) = \sum_{n\in \Z} \ft f (n) g_n, 
\label{WW1}
\end{align}

\noi
 for $f\in L^2(\T)$,
where  $\{g_n\}_{n \in \Z}$
is  as in \eqref{data1}.
Note that, given any $f \in L^2(\T)$,  
 the sum in~\eqref{WW1} converges in $L^2(\Omega, \sF, \bP)$.
The action~\eqref{WW1} on $h$ by the white noise
is referred to as the white noise functional in \cite{OTh1, ORT}.
Note that $W(f)$ is basically the `periodic' Wiener integral on $\T$.

Given $k \in \N \cup \{0\}$, 
 the Hermite polynomials $H_k(x)$
 of degree $k$ is defined
 through the generating function:
\begin{equation*}
  e^{tx - \frac{1}{2} t^2} = \sum_{k = 0}^\infty \frac{t^k}{k!} H_k(x)
 \end{equation*}
	
\noi
for $t, x \in \R$.
 For readers' convenience, we write out the first few Hermite polynomials
in the following:
\begin{align*}
& H_0(x) = 1, 
\qquad 
H_1(x) = x, 
\qquad
H_2(x) = x^2 - 1. \\
\end{align*}

We use 
 $\H_k$ to denote the $k$th
homogeneous Wiener chaos, 
which is the closed linear subspace 
of $L^2(\Omega, \sF, \bP)$
generated by the set 
$\{H_k(W(f)): f\in L^2(\T), \|f\|_{L^2(\T)}=1\}$.
We then set 
\[ \H_{\le k} =  \bigoplus_{j = 0}^k \H_{j}.\]

\noi
For $k \in \N$
and $f\in L^2(\T)$ 
with $\|f\|_{L^2(\T)}=1$,
the mapping
$I_k$ satisfying 
\begin{align}
I_k(f^{\otimes k}) = H_k(W(f))
\label{WW2}
\end{align}

\noi
can be extended to a linear isometry 
between $L^2_\text{sym}(\T^k)$ 
and $\H_k$, 
where $L^2_\text{sym}(\T^k)$
is the closed subspace of 
$L^2(\T^k)$ 
consisting of symmetric functions, 
equipped with the scaled norm:
$\|f\|_{L^2_\text{sym}(\T^k)}
= \sqrt{k!} \|f\|_{L^2(\T^k)}$.
For general 
$f\in L^2(\T^k)$, 
we can extend the definition of $I_k$
by setting 
\begin{align}
I_k(f) = I_k(f_\text{sym} ),
\label{WW3}
\end{align}

\noi
where 
$f_\text{sym}$ is the symmetrization of $f$ defined by 
\begin{align}
f_\text{sym} (x_1, x_2, \cdots, x_k)
= \frac{1}{k!} \sum_{\s\in S_k}
f(x_{\s(1)}, x_{\s(2)}, \cdots, x_{\s(k)}).
\label{WW4}
\end{align}

\noi
Here,  $S_k$ is the symmetric group on 
$\{1, 2, \cdots, k\}$.
We note that 
$I_k(f)$ corresponds to  the multiple Wiener integral
with respect to $W$; see 
\cite[Section 1.1.2]{Nualart}.

Given  $f_j\in L^2_\text{sym}(\T^{k_j})$, $j = 1, 2$, 
and $r=1,\cdots, \min(k_1,k_2)$, the contraction of $r$
indices of $f_1$ and $f_2$ is an element of 
$L^2(\T^{k_1+k_2-2r})$ defined by 
\begin{align}
\begin{aligned}
f_1 \otimes_r f_2 (x_1, \cdots, x_{k_1+k_2-2r})
& = \int_{\T^r}
f_1(x_1, \cdots, x_{k_1-r}, y_1, \cdots, y_r) \\
& \qquad \times
f_2(x_{k_1 +1 - r }, \cdots, x_{k_1+k_2-2r}, y_1, \cdots, y_r)
dy_1\cdots dy_r,
\end{aligned}
\label{N3a}
\end{align}

\noi
and $f_1 \otimes_0 f_2 = f_1\otimes f_2$.

As a consequence
of the  hypercontractivity of the Ornstein-Uhlenbeck
semigroup  due to Nelson \cite{Nelson2}, 
we have the following Wiener chaos estimate
\cite[Theorem~I.22]{Simon}.
See also \cite[Proposition~2.4]{TTz}.

\begin{lemma}\label{LEM:hyp}
Let $k \in \N$.
Then, we have
\begin{equation*}
\|X \|_{L^p(\O)} \leq (p-1)^\frac{k}{2} \|X\|_{L^2(\O)}
 \end{equation*}
 
 \noi
 for any finite $p \geq 2$
 and any $X \in \H_{\le k}$

\end{lemma}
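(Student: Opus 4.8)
The plan is to derive the estimate directly from Nelson's hypercontractivity of the Ornstein--Uhlenbeck semigroup \cite{Nelson2}, of which it is the standard Wiener-chaos consequence (this is precisely the route taken in \cite[Theorem I.22]{Simon}). Let $T_t$, $t\ge 0$, denote the Ornstein--Uhlenbeck semigroup associated with the Gaussian structure generated by $\{g_n\}_{n\in\Z}$; recall that $T_t$ acts as multiplication by $e^{-jt}$ on the $j$th homogeneous Wiener chaos $\H_j$. Nelson's theorem asserts that, for finite $p\ge 2$, the operator $T_t$ is a contraction from $L^2(\O)$ to $L^p(\O)$, i.e.
\[
\|T_t f\|_{L^p(\O)} \le \|f\|_{L^2(\O)}\qquad\text{for all } f\in L^2(\O),
\]
precisely when $e^{2t}\ge p-1$. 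I take this hypercontractivity bound as the sole analytic input.

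First I would fix a finite $p\ge 2$ and set $t=\tfrac12\log(p-1)\ge 0$, so that $e^{2t}=p-1$ and Nelson's estimate is in force. Given $X\in\H_{\le k}$, I decompose it into its chaos components $X=\sum_{j=0}^{k}X_j$ with $X_j\in\H_j$; these are mutually orthogonal in $L^2(\O)$, so $\|X\|_{L^2(\O)}^2=\sum_{j=0}^{k}\|X_j\|_{L^2(\O)}^2$. The crucial observation is that, since membership in $\H_{\le k}$ forces $X$ to be supported on only finitely many chaoses, the a priori unbounded operator $T_{-t}$ can be applied to $X$ without leaving $L^2(\O)$: setting
\[
f=\sum_{j=0}^{k} e^{jt}\,X_j \in \H_{\le k}\subset L^2(\O),
\]
one has $T_t f=\sum_{j=0}^{k} e^{-jt}e^{jt}X_j=X$. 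Thus $X$ is the hypercontractive image of an explicit $L^2$ function $f$ whose norm I can control.

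It then remains to estimate $\|f\|_{L^2(\O)}$. By orthogonality and the elementary inequality $e^{2jt}\le e^{2kt}$ valid for $0\le j\le k$ and $t\ge 0$,
\[
\|f\|_{L^2(\O)}^2
=\sum_{j=0}^{k} e^{2jt}\|X_j\|_{L^2(\O)}^2
\le e^{2kt}\sum_{j=0}^{k}\|X_j\|_{L^2(\O)}^2
=(p-1)^{k}\|X\|_{L^2(\O)}^2,
\]
using $e^{2t}=p-1$. Combining this with Nelson's estimate applied to $f$ yields
\[
\|X\|_{L^p(\O)}
=\|T_t f\|_{L^p(\O)}
\le \|f\|_{L^2(\O)}
\le (p-1)^{\frac{k}{2}}\|X\|_{L^2(\O)},
\]
which is exactly the asserted bound (the case $p=2$ being the trivial identity $t=0$, $f=X$).

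The genuinely substantial ingredient is Nelson's hypercontractivity theorem itself, whose proof---via the sharp two-point inequality and tensorization, or via the Mehler kernel of $T_t$---I regard as given. Within the present derivation the only point demanding care is the legitimacy of inverting $T_t$ on $X$, and this is unproblematic precisely \emph{because} $X\in\H_{\le k}$ lives in finitely many chaoses, so rescaling the $j$th component by the finite factor $e^{jt}$ keeps $f$ in $L^2(\O)$; the hypothesis $p\ge 2$ is used exactly to ensure $t\ge 0$, which both makes Nelson's estimate available and makes the weights $e^{2jt}$ nondecreasing in $j$, giving the clean constant $(p-1)^{k/2}$ rather than the weaker bound one would obtain from the triangle inequality applied chaos-by-chaos.
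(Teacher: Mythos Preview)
Your proof is correct and is precisely the standard derivation of the Wiener chaos estimate from Nelson's hypercontractivity; the paper does not actually prove this lemma but simply states it as a consequence of \cite{Nelson2} and cites \cite[Theorem~I.22]{Simon} and \cite[Proposition~2.4]{TTz}, which contain exactly the argument you wrote out.
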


The following lemma allows us to determine
regularity properties
of a given process.
For $h\in \R$, we define the difference operator
by 
\begin{align}
\dl_h X(t) = X(t+h) - X(t).
\label{dl1}
\end{align}

\noi
We say a stochastic process $X:\R_+ \to \D'(\T^d)$
is stationary in space if for any $x\in \T^d$, 
the processes $\{X(t, \cdot)\}_{t\ge 0}$ 
and $\{X(t, x+\cdot)\}_{t\ge 0}$ have the same law.

\begin{lemma}
\label{LEM:OOT}
Let 
$\{ X_N \}_{N \in \N}$
and $X$ 
spatially stationary  stochastic processes:
$\R_+\to \D'(\T^d)$. 
Suppose that there exists $k\in \N$ such that 
$X_N(t)$ and  $X(t)$ 
belong to $\H_{\le k}$ for any $t\in \R_+$.

\smallskip
\noi
{\rm (i)}
Let $t\in \R_+$. If there exists $s_0\in \R$ such that 
\begin{align*}
\E\big[|\ft{X}(t,n)|^2\big] \les \jb{n}^{-d-2s_0}
\end{align*}

\noi
for any $n\in \Z^d$,
then we have $X(t) \in  W^{s,\infty}(\T^d)$  for $s<s_0$,
 almost surely, satisfying
\begin{align}
\E\big[\| X(t)\|_{W^{s, \infty}}^p\big] \les p^{\frac{kp}{2}}
\label{dl2}
\end{align}

\noi
for any $p \ge 1$.
In particular, we have 
\begin{align}
\PP \Big(  \| X(t)\|_{W^{s, \infty}}
> \ld \Big)
\le C e^{-c\ld^\frac2k}
\label{dl3}
\end{align}

\noi
for any $\ld > 0$.
Furthermore, if there exists $\g > 0$ such that 
\begin{align*}
\E\big[ |\ft X_N(t, n) - \ft X(t, n)|^2\big]\les N^{-\g} \jb{n}^{ - d - 2s_0}
\end{align*}

\noi
for any $n \in \Z^d$ and $N \geq 1$, 
then 
$X_N(t)$ converges to $X(t)$ in $W^{s, \infty}(\T^d)$, $s < s_0$, 
almost surely.

\smallskip
\noi
{\rm(ii)}
Let $T > 0$ and suppose that \textup{(i)} holds on $[0, T]$.
If there exists $\theta \in (0, 1)$ such that\footnote{There is a typo in 
 \cite[Proposition 2.7\,(ii)]{OOT};
in order to match the result
in \cite[Lemma A.1]{OOT}, 
$\ta$ in \cite[Proposition 2.7\,(ii)]{OOT}
needs to be replaced by $ 2\ta$.} 
\begin{align*}
\E\big[ |\dl_h \ft{X}(t,n)|^2\big]
 \les \jb{n}^{ - d - 2s_0}
|h|^{2\theta}
\end{align*}

\noi
for any  $n \in \Z^d$, $t \in [0, T]$, and $h \in [-1, 1]$,\footnote{We impose $h \geq - t$ such that $t + h \geq 0$.}
then we have 
$X \in C([0, T]; W^{s, \infty}(\T^d))$, 
$s < s_0$,  almost surely, 
satisfying
\begin{align}
\E\big[\| \dl_h X(t)\|_{W^{s, \infty}}^p\big] \les p^{\frac{kp}{2}}|h|^{p \ta}
\label{dl4}
\end{align}

\noi
for any $p \ge 1$.
In particular,  given any $p > \ta^{-1}$ and $0 < \be < \ta - \frac 1p$, 
there exists $C_{p, \be} > 0$ such that 
\begin{align}
\PP \Big(  \| X\|_{C^\be(I; W^{s, \infty}_x)}
> \ld \Big)
\le C_{\be,  p} \ld^{-p} 
\label{dl5}
\end{align}

\noi
for any $\ld > 0$ and any subinterval $I \in [0, T]$ with $|I| \le 1$, 
where $C^\be(I; W^{s, \infty}(\T))$ is as in~\eqref{HO1}.
Furthermore, 
if there exists $\g > 0$ such that 
\begin{align*}
 \E\big[ |\dl_h \ft X_N(n, t) - \dl_h \ft X(n, t)|^2\big]
 \les N^{-\g}\jb{n}^{ - d - 2s_0}
|h|^{2\ta}
\end{align*}

\noi
for any  $n \in \Z^d$, $t \in [0, T]$,  $h \in [-1, 1]$, and $N \geq 1$, 
then 
$X_N$ converges to $X$ in $C([0, T]; W^{s, \infty}(\T^d))$, $s < s_0$,
almost surely.

\smallskip
\noi
{\rm(iii)}
Suppose that the hypotheses in {\rm (i)} for $t = 0$ and {\rm (ii)} hold.
 Given any $p > \ta^{-1}$ and $0 < \be < \ta - \frac 1p$, 
there exists $C_{p, \be} > 0$ such that 
\begin{align}
\PP \Big(  \| X\|_{\C^\be_T W^{s, \infty}_x}
> \ld \Big)
\le C_{\be,  p} T \ld^{-p} 
\label{dl6}
\end{align}

\noi
for any $\ld > 0$, 
where $\C^\be([0, T]; W^{s, \infty}(\T))$ is as in \eqref{HO2}.

\end{lemma}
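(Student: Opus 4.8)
The statement is a standard regularity criterion, so the plan is to assemble the familiar ingredients: spatial stationarity, to convert an $L^p_x$-norm of a fixed time slice into a single pointwise second moment; the Wiener chaos estimate of Lemma~\ref{LEM:hyp}, to upgrade $L^2(\Omega)$-control into $L^p(\Omega)$-control with the explicit $p^{kp/2}$-growth; the Sobolev embedding, to descend from $L^2$- to $L^\infty$-based Sobolev spaces; and Kolmogorov's continuity theorem for Banach-space-valued processes, for the time regularity in (ii). Throughout, the extra factor $N^{-\g}$ in the convergence hypotheses will be converted into almost sure convergence via Chebyshev's inequality and the Borel--Cantelli lemma.

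For part~(i), fix $s < s_0$, choose $p$ large enough that $d/p < s_0 - s$, and pick $\tau \in (d/p, s_0 - s)$. By the Sobolev embedding $W^{\tau, p}(\T^d) \hookrightarrow W^{0,\infty}(\T^d)$ applied to $\jb{\nb}^s X(t)$, followed by Tonelli's theorem, $\E\big[\|X(t)\|_{W^{s,\infty}}^p\big] \les \int_{\T^d} \E\big[|\jb{\nb}^{s+\tau} X(t,x)|^p\big]\, dx$, and since $X(t,\cdot)$ is spatially stationary and $|\T^d| = 1$ this equals $\E\big[|\jb{\nb}^{s+\tau} X(t,0)|^p\big]$. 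As $s + \tau < s_0$, the Fourier series of $\jb{\nb}^{s+\tau}X(t,0)$ converges in $L^2(\Omega)$, so this random variable lies in the closed subspace $\H_{\le k}$; Lemma~\ref{LEM:hyp} bounds its $p$-th moment by $(p-1)^{kp/2}\big(\E[|\jb{\nb}^{s+\tau}X(t,0)|^2]\big)^{p/2}$. Finally, spatial stationarity makes the cross-frequency correlations vanish, so $\E[|\jb{\nb}^{s+\tau}X(t,0)|^2] = \sum_{n} \jb{n}^{2(s+\tau)}\E[|\ft X(t,n)|^2] \les \sum_n \jb{n}^{2(s+\tau) - d - 2s_0} < \infty$. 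This chain yields \eqref{dl2} (extending to all $p \ge 1$ by monotonicity of $L^p(\Omega)$-norms), and \eqref{dl3} follows by Chebyshev's inequality after optimizing in $p \sim \ld^{2/k}$. Since $\H_{\le k}$ is a linear space, $X_N(t) - X(t) \in \H_{\le k}$, so running the identical argument on $X_N(t) - X(t)$ with the extra hypothesis gives $\E[\|X_N(t) - X(t)\|_{W^{s,\infty}}^p] \les p^{kp/2} N^{-\g p/2}$; Chebyshev at a threshold $N^{-\g/4}$ with $p$ large, together with Borel--Cantelli, gives the asserted almost sure convergence.

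For part~(ii), we apply the part~(i) machinery to the increment $\dl_h X(t) = X(t+h) - X(t) \in \H_{\le k}$, now invoking the hypothesis $\E[|\dl_h\ft X(t,n)|^2] \les \jb{n}^{-d-2s_0}|h|^{2\ta}$ in place of the pointwise one; this produces \eqref{dl4}. Since $\E[\|\dl_h X(t)\|_{W^{s,\infty}}^p] \les p^{kp/2}|h|^{p\ta}$ with $p\ta > 1$ once $p$ is large, Kolmogorov's continuity criterion in the (separable, by the paper's convention) Banach space $W^{s,\infty}(\T^d)$ provides a modification in $C^\be(I; W^{s,\infty}_x)$ for every $\be < \ta - \tfrac1p$ and every subinterval $I$ with $|I| \le 1$, together with the quantitative bound $\E[\|X\|_{C^\be(I; W^{s,\infty}_x)}^p] \les_{\be,p} 1$; then \eqref{dl5} is Chebyshev. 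The convergence statement follows by applying the same reasoning to $\dl_h(X_N - X)$, which controls $\|X_N - X\|_{C^\be_T W^{s,\infty}_x}$, combined with the convergence $X_N(0) \to X(0)$ available from part~(i). Part~(iii) is obtained by writing $\|X\|_{\C^\be_T W^{s,\infty}_x} \le \|X(0)\|_{W^{s,\infty}} + (1 + T^\be)\|X\|_{C^\be_T W^{s,\infty}_x}$, covering $[0,T]$ by $\les T$ subintervals of length $\le 1$ (and noting that increments over distances $\ge 1$ are controlled by twice $\|X\|_{L^\infty_T W^{s,\infty}_x}$) to reduce $\|X\|_{C^\be_T W^{s,\infty}_x}$ to the maximum of $\|X\|_{C^\be(I;W^{s,\infty}_x)}$ over these subintervals and the sup norm, and finally combining the tail bound~\eqref{dl3} at $t=0$ with a union bound over the $\les T$ subintervals via~\eqref{dl5}; this is precisely what produces the factor $T$ in~\eqref{dl6}.

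None of the steps is genuinely deep; the only points demanding care are the bookkeeping of the $p$-dependence of the constants, so that the $p^{kp/2}$-growth in \eqref{dl2} and \eqref{dl4} is sharp enough to yield the stretched-exponential tail \eqref{dl3} and the polynomial tails \eqref{dl5}--\eqref{dl6} after the optimization or choice of $p$, and the verification that the Kolmogorov criterion is applied in the separable version of $W^{s,\infty}(\T^d)$ fixed by the paper's convention, so that one really obtains a continuous modification and not merely a measurable one.
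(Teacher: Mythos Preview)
Your proposal is correct and follows essentially the same approach as the paper: the paper's proof defers most of (i) and (ii) to \cite[Proposition~2.7 and Lemma~A.1]{OOT} and \cite[Proposition~3.6]{MWX}, noting that those arguments rest on the Wiener chaos estimate (Lemma~\ref{LEM:hyp}) together with a Kolmogorov-type continuity argument, derives \eqref{dl3} from \eqref{dl2} via Chebyshev with optimization in $p$, \eqref{dl5} from \eqref{dl4} via Kolmogorov's criterion, and \eqref{dl6} from \eqref{dl3}, \eqref{dl5}, and subadditivity over subintervals. Your write-up simply makes these standard steps explicit (Sobolev embedding plus stationarity to reduce to a single point, hypercontractivity, Chebyshev/Borel--Cantelli for convergence, and the union bound over $\les T$ unit subintervals for (iii)), which matches the paper's sketch in both structure and content.
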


\begin{proof}
Most of (i) and (ii) follows from 
in \cite[Proposition~2.7 and Lemma~A.1]{OOT}
whose proof is based on the Wiener chaos estimate (Lemma \ref{LEM:hyp})
and Kolmogorov's continuity criterion type argument
(for (ii));
see also \cite[Proposition~3.6]{MWX}.
The bound~\eqref{dl3} follows from \eqref{dl2}
and \cite[Lemma 4.5]{TzvBO}
(or Chebyshev's inequality as in \cite[Lemma 3]{BOP1}\footnote{Lemma 2.2 in the arXiv version.}), 
while \eqref{dl5}
follows from~\eqref{dl4}
and  Kolmogorov's continuity criterion
\cite[Exercise~8.2]{Bass} (which also holds
for a general metric space).
Lastly, \eqref{dl6}
follows from \eqref{dl3}, \eqref{dl5}, and the subadditivity over (disjoint) intervals.
\end{proof}

We now recall a version of the 
 fourth moment theorem
(see 
\cite[Theorem 6.2.3]{NP} with
\cite[Theorem 5.2.7]{NP}).
%
We use the following lemma to 
prove 
convergence of  finite-dimensional marginals (in time)
of $Z_N$ in~\eqref{I3} 
(more precisely, tested against test functions
in the form $\big\{ \jb{Z_N(t_j), \psi_j}\big\}_{j = 1}^m$),
showing uniqueness of the limit
of $\{Z_N \}_{N \in \N}$.
See Subsection \ref{SUBSEC:Z1}.

\begin{lemma}[fourth moment theorem]
\label{LEM:FMT}
Let   $m \in \N$
and $\{k_i\}_{i = 1}^m  \subset \N$.
Let $\{F_n\}_{n \in \N}$ be a sequence of random vectors given by 
\begin{align*}
F_n = (F_n^1,\cdots, F_n^m) = (I_{k_1}(f^1_n), \cdots, I_{k_m}(f^m_n)), 
\end{align*}

\noi
where 
$f^i_n\in L^2_\textup{sym}(\T^{k_i})$, $i = 1, \dots, m$, 
such that 
an $m\times m$  matrix  $\Cs=(\Cs_{ij})_{1\le i,j\le m}$ 
defined by 
\noi
\begin{align}
\Cs_{ij}
= 
\lim_{n\to \infty} \E[F_n^i F_n^j] , \quad 1\le i,j\le m, 
\label{I5}
\end{align}

\noi
is positive semi-definite.
Then, 
the following statements are equivalent\textup{:}

\smallskip
\noi

\begin{itemize}
\item
[\textup{(i)}]
For every $1\le i \le m$, we have
\begin{align*}
\lim_{n\to \infty} \E[(F_n^i)^4]  = 3 \Cs_{ii}^2. 
\end{align*}

\smallskip
\item[\textup{(ii)}]
For every $1\le i \le m$ and  $1\le r\le k_i -1$, we have 
\begin{align*}
\lim_{n\to \infty}\|f_n^i \otimes_r f_n^i\|_{L^2(\T^{2(k_i-r)})}
=0.
\end{align*}

\smallskip

\item[\textup{(iii)}]
As $n \to \infty$, the random vector $F_n$ converges in law to an $m$-dimensional Gaussian vector
$Z\sim \NN(0,\Cs)$.

\end{itemize}

\end{lemma}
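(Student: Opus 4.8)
The statement is the multivariate fourth moment theorem of Nualart--Peccati \cite{NP05}, in the Peccati--Tudor form, and the plan is to establish the cycle $(\mathrm{iii}) \Rightarrow (\mathrm{i}) \Rightarrow (\mathrm{ii}) \Rightarrow (\mathrm{iii})$. The implication $(\mathrm{iii}) \Rightarrow (\mathrm{i})$ is the soft one: since each $F_n^i = I_{k_i}(f_n^i)$ lies in the fixed chaos $\H_{k_i} \subset \H_{\le k_i}$ and $\E[(F_n^i)^2] = k_i!\,\|f_n^i\|_{L^2}^2 \to \Cs_{ii}$ stays bounded in $n$, the hypercontractivity estimate (Lemma~\ref{LEM:hyp}) gives $\sup_n \E[|F_n^i|^8] < \infty$, so the family $\{(F_n^i)^4\}_n$ is uniformly integrable; convergence in law then upgrades to convergence of fourth moments, yielding $\E[(F_n^i)^4] \to \E[\xi^4] = 3\Cs_{ii}^2$ for $\xi \sim \NN(0, \Cs_{ii})$.

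For $(\mathrm{i}) \Leftrightarrow (\mathrm{ii})$ I would argue componentwise, starting from the multiplication formula for multiple Wiener--It\^o integrals,
\begin{align*}
I_k(f)^2 = \sum_{r=0}^{k} r!\binom{k}{r}^2 I_{2k-2r}\big(f \otimes_r f\big).
\end{align*}
Squaring once more and taking expectations (using the Wiener isometry together with the orthogonality of distinct Wiener chaoses) produces a closed formula for $\E[I_k(f)^4]$ whose difference from $3\big(\E[I_k(f)^2]\big)^2$ is a finite sum, over $r = 1, \dots, k-1$, of explicit \emph{positive} multiples of $\|f \otimes_r f\|_{L^2}^2$ together with nonnegative terms involving the symmetrizations of $f \otimes_r f$; see \cite[Chapter~5]{NP}. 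Since all the coefficients are nonnegative, $\E[(F_n^i)^4] \to 3\Cs_{ii}^2 = 3\big(\lim_n \E[(F_n^i)^2]\big)^2$ holds if and only if $\|f_n^i \otimes_r f_n^i\|_{L^2} \to 0$ for every $1 \le r \le k_i - 1$; this is exactly $(\mathrm{i}) \Leftrightarrow (\mathrm{ii})$. (If $\Cs_{ii} = 0$ both conditions are automatic, since then $\|f_n^i\|_{L^2} \to 0$ already kills every contraction.)

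For $(\mathrm{ii}) \Rightarrow (\mathrm{iii})$ I would first treat a single sequence $G_n = I_k(h_n)$ with $\E[G_n^2] \to \sigma^2$ and $\|h_n \otimes_r h_n\|_{L^2} \to 0$: the Malliavin--Stein inequality bounds the total-variation distance of $G_n$ to $\NN(0, \sigma^2)$ by $\big|\sigma^2 - \E[G_n^2]\big|$ plus a constant times $\big(\var\big(\tfrac1k\|DG_n\|_{L^2(\T)}^2\big)\big)^{1/2}$, where $D$ is the Malliavin derivative, and a further application of the product formula controls $\var\big(\tfrac1k\|DG_n\|_{L^2(\T)}^2\big)$ by a constant times $\sum_{r=1}^{k-1}\|h_n \otimes_r h_n\|_{L^2}^2$, which tends to $0$; hence $G_n \to \NN(0, \sigma^2)$, and in particular $F_n^i \to \NN(0, \Cs_{ii})$ for each $i$. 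To pass to the joint law I would fix $a = (a_1, \dots, a_m) \in \R^m$. In the situation relevant to Theorem~\ref{THM:1} all the orders agree ($k_i = 2$), so $\sum_i a_i F_n^i = I_2\big(\sum_i a_i f_n^i\big)$; its second moment converges to $a^{\mathsf{T}} \Cs a$ by \eqref{I5}, and its only nontrivial contraction obeys $\big\|\big(\sum_i a_i f_n^i\big) \otimes_1 \big(\sum_j a_j f_n^j\big)\big\|_{L^2} \les \sum_{i,j} |a_i a_j|\, \|f_n^i \otimes_1 f_n^j\|_{L^2}$, and each summand tends to $0$ --- the diagonal ones by $(\mathrm{ii})$, the off-diagonal ones via the identity $\|f_n^i \otimes_1 f_n^j\|_{L^2}^2 = \jb{f_n^i \otimes_1 f_n^i,\, f_n^j \otimes_1 f_n^j}_{L^2}$ and Cauchy--Schwarz. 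The scalar result then gives $\sum_i a_i F_n^i \to \NN(0, a^{\mathsf{T}} \Cs a)$, and as $a$ is arbitrary the Cram\'er--Wold device yields $F_n \to \NN(0, \Cs)$. For general, possibly distinct, orders $k_i$ one instead invokes the multivariate Malliavin--Stein bound controlling a smooth-test-function distance between $F_n$ and $\NN(0, \Cs)$ by $\sum_{i,j} \E\big|\Cs_{ij} - \tfrac1{k_j}\jb{DF_n^i, DF_n^j}_{L^2(\T)}\big|$, again estimated through cross-contraction norms; see \cite[Chapter~6]{NP}.

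The step I expect to be the genuine obstacle is this final passage from componentwise to joint convergence when the $k_i$ are not all equal: then $\sum_i a_i F_n^i$ is no longer a single multiple integral, and one must either deploy the full multivariate Stein machinery or verify the mixed-chaos contraction estimates that force the relevant distance to vanish. For the application to $Z_N$ this issue does not arise, since every finite-dimensional marginal of $Z_N$ lives in the second Wiener chaos, and the Cram\'er--Wold reduction above goes through directly.
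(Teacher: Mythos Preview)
The paper does not prove Lemma~\ref{LEM:FMT}; it is stated as a recalled result, with a citation to \cite[Theorem~6.2.3]{NP} combined with \cite[Theorem~5.2.7]{NP}, together with the remark that only the implication $(\mathrm{ii})\Rightarrow(\mathrm{iii})$ is used and that positive semi-definiteness of $\Cs$ (rather than positive definiteness, as in \cite{PT}) suffices. So there is no ``paper's own proof'' to compare against.

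Your sketch is the standard Nourdin--Peccati argument and is correct. A couple of minor comments: the contraction identity you quote, $\|f\otimes_1 g\|_{L^2}^2 = \jb{f\otimes_1 f,\, g\otimes_1 g}_{L^2}$, does hold for real symmetric $f,g\in L^2(\T^2)$ (and the paper works exclusively with real-valued function spaces), so the Cauchy--Schwarz step controlling the off-diagonal contractions is valid; and your honest flag at the end---that the Cram\'er--Wold reduction via a single multiple integral only works when all $k_i$ agree, whereas the general case needs the multivariate Malliavin--Stein bound---is exactly right and matches how \cite[Chapter~6]{NP} organizes the proof. For the application in this paper (Lemma~\ref{LEM:uniq2} and its analogue in Appendix~\ref{SEC:A}) all components lie in $\H_2$, so your equal-order argument already covers everything needed.
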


In this paper, we only use the implication  (ii) $\LRA$ (iii)
but we included the  statement~(i) since it 
is the reason that Lemma \ref{LEM:FMT}
is called the fourth moment method.
We also point out that, while 
positive definiteness of the matrix $\Cs$ is imposed in 
some literature (for example, 
\cite[Proposition~1]{PT}), 
it suffices to assume
that $\Cs$ is 
positive semi-definite.

The fourth moment theorem 
 was first introduced in \cite{NP05}
 by Nualart and Peccati
 and was  further extended in \cite{NO08, NP09}, 
 which has  become a powerful tool in probability theory, 
particularly in applications to the study of scaling limits of disordered systems 
such as directed polymers, SPDEs, SDEs, 
and central limit theorems for Gaussian functionals; 
see, for example, \cite{CSZ17, NZ20, NPR10, HNVZ20, Hairer, GT25}.

\medskip

Lastly, 
we recall the Prokhorov theorem
and the Skorokhod representation theorem.

\begin{definition}\label{DEF:tight}\rm
Let $\J$ be any nonempty index set.
A family 
 $\{ \mu_j \}_{j \in \J}$ of probability measures
on a metric space $\M$ is said to be   tight
if, for every $\dl > 0$, there exists a compact set $K_\dl\subset \M$
such that $ \sup_{j\in \J}\mu_j(K_\dl^c) < \dl$. 
We say  that $\{ \mu_j \}_{j \in \J}$ is relatively compact, if every sequence 
in  $\{ \mu_j \}_{j \in \J}$ contains a weakly convergent subsequence.

We say that a family 
 $\{ X_j \}_{j \in \J}$ of random variables
 is tight if their laws are tight.

\end{definition}

Note that the index set $\J$ does not need to be countable.
We now 
recall the following 
Prokhorov theorem from \cite{PB71, Bass}.

\begin{lemma}[Prokhorov theorem]
\label{LEM:Pro}

If a sequence of probability measures 
on a metric space $\M$ is tight, then
it is relatively compact. 
If in addition, $\M$ is separable and complete, then relative compactness is 
equivalent to tightness.

\end{lemma}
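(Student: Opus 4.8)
The final statement is the classical Prokhorov theorem, which the paper simply cites from \cite{PB71, Bass}; the plan is to reconstruct the standard two-part argument, bearing in mind that ``relatively compact'' here means that every sequence of the $\mu_j$ admits a weakly convergent subsequence (Definition~\ref{DEF:tight}). For the \emph{direct part} (tightness $\Rightarrow$ relative compactness), given a tight sequence $\{\mu_n\}_{n\in\N}$ I would first choose compact sets $K_1\subset K_2\subset\cdots$ (made increasing by taking finite unions) with $\sup_n\mu_n(K_m^c)\le\tfrac1m$, so that every $\mu_n$ is concentrated on the $\sigma$-compact, hence separable, set $S_0=\bigcup_m K_m$. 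Embedding $\overline{S_0}$ homeomorphically into the Hilbert cube $Q=[0,1]^{\N}$ via a map $\iota$ and pushing the $\mu_n$ forward reduces matters to a family of Borel probability measures $\{\nu_n\}$ on the compact metrizable space $Q$. Since $C(Q)$ is separable, the closed unit ball of $C(Q)^*$ is weak-$*$ compact (Banach--Alaoglu) and weak-$*$ metrizable, so some subsequence $\nu_{n_k}$ converges weak-$*$ to a positive linear functional which, by the Riesz representation theorem and evaluation on the constant function $1$, is a Borel probability measure $\nu$ on $Q$. The portmanteau inequality $\nu(F)\ge\limsup_k\nu_{n_k}(F)$ applied to the closed sets $F=\iota(K_m)$, together with $\nu_{n_k}(\iota(K_m))\ge 1-\tfrac1m$, forces $\nu$ to be concentrated on $\iota(\bigcup_m K_m)$; hence $\nu$ corresponds to a probability measure $\mu$ on $\M$, and the same tightness bound upgrades weak-$*$ convergence on $Q$ to genuine weak convergence $\mu_{n_k}\to\mu$ on $\M$ (a bounded continuous $f$ on $\M$ is handled, up to an error $\le 2\|f\|_{\infty}\mu_{n_k}(K_m^c)$, via a continuous extension to $Q$ of $f|_{K_m}$). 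An equivalent, embedding-free route runs a Helly-type diagonal extraction over a countable family of sets generating the topology of $S_0$, again using tightness to verify that the resulting set function is a countably additive probability measure with no escape of mass.

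For the \emph{converse part} (on a separable complete $\M$, relative compactness $\Rightarrow$ tightness) I would argue scale by scale. Fix $\eps>0$; for each $k\in\N$ cover $\M$ by the balls $B(x_i,\tfrac1k)$, $i\in\N$, around a fixed countable dense sequence. If for some $k$ no finite subcollection satisfied $\sup_n\mu_n\big(\bigcup_{i\le N}B(x_i,\tfrac1k)\big)>1-\eps 2^{-k}$, then for every $N$ one could pick $\mu_{n_N}$ with $\mu_{n_N}\big(\bigcup_{i\le N}B(x_i,\tfrac1k)\big)\le 1-\eps 2^{-k}$; passing to a weakly convergent subsequence $\mu_{n_{N_l}}\to\nu$ and applying the portmanteau inequality for the open sets $\bigcup_{i\le M}B(x_i,\tfrac1k)$, then letting $M\to\infty$, would give $\nu(\M)\le 1-\eps 2^{-k}<1$, contradicting that $\nu$ is a probability measure. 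Hence for each $k$ there is $N_k$ with $\sup_n\mu_n\big(\bigcup_{i\le N_k}B(x_i,\tfrac1k)\big)>1-\eps 2^{-k}$, and then $K=\overline{\bigcap_k\bigcup_{i\le N_k}B(x_i,\tfrac1k)}$ is closed and totally bounded, hence compact since $\M$ is complete, with $\sup_n\mu_n(K^c)\le\sum_k\eps 2^{-k}=\eps$.

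The only genuinely delicate point is the direct part: producing the limit as an \emph{honest} probability measure on $\M$, rather than a sub-probability measure on the compact space $Q$ with some mass lost ``at infinity'', and then checking weak convergence against \emph{all} bounded continuous functions on $\M$. This is precisely where tightness --- through the exhausting compacts $K_m$ --- is indispensable; the remaining ingredients are soft topology and measure theory, and since the result is entirely standard we would, as in the paper, simply invoke \cite{PB71, Bass}.
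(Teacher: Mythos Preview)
The paper does not actually prove this lemma; it is stated as a classical result and cited from \cite{PB71, Bass}, with no argument given. Your reconstruction of the standard two-part proof (embedding into the Hilbert cube and using Banach--Alaoglu for the direct implication, and the scale-by-scale contradiction argument for the converse on a Polish space) is correct and is essentially the proof one finds in those references, so there is nothing to compare beyond noting that you have supplied what the paper deliberately omits.
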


Lastly, we recall  the following Skorokhod representation 
theorem from  
\cite[Chapter 31]{Bass}.

\begin{lemma}[Skorokhod representation theorem]\label{LEM:Sk}

Let $\M$ be a complete  
separable metric space \textup{(}i.e.~a Polish space\textup{)}.
Suppose that    
 probability measures 
 $\{\mu_n\}_{n\in\N}$
 on $\M$ converges  weakly   to a probability measure $\mu$
as $n \to\infty$.
Then, there exist a probability space $(\wt \O, \wt \sF, \wt\PP)$,
and random variables $X_n, X:\wt \O \to \M$ 
such that 
\begin{align*}
\Law( X_n) = \mu_n
\qquad \text{and}\qquad
\Law(X) = \mu ,
\end{align*}

\noi
and $X_n$ converges $\wt\PP$-almost surely to $X$ as $n\to\infty$.
Here, $\Law (X)$ denotes the law of a random variable $X$.

\end{lemma}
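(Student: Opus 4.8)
The plan is to give the classical quantile-coupling proof; in the paper itself one simply cites \cite[Chapter 31]{Bass}, but here is the argument I would write out. First I would treat the scalar case $\M = \R$. Take $(\wt\O, \wt\sF, \wt\PP) = ([0,1], \B([0,1]), \mathrm{Leb})$, let $U(\o) = \o$, and let $F_n$ and $F$ be the distribution functions of $\mu_n$ and $\mu$. Define the left inverses $F_n^{-1}(t) = \inf\{x \in \R : F_n(x) \ge t\}$ and $F^{-1}(t) = \inf\{x \in \R : F(x) \ge t\}$, and set $X_n = F_n^{-1}(U)$ and $X = F^{-1}(U)$. A routine check gives $\Law(X_n) = \mu_n$ and $\Law(X) = \mu$. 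Since weak convergence $\mu_n \to \mu$ is equivalent to $F_n(x) \to F(x)$ at every continuity point of $F$, a short monotonicity argument upgrades this to $F_n^{-1}(t) \to F^{-1}(t)$ at every continuity point of $F^{-1}$; as $F^{-1}$ is nondecreasing it has at most countably many discontinuities, so $X_n \to X$ Lebesgue-almost everywhere, which is the claim.

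For a general Polish space $\M$ I would reduce to the scalar construction through a refining sequence of countable Borel partitions with $\mu$-negligible boundaries. Using separability, for each $k \in \N$ cover $\M$ by countably many balls of radius at most $2^{-k}$, choosing for each centre a radius that avoids the at-most-countable set of $r$ with $\mu(\{x : d(x,\text{centre}) = r\}) > 0$; disjointifying and intersecting across scales yields Borel partitions $\{B_{k,j}\}_j$ of $\M$ with $\mathrm{diam}(B_{k,j}) \le 2^{-k}$, $\mu(\partial B_{k,j}) = 0$, and $\{B_{k+1,j}\}_j$ a refinement of $\{B_{k,j}\}_j$. By the portmanteau theorem, $\mu_n(B_{k,j}) \to \mu(B_{k,j})$ for all $k,j$. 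On $([0,1], \mathrm{Leb})$, for each $k$ partition $[0,1]$ into nested intervals of lengths $\{\mu(B_{k,j})\}_j$ and fix a point $x_{k,j} \in B_{k,j}$; for each $n$ do the same with the lengths $\{\mu_n(B_{k,j})\}_j$ and points $x^{(n)}_{k,j} \in B_{k,j}$. Because the partitions are nested and the cell diameters shrink to $0$, the map sending $\o$ to the limit, as $k \to \infty$, of the representative point of the level-$k$ cell containing $\o$ defines $X$, and likewise $X_n$, with the prescribed laws; and since the endpoints of the $\mu_n$-intervals converge to those of the $\mu$-intervals, for Lebesgue-a.e.\ $\o$ and each fixed $k$ there is $N(\o,k)$ so that $\o$ lies in matching cells for all $n \ge N(\o,k)$, whence $d(X_n(\o), X(\o)) \les 2^{-k}$; letting $k \to \infty$ gives $X_n \to X$ almost surely.

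The scalar step is entirely routine, and the only genuinely delicate point is the Polish-space coupling: one must produce the refining partitions with $\mu$-null boundaries (separability together with the fact that a finite measure charges only countably many sphere radii), keep them nested so that the chosen representatives form Cauchy sequences whose limits are measurable and carry the right laws, and --- the crux --- control the drift of the $\mu_n$-cells relative to the $\mu$-cells so as to obtain almost sure rather than merely in-probability convergence. Since this is a textbook fact, I would not reproduce it in the paper and would simply cite \cite[Chapter 31]{Bass}.
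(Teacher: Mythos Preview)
Your proposal is correct and matches the paper's approach exactly: the paper does not prove this lemma but simply recalls it with a citation to \cite[Chapter 31]{Bass}, which is precisely what you conclude you would do. The sketch you provide of the quantile-coupling and partition-refinement argument is a standard and accurate outline of the textbook proof, but it goes beyond what the paper includes.
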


\begin{remark}\label{REM:Sk2} \rm 
Recall that 
 the space of continuous functions from 
a  separable metric space 
 to another separable metric space 
 with the compact-open  topology (in time) is separable; see \cite{Mi}.
 See also the paper \cite[Corollary 3.3]{Khan86}. 
In particular, 
in view of our definition of $W^{s, \infty}(\T)$ which makes the space separable, 
we see that $C(\R_+; W^{s, \infty}(\T))$ endowed with the compact-open 
topology is separable.

\end{remark}

\section{Convergence of the second Picard iterates}
\label{SEC:Z}

In this section, we present a proof of Theorem \ref{THM:1}
on convergence in law of the second Picard iterate $Z_N$ in~\eqref{I3} to $Z$ in \eqref{lin2}.
In Subsection \ref{SUBSEC:Z1}, 
we prove uniqueness of a possible limit
by using the fourth moment theorem (Lemma \ref{LEM:FMT}).
In Subsection \ref{SUBSEC:Z2}, 
 we then establish tightness of 
the sequence $\{Z_N\}_{N \in \N}$  in 
 $C(\R_+;W^{s,\infty}(\T))$
 and prove 
Theorem \ref{THM:1}.

\subsection{Uniqueness of the limit}
\label{SUBSEC:Z1}

Our main goal in this subsection is 
to establish the following proposition
on convergence of finite-dimensional marginals (in time)
of $Z_N$, which shows the uniqueness of the limit
of $\{Z_N \}_{N \in \N}$ (if it exists).

\begin{proposition}\label{PROP:uniq}
Let $\al\le \frac 14$.
Let $Z_N$ and $Z$ be as in \eqref{I3} and \eqref{lin2}, respectively.
Given $m \in \N$, let 
 $0\le t_1\le  \cdots \le t_m$.
Then, 
$\big\{Z_N(t_j)\big\}_{j = 1}^m$
converges in law
to 
$\big\{Z(t_j)\big\}_{j = 1}^m$
in $(\D'(\T))^{\otimes m}$
as $N \to \infty$.

\end{proposition}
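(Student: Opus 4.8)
The plan is to use Lemma~\ref{LEM:BDW} to reduce the convergence in $(\D'(\T))^{\otimes m}$ to the convergence in law of the $\R^m$-valued random variables $\big\langle\bigotimes_{j=1}^m Z_N(t_j), \bigotimes_{j=1}^m \psi_j\big\rangle_m = \big(\jb{Z_N(t_1),\psi_1},\dots,\jb{Z_N(t_m),\psi_m}\big)$ for each fixed $(\psi_1,\dots,\psi_m)\in(\D(\T))^{\otimes m}$. Since each $\jb{Z_N(t_j),\psi_j}$ lies in the second homogeneous Wiener chaos $\H_2$ (being a quadratic functional of the Gaussians $\{g_n\}$ through $z_N^2$; see \eqref{I3} and \eqref{lin1}), we may write $\jb{Z_N(t_j),\psi_j}=I_2(f^j_N)$ for an explicit symmetric kernel $f^j_N\in L^2_{\textup{sym}}(\T^2)$ obtained by expanding \eqref{I3}. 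The first step is therefore to compute this kernel on the Fourier side: one has $\jb{Z_N(t),\psi}=\sum_{n_1,n_2} c_N(t,n_1,n_2)\,\widehat\psi(-n_1-n_2)\,g_{n_1}g_{n_2}$, where $c_N(t,n_1,n_2)$ carries the factor $C_{\al,N}^2\jb{n_1}^{-\al}\jb{n_2}^{-\al}\ind_{|n_1|,|n_2|\le N}$, the multiplier $\vp(n_1+n_2)$, and the time integral $\int_0^t e^{(t-t')\vp(n_1+n_2)}e^{t'\vp(n_1)}e^{t'\vp(n_2)}\,dt'$, after subtracting the (harmless, since $\vp(0)=0$) diagonal contribution.

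The second step is to verify the hypotheses of the fourth moment theorem (Lemma~\ref{LEM:FMT}) for the random vector $F_N=(I_2(f^1_N),\dots,I_2(f^m_N))$. This has two parts. First I would compute the limiting covariance matrix $\Cs_{ij}=\lim_{N\to\infty}\E[\jb{Z_N(t_i),\psi_i}\jb{Z_N(t_j),\psi_j}]$; using \eqref{mom1}--\eqref{mom2}, $\E[I_2(f^i_N)\overline{I_2(f^j_N)}]$ reduces to a double sum $\sum_{n_1,n_2}c_N(t_i,n_1,n_2)\overline{c_N(t_j,n_1,n_2)}$-type expression, and the normalization $C_{\al,N}^4=\big(\sum_{|n|\le N}2\jb{n}^{-4\al}\big)^{-1}$ in \eqref{CN} is precisely tuned so that, as $N\to\infty$, only the "resonant" or near-diagonal part of the sum survives and produces a finite limit; one should check this matches the covariance of $\big(\jb{Z(t_i),\psi_i}\big)_i$ computed from \eqref{lin2} with the spatial white noise \eqref{I3b} — this is essentially the same computation already implicit in the heuristics around \eqref{FO1} and \eqref{CN}, and it is the place where one sees the variance-blowup divergence being exactly cancelled by $C_{\al,N}^4$. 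Second, I would verify the contraction condition Lemma~\ref{LEM:FMT}(ii): since $k_i=2$, only $r=1$ is relevant, and one must show $\|f^i_N\otimes_1 f^i_N\|_{L^2(\T^2)}\to 0$. On the Fourier side the contraction $f^i_N\otimes_1 f^i_N$ has coefficients of the form $\sum_{k}c_N(t_i,n_1,k)\overline{c_N(t_i,k,n_2)}$ (up to symmetrization and conjugation bookkeeping), and its $L^2$ norm is a quadruple sum in $(n_1,n_2,k,k')$; the key point is that two of the four frequency variables are forced to be "off-diagonal", so each internal summation gains a factor controlled by Lemma~\ref{LEM:SUM}, and together with four factors of $C_{\al,N}\to 0$ and only, roughly, the equivalent of two compensating diverging sums, the whole expression is bounded by $C_{\al,N}^{?}\cdot(\text{mild logarithmic/polynomial gain})\to 0$. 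Concluding Lemma~\ref{LEM:FMT} then yields that $F_N$ converges in law to the centered Gaussian vector $\NN(0,\Cs)$, which we identify with $\big(\jb{Z(t_j),\psi_j}\big)_{j=1}^m$; applying Lemma~\ref{LEM:BDW} finishes the proof.

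The main obstacle I anticipate is the contraction estimate in Lemma~\ref{LEM:FMT}(ii): one must carefully organize the quadruple frequency sum defining $\|f^i_N\otimes_1 f^i_N\|_{L^2}^2$, keep track of which summations are "diagonal-like" (contributing a diverging factor that must be matched against powers of $C_{\al,N}$) versus "genuinely off-diagonal" (contributing a gain via Lemma~\ref{LEM:SUM} and the decay of $\widehat\psi$), and show the net power of $C_{\al,N}$ strictly beats the net divergence, uniformly down to arbitrarily small $\al$. The bookkeeping is delicate precisely because $\al\le\tfrac14$ — including $\al$ very negative — so the individual sums $\sum_{|n|\le N}\jb{n}^{-2\al}$, $\sum_{|n|\le N}\jb{n}^{-4\al}$ diverge polynomially and one cannot afford to be lossy. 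A secondary, more routine point is justifying that the diagonal ($n_1+n_2=0$, equivalently $\vp(n_1+n_2)=0$) contribution can be discarded: here I would note that $\vp(D)$ in \eqref{I3} kills the zero mode, so $Z_N$ has no zero Fourier mode and this issue does not even arise in the definition — the real content is only the non-resonant analysis above.
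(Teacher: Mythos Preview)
Your proposal is correct and follows essentially the same route as the paper: reduce via Lemma~\ref{LEM:BDW}, write $\jb{Z_N(t_j),\psi_j}=I_2(f^j_N)$, verify the covariance convergence (the paper's Lemma~\ref{LEM:COV}), and then check the contraction condition $\|f^j_N\otimes_1 f^j_N\|_{L^2(\T^2)}\to 0$ to invoke the fourth moment theorem. One small correction on the contraction estimate: the paper does not use Lemma~\ref{LEM:SUM} there but instead exploits the rapid decay of $\widehat\psi$ to localize $|n_j+m|\ll N^\theta$, then splits into three frequency regimes (large $|n_j+m|$; all frequencies $\lesssim N^\theta$; all frequencies $\gg N^\theta$ with $|n_1|\sim|n_2|\sim|m|$) to show each contributes $o(1)$ against the eight powers of $C_{\al,N}$.
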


As a preliminary step, 
we study the covariance (in time)
of $Z_N(t)$ acting on  test functions.

\begin{lemma}
\label{LEM:COV}

Let $\al \le \frac 14$. 
Given $N \in \N$, 
let  $Z_N$ and $Z$ be  as in \eqref{I3} and \eqref{lin2}, respectively.
Then, 
given any test function $\psi \in \D(\T)$, we have 
\begin{align}
\E\big[\jb{Z_N (t), \psi}\big] = \E\big[\jb{Z (t), \psi}\big] =0
\label{con1}
\end{align}

\noi
for any $t \ge 0$ and $N \in \N$, 
and 
\begin{align}
\lim_{N\to \infty} 
\E\Big[\jb{Z_N (r), \psi_1} \jb{ Z_N (t), \psi_2}\Big]
= 
\E\Big[\jb{ Z (r), \psi_1}\jb{ Z (t), \psi_2}\Big]
\label{con2}
\end{align}

\noi
\noi
for any test functions $\psi_1, \psi_2 \in \D(\T)$ and $t \ge r \ge 0$, 
where 
$\jb{f, g}$ 
denotes the $\D'(\T)$-$\D(\T)$ duality pairing.


\end{lemma}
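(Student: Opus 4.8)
\textbf{Proof proposal for Lemma \ref{LEM:COV}.}

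The plan is to compute everything explicitly on the Fourier side, where both $Z_N$ and $Z$ are Gaussian with fully computable covariances, and then pass to the limit termwise. First I would record the Fourier representation of $Z_N(t)$. Writing $z_N(t) = \sum_{|n| \le N} e^{t\vp(n)} C_{\al, N} \jb{n}^{-\al} g_n e_n$, squaring, and applying $\vp(D)$ and the Duhamel operator $\I$, one gets
\begin{align}
\ft{Z_N}(t, n)
= C_{\al, N}^2 \vp(n) \sum_{\substack{n_1 + n_2 = n \\ |n_1|, |n_2| \le N}}
\frac{g_{n_1} g_{n_2}}{\jb{n_1}^\al \jb{n_2}^\al}
\int_0^t e^{(t - t')\vp(n)} e^{t'(\vp(n_1) + \vp(n_2))} dt'.
\label{plan:Zhat}
\end{align}
The time integral is elementary: it equals $t e^{t\vp(n)}$ when $\vp(n_1) + \vp(n_2) = \vp(n)$ and otherwise equals $\big(e^{t(\vp(n_1)+\vp(n_2))} - e^{t\vp(n)}\big)/(\vp(n_1) + \vp(n_2) - \vp(n))$; denote this kernel by $K_t(n_1, n_2)$. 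Since each summand lies in the second Wiener chaos $\H_2$ (or $\H_{\le 2}$, accounting for $n_1 = -n_2$ where $g_{n_1}g_{n_2}$ has a nonzero mean that is killed by the $\vp(n)$ prefactor with $n=0$ giving $\vp(0)=0$), we have $\E[\ft{Z_N}(t,n)] = 0$, which upon pairing against $\psi$ gives \eqref{con1}; the identity $\E[\jb{Z(t),\psi}] = 0$ is immediate since $Z$ is a centered Gaussian process from \eqref{lin2}.

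For \eqref{con2} I would expand $\jb{Z_N(r), \psi_1} = \sum_n \ft{Z_N}(r, n) \cj{\ft\psi_1(n)}$ (using that $\psi_j$ is real so $\cj{\ft{\psi_j}(n)} = \ft{\psi_j}(-n)$), and similarly for $\jb{Z_N(t), \psi_2}$, then take the expectation. Using the Wick/Isserlis formula for fourth moments of complex Gaussians together with \eqref{mom1}, the only surviving contractions pair one frequency from the $r$-factor with one from the $t$-factor, producing
\begin{align}
\E\big[\jb{Z_N(r), \psi_1}\jb{Z_N(t), \psi_2}\big]
= 2 C_{\al, N}^4 \sum_{n} \vp(n)^2 \ft\psi_1(-n) \ft\psi_2(n)
\sum_{\substack{n_1 + n_2 = n \\ |n_1|, |n_2| \le N}}
\frac{K_r(n_1, n_2) \cj{K_t(n_1, n_2)}}{\jb{n_1}^{2\al} \jb{n_2}^{2\al}}.
\label{plan:cov}
\end{align}
(The factor $2$ and the symmetry in $n_1 \leftrightarrow n_2$ should be tracked carefully.) On the other hand, from \eqref{lin2}, $\ft Z(t, n) = -\vp(n) \gf_n \int_0^t e^{(t-t')\vp(n)} dt'$, and since $\vp(n)$ is purely imaginary the time integral has modulus controlled, so $\E[\jb{Z(r),\psi_1}\jb{Z(t),\psi_2}]$ is an explicit single sum over $n$ of $\vp(n)^2 \ft\psi_1(-n)\ft\psi_2(n)$ times a bounded kernel in $(r,t,n)$. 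The crux is therefore to show that for each fixed $n$,
\begin{align}
2 C_{\al, N}^4 \sum_{\substack{n_1 + n_2 = n \\ |n_1|, |n_2| \le N}}
\frac{K_r(n_1, n_2) \cj{K_t(n_1, n_2)}}{\jb{n_1}^{2\al} \jb{n_2}^{2\al}}
\longrightarrow (\text{the } Z\text{-kernel at frequency } n)
\label{plan:key}
\end{align}
as $N \to \infty$, and then to invoke dominated convergence in the outer sum over $n$ (legitimate because $\psi_j \in \D(\T)$ makes $\ft\psi_j$ rapidly decaying, and the inner sums are bounded uniformly in $N$ by the very choice of $C_{\al, N}$ in \eqref{CN}).

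The main obstacle — and the heart of the lemma — is establishing \eqref{plan:key}. The point is that $C_{\al, N}^4 = \big(\sum_{|m| \le N} 2\jb{m}^{-4\al}\big)^{-1}$ is precisely the reciprocal of the diverging normalization, so $C_{\al, N}^4 \sum_{n_1 + n_2 = n, |n_i| \le N} \jb{n_1}^{-2\al}\jb{n_2}^{-2\al}$ behaves like a weighted average of the oscillatory kernel $K_r \cj{K_t}$ over the region $|n_1| \le N$ (with $n_2 = n - n_1$). As $N \to \infty$, the weights concentrate on large $|n_1|$, where $\vp(n_1) \to 0$ and $\vp(n_2) = \vp(n - n_1) \to 0$, so the phase $\vp(n_1) + \vp(n_2) - \vp(n) \to -\vp(n)$; hence $K_r(n_1, n_2) \to \big(1 - e^{r\vp(n)}\big)/(-\vp(n)) \cdot e^{0} $ — more precisely one checks $K_r(n_1,n_2) \to \int_0^r e^{(r - r')\vp(n)}\,dr'$, which is exactly the time kernel appearing in $\ft Z(r, n)$. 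Thus the pointwise limit of the weighted average is just this deterministic limit of $K_r \cj{K_t}$, times $\lim_N C_{\al,N}^4 \sum 2\jb{n_1}^{-2\al}\jb{n_2}^{-2\al}$; and here the renormalization is calibrated so that the asymptotics $\jb{n_2}^{-2\al} = \jb{n - n_1}^{-2\al} \sim \jb{n_1}^{-2\al}$ for $|n_1| \gg |n|$ makes this last limit equal $1$. The careful estimates needed are: (a) quantifying the error $|K_r(n_1,n_2) - \int_0^r e^{(r-r')\vp(n)}dr'| \lesssim |\vp(n_1)| + |\vp(n_2)| \lesssim \jb{n_1}^{-1} + \jb{n-n_1}^{-1}$ and showing the contribution of this error, weighted and summed, vanishes (using Lemma \ref{LEM:SUM} to control $C_{\al,N}^4 \sum \jb{n_1}^{-2\al - 1}\jb{n_2}^{-2\al}$, which is summable-small since $2\al + 1 + 2\al > 1$); (b) handling the bounded-$|n_1|$ and bounded-$|n_2|$ exceptional regions, whose total weight is $O(C_{\al,N}^4 \cdot \text{(finite or }\log N\text{)}) \to 0$; and (c) justifying the uniform-in-$N$ domination for the outer sum over $n$, which follows from $|\vp(n)| \le \jb{n}^{-1}$ and the rapid decay of $\ft\psi_1, \ft\psi_2$. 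I expect (a) to require the most bookkeeping, but it is a routine application of Lemma \ref{LEM:SUM} once the kernel expansion is in place.
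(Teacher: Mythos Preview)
Your approach is essentially the same as the paper's: compute $\ft{Z_N}(t,n)$ explicitly, use Wick's formula to reduce the covariance to a single sum over $n$ with an inner sum over $(n_1,n_2)$, show the inner sum (properly normalized by $C_{\al,N}^4$) converges to the $Z$-kernel, and pass to the limit termwise using the rapid decay of $\ft\psi_j$. The paper implements the inner-sum analysis by Taylor-expanding $e^{(t_1-t_2)(\vp(n_1)+\vp(n_2))}$ inside a double time integral, whereas you phrase it as pointwise convergence of the kernel $K_r(n_1,n_2)$; these are the same computation in slightly different packaging.

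One point to watch: your justification of (a) via Lemma~\ref{LEM:SUM} requires $\be+\g = (2\al+1)+2\al > 1$, i.e.\ $\al>0$, so as written it does not cover $\al\le 0$. The paper handles this by splitting into the cases $0<\al\le\tfrac14$ and $\al\le 0$; in the latter it localizes to the region where $\min(|n_1|,|n_2|)\gg N^{1-\eps}$, on which $|\vp(n_j)|\les N^{-(1-\eps)}$ gives the needed decay, and shows the complement carries negligible $C_{\al,N}^4$-weight. Your error term can be bounded directly for $\al\le 0$ as well (e.g.\ $C_{\al,N}^4\sum_{|n_1|\le N}\jb{n_1}^{-2\al-1}\jb{n-n_1}^{-2\al}\les N^{4\al-1}\cdot N^{-2\al}\cdot N^{-2\al}=N^{-1}$), so this is a fixable detail rather than a genuine gap, but you should flag the case split.
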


\begin{proof}

From \eqref{I3} with \eqref{lin1}, we have  
\begin{align}
\begin{aligned}
\ft Z_N (t,n)
& =  
\sum_{n= n_1 + n_2} 
\int^t_0 
e^{(t-t')\vp(n)}\vp(n)
\ft z_N (t', n_1) \ft z_N (t', n_2) dt'\\
& =   C_{\al, N}^2
 e^{t \vp(n)}
 \sum_{\substack{n= n_1 + n_2 \\ |n_1|, |n_2|\leq N}}
 \int^t_0
 e^{- t' (\vp(n) -\vp(n_1) -\vp(n_2))} 
 \vp(n)
\frac{g_{n_1} g_{n_2}}{\jb{n_1}^\al \jb{n_2}^{\al}}
 dt'.
\end{aligned}
\label{con3}
\end{align}

\noi
Recalling that $\vp(n)|_{n = 0} = 0$, 
we have $\ft Z_N(t, 0) =  0$
for any $t \in \R_+$
(when $n = 0$).
Then, from~\eqref{con3},  we have
\begin{align}
\begin{split}
\jb{ Z_N (t), \psi}
& = \sum_{n \in \Z^*}
\ft Z_N (t,n) \cj{\ft \psi(n)}\\
& =   C_{\al, N}^2
\sum_{n \in \Z^*}
 e^{t \vp(n)} \cj{\ft \psi(n)}\\
& \quad \times  \sum_{\substack{n= n_1 + n_2 \\ |n_1|, |n_2|\leq N}}
 \int^t_0
 e^{- t' (\vp(n) -\vp(n_1) -\vp(n_2))} 
 \vp(n)
\frac{g_{n_1} g_{n_2}}{\jb{n_1}^\al \jb{n_2}^{\al}}
 dt'.
 \end{split}
\label{con3a}
\end{align}

\noi
It follows from \eqref{mom1} that for $n  \ne 0$,
 we have 
$\E[g_{n_1} g_{n_2}]  = 0$
for $n_1, n_2 \in \Z$, satisfying $n = n_1 + n_2$.
Hence, 
 \eqref{con1} for $Z_N$ follows from \eqref{con3a}.
Note that 
 \eqref{con1}  for $Z$ follows easily from \eqref{lin2}
with \eqref{I3b}.

From \eqref{con3a} with $\cj{\vp(n)} = - \vp(n)$, we have 
\begin{align}
\begin{aligned}
\E& \Big[\jb{Z_N (r), \psi_1} \jb{ Z_N (t), \psi_2}\Big]
= \E \Big[\jb{Z_N (r), \psi_1}\cj{ \jb{ Z_N (t), \psi_2}}\Big]\\
& = \sum_{n, m\in \Z^*}
   \E    \Big[\ft Z_N (r, n)\cj{\ft Z_N (t, m)}\Big]
\cj{\ft \psi_1(n)} \ft \psi_2(m)\\   
&  =  C_{\al, N}^4 
\sum_{n, m\in \Z^*}
\cj{\ft \psi_1(n)} \ft \psi_2(m)
\int^r_0 e^{(r-t_1) \vp(n)} 
\vp(n)
\int^t_0 e^{- (t-t_2)\vp(m)}
\cj{\vp(m)}
\\
& \quad \times 
\sum_{\substack{n= n_1+n_2\\|n_1|, |n_2|\leq N}} 
\sum_{\substack{m= m_1+m_2\\|m_1|, |m_2|\leq N}} 
\frac{e^{t_1(\vp(n_1)+ \vp(n_2))} }{\jb{n_1}^\al \jb{n_2}^{\al}}
\frac{e^{-t_2(\vp(m_1)+ \vp(m_2))} }{\jb{m_1}^\al \jb{m_2}^{\al}}  \\
& \hphantom{XXXXXXXXXXX} \times 
\E\big[g_{n_1} g_{n_2} \cj{g_{m_1} g_{m_2}}\big] dt_1 dt_2.
\end{aligned}
\label{con4}
\end{align}

\noi
Since $n_1 + n_2, m_1 + m_2 \ne 0$, 
we see 
from Wick's theorem \cite[Proposition I.2]{Simon}
with \eqref{mom1}
that 
$\E[g_{n_1} g_{n_2} \cj{g_{m_1} g_{m_2}}]$
is non-zero only if
$(n_1, n_2)  = (m_1, m_2)$ or $(m_2, m_1)$,
which in particular implies
$n = m$.
Moreover,  by separately considering the cases
$n_1 \ne n_2$ and $n_1 = n_2 = \frac n2$ (which occurs
only when $n$ is even) with 
\eqref{mom2}, we have 
\begin{align}
\sum_{\substack{n= m_1+m_2\\|m_1|, |m_2|\leq N}} 
\E\big[g_{n_1} g_{n_2} \cj{g_{m_1} g_{m_2}}\big]
= 2
\label{con5}
\end{align}

\noi
for any  $n_1, n_2 \in \Z$ with $|n_1|, |n_2|\le N$
and $n = n_1 + n_2 \ne 0$.
Thus, from \eqref{con4} and \eqref{con5}, we have 
\begin{align}
\begin{aligned}
\E& \Big[\jb{Z_N (r), \psi_1} \jb{ Z_N (t), \psi_2}\Big]\\
&  =  2C_{\al, N}^4
 \sum_{n \in \Z^*}
\cj{\ft \psi_1(n)} \ft \psi_2(n) 
|\vp(n)|^2
\int^r_0 e^{(r-t_1) \vp(n)} \int^t_0 e^{- (t-t_2)\vp(n)}\\
& \quad \times 
\sum_{\substack{n= n_1+n_2\\|n_1|, |n_2|\leq N}} 
\frac{e^{(t_1- t_2)(\vp(n_1)+ \vp(n_2))} }{\jb{n_1}^{2\al} \jb{n_2}^{2\al}}
 dt_1 dt_2.
\end{aligned}
\label{con6}
\end{align}

In the following, we first consider the case $0 < \al \le \frac 14$.
We then briefly discuss the case $ \al \le 0$.

\smallskip

\noi
{\bf $\bullet$ Case 1:} $0 < \al \le \frac 14$.
\\
\indent
By the Taylor expansion, we have 
\begin{align}
\begin{split}
&  \sum_{\substack{n= n_1+n_2\\|n_1|, |n_2|\leq N}} 
\frac{e^{(t_1- t_2)(\vp(n_1)+ \vp(n_2))} }{\jb{n_1}^{2\al} \jb{n_2}^{2\al}}\\
& \quad = \sum_{\substack{n= n_1+n_2\\|n_1|, |n_2|\leq N}} 
\frac{1 }{\jb{n_1}^{2\al} \jb{n_2}^{2\al}}\\
&  \quad\quad  
+ O\bigg(\sum_{k = 1}^\infty
\frac{|t_1- t_2|^k}{k!}
\sum_{\substack{n= n_1+n_2\\|n_1|, |n_2|\leq N}} 
\frac{ |\vp(n_1)+ \vp(n_2)|^k} {\jb{n_1}^{2\al} \jb{n_2}^{2\al}}\bigg).
\end{split}
\label{con7}
\end{align}

\noi
Let us first consider the summand in the $k$-summation 
on the right-hand side.
From \eqref{phi2}, 
we see that 
 $|\vp(n)|\le \frac 12$
 and 
 that 
$|\vp(n)|$ is decreasing for $n \in \N$.
Then, 
we have 
\begin{align}
\begin{split}
\sum_{\substack{n= n_1+n_2\\|n_1|, |n_2|\leq N}} 
\frac{ |\vp(n_1)+ \vp(n_2)|^k} {\jb{n_1}^{2\al} \jb{n_2}^{2\al}}
& \le \sum_{\substack{n= n_1+n_2\\|n_1|, |n_2|\leq N}} 
\frac{ |\vp(n_1)+ \vp(n_2)|} {\jb{n_1}^{2\al} \jb{n_2}^{2\al}}\\
& \les
\sum_{\substack{n= n_1+n_2\\|n_1|\le |n_2|\leq N}} 
\frac{1} {\jb{n_1}^{2\al+1} \jb{n_2}^{2\al}}\\
& \le \sum_{|n_1|\le N}
\frac{ 1} { \jb{n_1}^{4\al+1}} \sim 1, 
\end{split}
\label{con8}
\end{align}

\noi
uniformly in $k \in \N$ and $N \gg 1$.
Then, from \eqref{con7} and \eqref{con8}, 
we have 
\begin{align}
  \sum_{\substack{n= n_1+n_2\\|n_1|, |n_2|\leq N}} 
\frac{e^{(t_1- t_2)(\vp(n_1)+ \vp(n_2))} }{\jb{n_1}^{2\al} \jb{n_2}^{2\al}}
 = \sum_{\substack{n= n_1+n_2\\|n_1|, |n_2|\leq N}} 
\frac{1 }{\jb{n_1}^{2\al} \jb{n_2}^{2\al}}
+ O(e^{c t})
\label{con9}
\end{align}

\noi
as $N \to \infty$, 
where we used the fact that $|t_1 - t_2| \le t$
for $0 \le t_1 \le r$ and $0 \le t_2 \le t$ with $r\le t$.
Hence, 
from \eqref{con6}, \eqref{con9}, and \eqref{CN1}, 
we obtain
\begin{align}
\begin{aligned}
& \lim_{N \to \infty} \E \Big[\jb{Z_N (r), \psi_1} \jb{ Z_N (t), \psi_2}\Big]\\
& \quad  = c_\al 
 \sum_{n \in \Z^*}
\cj{\ft \psi_1(n)} \ft \psi_2(n) 
|\vp(n)|^2
\int^r_0 e^{(r-t_1) \vp(n)}
 dt_1
\int^t_0 e^{- (t-t_2)\vp(n)}
 dt_2, 
\end{aligned}
\label{con10}
\end{align}

\noi
where $c_\al > 0$ is defined by 
\begin{align}
c_\al 
= \lim_{N\to \infty} 
 C_{\al, N}^4 \sum_{\substack{n=n_1+n_2\\ |n_1|,|n_2|\le N }} 
\frac{2}{\jb{n_1}^{2\al} \jb{n_2}^{2\al}} .
\label{con11}
\end{align}

\noi
While it 
may seem that $c_\al$ depends on $n \in \Z$, 
we point out  that $c_\al$ is independent of $n \in \Z$.
Moreover, we claim that 
\begin{align}
c_\al = 1.
\label{AA5}
\end{align}

\noi
See Remark \ref{REM:CA1} below.

On the other hand, from \eqref{lin2} with \eqref{I3b}, 
$\E[ \gf_n \cj \gf_m] = \ind_{n = m}$, 
and  $\cj{\vp(n)} = - \vp(n)$,  we have 
\begin{align}
\begin{split}
\E& \Big[\jb{ Z (r), \psi_1}\jb{ Z (t), \psi_2}\Big]\\
&   =  
   \sum_{n \in \Z^*}
\cj{\ft \psi_1(n)} \ft \psi_2(n) 
   |\vp(n)|^2
\int^r_0 e^{(r-t_1) \vp(n)}
 dt_1
\int^t_0 e^{- (t-t_2)\vp(n)}
 dt_2.
 \end{split}
\label{con12}
\end{align}

\noi
Therefore, 
by comparing \eqref{con10} and \eqref{con12}, 
we obtain
 \eqref{con2}
when $0 < \al \le \frac 14$.

\begin{remark}\label{REM:CA1}
\rm

Let  $0 < \al \le \frac 14$.
The definition \eqref{con11}
 of  $c_\al  > 0 $ a priori depends on $n\in \Z$
but we show that $c_\al$ is in fact independent of $n \in \Z$.

We first consider the case
  $0 < \al <  \frac 14$.
Note that the contribution 
to the summation in~\eqref{con11} 
from $\min(|n_1|, |n_2|) \les N^{1-\eps}$
vanishes in the limit. 
Indeed, from~\eqref{CN}, we have 
\begin{align}
\begin{split}
& \lim_{N\to \infty} 
C_{\al, N}^4 
\sum_{\substack{n=n_1+n_2\\ |n_1|,|n_2|\le N\\ \min(|n_1|, |n_2|) \les N^{1-\eps} }} 
\frac{2}{\jb{n_1}^{2\al} \jb{n_2}^{2\al}} \\
& \quad \les 
\lim_{N\to \infty} 
C_{\al, N}^4 
\sum_{|k| \les N^{1-\eps}} \frac 1{\jb{k}^{4\al}}
\too 0,  
\end{split}
\label{AA1}
\end{align}

\noi
as $N \to \infty$.
Next, we consider the contribution 
from $\min(|n_1|, |n_2|) \gg N^{1-\eps}$.
Since $n \in \Z$ is fixed, 
given any small $\ta > 0$, there exists $N_0 = N_0(n, \al, \ta)\in \N$ such that 
\begin{align}
1 - \ta \le \frac{\jb{n_2}^{2\al}}{\jb{n_1}^{2\al}}
= \frac{\jb{n - n_1 }^{2\al}}{\jb{n_1}^{2\al}} \le 1 + \ta 
\label{AA2}
\end{align}

\noi
for any $N \ge N_0$
and $n_1, n_2 \in \Z$ with  $\min(|n_1|, |n_2|) \gg N^{1-\eps}$.
Then, from \eqref{con11} with \eqref{AA1}, \eqref{AA2}, 
and \eqref{CN}, we obtain
\begin{align}
\begin{split}
c_\al 
& = \lim_{N\to \infty} 
C_{\al, N}^4 \sum_{\substack{n=n_1+n_2 \\ N^{1-\eps} \ll |n_1|,|n_2|\le N }} 
\frac{2}{\jb{n_1}^{2\al} \jb{n_2}^{2\al}} \\
& \le \frac 1{1-\ta}
\lim_{N\to \infty} 
C_{\al, N}^4 \sum_{|n_1|\le N }
\frac{2}{\jb{n_1}^{4\al}}
\le \frac 1{1-\ta}
\end{split}
\label{AA3}
\end{align}

\noi
and, similarly, 
\begin{align}
c_\al 
& \ge \frac 1{1+\ta}
\lim_{N\to \infty} 
C_{\al, N}^4 \sum_{|n_1|\le N }
\frac{2}{\jb{n_1}^{4\al}}
\ge \frac 1{1+\ta}.
\label{AA4}
\end{align}

\noi
Since the choice of small $\ta > 0$ was arbitrary, 
the identity \eqref{AA5} follows from \eqref{AA3} and~\eqref{AA4}.

When $\al = \frac 14$, 
we
note  that the contribution 
to the summation in~\eqref{con11} 
from $\min(|n_1|, |n_2|) \les \log N$
vanishes in the limit. 
Then, by arguing as in  \eqref{AA3} and \eqref{AA4}, 
we obtain \eqref{AA5} in this case as well.

\end{remark}

\smallskip

\noi
{\bf $\bullet$ Case 2:} $ \al \le 0$.
\\
\indent
We first estimate the contribution 
from $k = 1$ in the second term on the right-hand side of~\eqref{con7}.
In this case, we consider 
\begin{align}
C_{\al, N}^4 \sum_{\substack{n= n_1+n_2\\|n_1|, |n_2|\leq N}} 
\frac{ |\vp(n_1)+ \vp(n_2)|} {\jb{n_1}^{2\al} \jb{n_2}^{2\al}}
\label{AB1}
\end{align}

\noi
instead of \eqref{con8}.
Without loss of generality, assume $|n_1| \le |n_2|$.
The contribution to \eqref{AB1}
from  $ |n_2| \les N^{1-\eps}$
is bounded by 
\begin{align}
\begin{split}
& \le C_{\al, N}^4 \sum_{\substack{n= n_1+n_2\\|n_1| \le  |n_2|\les N^{1-\eps}}} 
\frac{1} {\jb{n_1}^{2\al} \jb{n_2}^{2\al}}
\les 
C_{\al, N}^4 \sum_{|n_2| \les N^{1-\eps}}
\jb{n_2}^{-4\al}\\
& \too 0, 
\end{split}
\label{AB2}
\end{align}

\noi
as $N \to \infty$, where the last step follows from 
\eqref{CN}.
By Cauchy-Schwarz's inequality with~\eqref{CN},  the contribution to \eqref{AB1}
from  $ |n_1| \les |n_2|^{1-\eps}$
is bounded by 
\begin{align}
\begin{split}
& \le C_{\al, N}^4 \sum_{\substack{n= n_1+n_2\\|n_1| \les |n_2|^{1-\eps}
\le N^{1-\eps}
}} 
\frac{1} {\jb{n_1}^{2\al} \jb{n_2}^{2\al}}\\
& \les 
C_{\al, N}^4 
\bigg(
\sum_{|n_1| \les N^{1-\eps}}
\jb{n_1}^{-4\al}\bigg)^\frac 12
\bigg(
\sum_{|n_2| \les N}
\jb{n_2}^{-4\al}\bigg)^\frac 12\\
& \too 0, 
\end{split}
\label{AB3}
\end{align}

\noi
as $N \to \infty$.
It remains to consider the case
$|n_1| \gg |n_2|^{1-\eps}$
and 
$|n_2| \gg N^{1-\eps}$, 
which in particular implies
$|n_2| \ge |n_1| \gg N^{(1-\eps)^2}$.
Recall that $|\vp(n)|$ is decreasing for $n \in\N$.
Then, using~\eqref{phi2} and \eqref{CN}, 
we can bound 
the contribution to \eqref{AB1}
in this case by 
\begin{align}
\begin{split}
& \le C_{\al, N}^4 
N^{-(1-\eps)^2}
\sum_{\substack{n= n_1+n_2\\|n_1| \le  |n_2|\les N}} 
\frac{1} {\jb{n_1}^{2\al} \jb{n_2}^{2\al}}
\les 
N^{-(1-\eps)^2}\\
& \too 0, 
\end{split}
\label{AB4}
\end{align}

\noi
as $N \to \infty$. 

Hence, from \eqref{con7}, 
\eqref{AB2}, \eqref{AB3}, and \eqref{AB4}, 
we obtain
\begin{align*}
 C_{\al, N}^4   \sum_{\substack{n= n_1+n_2\\|n_1|, |n_2|\leq N}} 
\frac{e^{(t_1- t_2)(\vp(n_1)+ \vp(n_2))} }{\jb{n_1}^{2\al} \jb{n_2}^{2\al}}
 =  C_{\al, N}^4 \sum_{\substack{n= n_1+n_2\\|n_1|, |n_2|\leq N}} 
\frac{1 }{\jb{n_1}^{2\al} \jb{n_2}^{2\al}}
+ o(1), 
\end{align*}

\noi
as $N \to \infty$.
As a consequence, \eqref{con10} holds with $c_\al$ as in \eqref{con11}.

We note that \eqref{AA5} still holds in this case.
Indeed, from \eqref{AB2}
and \eqref{AB3}, the contribution to \eqref{con11}
vanishes unless
$|n_1| \gg |n_2|^{1-\eps}$
and 
$|n_2| \gg N^{1-\eps}$, 
which in particular implies
$|n_1| \gg N^{(1-\eps)^2}$.
Then, we can proceed as in 
Remark \ref{REM:CA1} with \eqref{AA2}
to deduce \eqref{AA5}.
Hence, by comparing 
\eqref{con10} and \eqref{con12} with \eqref{AA5}, 
we  obtain
  \eqref{con2}
  in this case.

This concludes the proof of Lemma \ref{LEM:COV}.
\end{proof}

We now turn to a proof of Proposition \ref{PROP:uniq}.
In view of Lemma \ref{LEM:BDW}, 
Proposition \ref{PROP:uniq}
follows once we  prove the following lemma.

\begin{lemma}\label{LEM:uniq2}
Let $\al\le \frac 14$.
Let $Z_N$ and $Z$ be as in \eqref{I3} and \eqref{lin2}, respectively.
Given $m \in \N$, let 
 $0\le t_1\le  \cdots \le t_m$.
Then, given any test functions $\{\psi_j\}_{j = 1}^m \subset \D(\T)$, 
$\big\{\jb{Z_N(t_j), \psi_j}\big\}_{j = 1}^m$
converges in law
to  the Gaussian vector 
$\big\{\jb{Z(t_j), \psi_j}\big\}_{j = 1}^m$
as $N \to \infty$.

\end{lemma}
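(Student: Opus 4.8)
The plan is to prove convergence in law of the finite-dimensional marginals via the fourth moment theorem (Lemma~\ref{LEM:FMT}), taking all chaos orders equal to $2$. For the setup: discarding any index $j$ with $t_j = 0$ (for which $Z_N(0) = Z(0) = 0$, so the marginal is a trivial constant), one writes, for each remaining $j$, $\jb{Z_N(t_j), \psi_j} = I_2(f_N^j)$, where $f_N^j \in L^2_{\textup{sym}}(\T^2)$ is the symmetric kernel whose Fourier coefficient $\ft{f_N^j}(n_1, n_2)$ equals the coefficient of $g_{n_1} g_{n_2}$ in the expansion~\eqref{con3a} (nonzero only when $|n_1|, |n_2| \le N$ and $n_1 + n_2 \ne 0$). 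Since $\ft{Z_N}(t, 0) = 0$ there is no diagonal Wick correction, so $\jb{Z_N(t_j), \psi_j}$ genuinely belongs to the second homogeneous chaos~$\H_2$, and from~\eqref{con3a}, using that $\vp$ is purely imaginary (so the time integral is bounded by~$t_j$) and $|\vp(n)| \le \tfrac12$, one reads off the pointwise bound
\[
|\ft{f_N^j}(n_1, n_2)| \les_{t_j} C_{\al, N}^2\, |\ft\psi_j(n_1 + n_2)|\, \jb{n_1}^{-\al}\jb{n_2}^{-\al}\, \ind_{\{|n_1|, |n_2| \le N\}},
\]
which is all that will be used. On the limit side, $Z$ in~\eqref{lin2} is linear in the white noise~$\ze$, so $\big\{\jb{Z(t_j), \psi_j}\big\}_{j=1}^m$ is a centered Gaussian vector; write $\Cs = (\Cs_{ij})$ for its covariance matrix, which is positive semi-definite by construction. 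Then the covariance hypothesis~\eqref{I5} is furnished directly by Lemma~\ref{LEM:COV}: \eqref{con1} gives vanishing means and \eqref{con2} gives $\lim_{N\to\infty}\E\big[\jb{Z_N(t_i), \psi_i}\jb{Z_N(t_j), \psi_j}\big] = \Cs_{ij}$.

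It then remains to verify condition~(ii) of Lemma~\ref{LEM:FMT}. As $k_i = 2$, the only relevant contraction index is $r = 1$, and one needs $\|f_N^i \otimes_1 f_N^i\|_{L^2(\T^2)} \too 0$ for each~$i$. I would stress that using the \emph{vectorial} form of the fourth moment theorem, rather than reducing to the scalar case by the Cram\'er--Wold device, is what keeps this step manageable: only the diagonal contractions $f_N^i \otimes_1 f_N^i$ enter, never the cross contractions $f_N^i \otimes_1 f_N^j$. To estimate the diagonal contraction, pass to Fourier space, where~\eqref{N3a} becomes $\ft{f_N^i \otimes_1 f_N^i}(m_1, m_2) = \sum_{k \in \Z} \ft{f_N^i}(m_1, k)\, \ft{f_N^i}(m_2, -k)$. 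Inserting the pointwise bound and using that $\ft\psi_i$ decays faster than any polynomial — so that $|\ft\psi_i(m_1 + k)|$ pins $k = -m_1 + O(1)$, hence $\jb{k} \sim \jb{m_1}$, and then $|\ft\psi_i(m_2 - k)|$ pins $m_2 = -m_1 + O(1)$ — one obtains, after the $k$-sum and then the $(m_1, m_2)$-sum, an estimate of the form
\[
\|f_N^i \otimes_1 f_N^i\|_{L^2(\T^2)}^2 \les_{t_i, \psi_i} C_{\al, N}^8 \sum_{|m| \le N} \jb{m}^{-8\al}.
\]
By~\eqref{CN}, this tends to~$0$: for $\al < \tfrac14$ one has $C_{\al, N}^8 \sim \jb{N}^{8\al - 2}$, and $\sum_{|m| \le N}\jb{m}^{-8\al}$ is $\sim \jb{N}^{1 - 8\al}$ when $\al < \tfrac18$ (the case $\al \le 0$ included), $\sim \log\jb{N}$ when $\al = \tfrac18$, and $O(1)$ when $\tfrac18 < \al < \tfrac14$, so the product $\to 0$ in every case; for $\al = \tfrac14$ one has $C_{\al, N}^8 \sim (\log\jb{N})^{-2}$ and $\sum_{|m| \le N}\jb{m}^{-2} = O(1)$, so again the product $\to 0$.

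With condition~(ii) in hand, Lemma~\ref{LEM:FMT} (the implication (ii)\,$\LRA$\,(iii)) gives that $\big(\jb{Z_N(t_1), \psi_1}, \dots, \jb{Z_N(t_m), \psi_m}\big)$ converges in law to $\NN(0, \Cs)$, which by the first paragraph is exactly the law of $\big(\jb{Z(t_1), \psi_1}, \dots, \jb{Z(t_m), \psi_m}\big)$. This is the assertion of Lemma~\ref{LEM:uniq2}, and Proposition~\ref{PROP:uniq} then follows from it via Lemma~\ref{LEM:BDW}.

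The main obstacle is the contraction estimate in the second paragraph: one must carry enough decay from $\ft\psi_i$ through the localization step to collapse the double sum to the single sum $C_{\al, N}^8 \sum_{|m| \le N}\jb{m}^{-8\al}$, and then confirm the power counting in every regime of $\al \le \tfrac14$, including the logarithmic endpoint $\al = \tfrac14$ and strongly negative~$\al$. Heuristically the estimate works because the contraction eliminates one free resonant frequency, producing a vanishing gain (roughly $\jb{N}^{-1}$ in the roughest regime) relative to the $O(1)$ quantity $\|f_N^i\|_{L^2(\T^2)}^2$; everything else is routine Fourier bookkeeping or already supplied by Lemmas~\ref{LEM:COV}, \ref{LEM:FMT}, and~\ref{LEM:BDW}.
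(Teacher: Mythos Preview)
Your proof is correct and follows the same overall strategy as the paper: represent each $\jb{Z_N(t_j),\psi_j}$ as $I_2(f_N^j)$, import the covariance convergence from Lemma~\ref{LEM:COV}, verify condition~(ii) of the fourth moment theorem (Lemma~\ref{LEM:FMT}) by showing $\|f_N^j\otimes_1 f_N^j\|_{L^2(\T^2)}\to 0$, and conclude via (ii)\,$\Rightarrow$\,(iii).

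The only substantive difference lies in how the contraction bound is executed. The paper, after arriving at the analogue of your pointwise bound, organizes the estimate~\eqref{B4} via a three-case split according to whether $\max(|n_j+m|)\gtrsim N^\theta$ and whether $\min(|n_1|,|n_2|,|m|)\lesssim N^\theta$ (Cases~1--3 in the proof, equations~\eqref{B6}--\eqref{B8}), handling $0<\al<\tfrac14$, $\al=\tfrac14$, and $\al\le 0$ with separate remarks. Your route---using the super-polynomial decay of $\ft\psi_i$ to localize $k\approx -m_1$ and $m_2\approx -m_1$ in one step (rigorously via Peetre's inequality $\jb{k}^{-2\al}\lesssim\jb{m_1}^{-2\al}\jb{m_1+k}^{2|\al|}$), then collapsing directly to $C_{\al,N}^8\sum_{|m|\le N}\jb{m}^{-8\al}$---is more streamlined: it treats all $\al\le\tfrac14$ uniformly and replaces the case analysis by a single power count. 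The paper's splitting is more hands-on but yields the same conclusion; your version is a genuine simplification of that step.
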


\begin{proof}

%

Fix  $t \ge 0$
and a non-zero test function $\psi \in \D(\T)$.
It follows from 
\eqref{con3a} with $\vp(n)|_{n = 0} = 0$
that $\jb{Z_N(t), \psi}$ belongs to 
the second homogeneous Wiener chaos $\H_2$. 
Then, from \eqref{con3a} and
 \eqref{WW2} with~\eqref{WW1}, 
we write 
 $\jb{Z_N(t), \psi}$ as
\begin{align}
\jb{Z_N(t), \psi}
= I_2(\fn^t),
\label{B1}
\end{align}

\noi
where $\fn^t = \fn^t(x_1, x_2) \in L^2(\T^2)$ is a symmetric function
whose Fourier transform is given by 
\begin{align}
\ft {\fn^t}(n_1, n_2)
&
 = 
 \ind_{ |n_1|, |n_2|\leq N}
 \cdot 
 C_{\al, N}^2
\frac{ e^{t \vp(n_1 + n_2)} \vp(n_1+n_2)\cj{\ft \psi(n_1+n_2)}}{\jb{n_1}^\al \jb{n_2}^{\al}}
J_{n_1, n_2}(t)
\label{B2}
\end{align}

\noi
for $n_1, n_2\in \Z$.
Here,  $J_{n_1, n_2}(t)$ is defined by 
\begin{align*}
J_{n_1, n_2}(t)
=  \int^t_0
 e^{- t' (\vp(n_1+n_2) -\vp(n_1) -\vp(n_2))}  dt'.
\end{align*}

Our first goal is to prove the following limit:
\begin{align}
\lim_{N\to\infty} \|  \fn^t \otimes_1 \fn^t \|_{L^2(\T^2) } =0.
\label{B2b}
\end{align}

\noi
By letting
\begin{align}
Y_{n_j, m} (t)
= \frac{e^{t \vp(n_j +m)} \vp(n_j+m)\cj{\ft \psi(n_j+m)}  }{\jb{n_j}^{\al} \jb{m}^{\al} }
J_{n_j, m}(t) , 
\label{B2z}
\end{align}

\noi
it follows from 
\eqref{N3a} and \eqref{B2} that 
\begin{align*}
& \fn^t \otimes_1 \fn^t (x_1, x_2) 
= \int_{\T} \fn^t (x_1, y) \fn^t (x_2, y) dy\\
& \quad = \sum_{m \in \Z}\F_2(\fn^t) (x_1, m)
 \cj{\F_2 (\fn^t) (x_2, m)}\\
& \quad =  C_{\al, N}^4
\sum_{|n_1|, |n_2|, |m|\le N}
e^{i(n_1 x_1 +n_2x_2) } 
Y_{n_1, m}(t)\cj{Y_{n_2, m}(t)}, 
\end{align*}

\noi
where $\F_2(f)$ denotes the Fourier transform 
of $f$ on $\T^2$ only in the second variable.
Then, by applying Plancherel's identity
with \eqref{B2z}, \eqref{phi2} (in particular, $\vp(n)$ is purely imaginary and $|\vp(n)|\le \frac 12$),  and  $|J_{n, n_1}(t)|\le t$, 
 we have
\begin{align}
\begin{split}
& \| \fn^t \otimes_1 \fn^t \|_{L^2(\T^2) }^2\\
& \quad \le  t^4 C_{\al, N}^8
\sum_{|n_1|, |n_2|\le N}
\bigg|\sum_{|m|\le N}
\frac{|\ft \psi(n_1+m)||\ft \psi(n_2+m)|}{\jb{n_1}^{\al}
\jb{n_2}^{\al}
\jb{m}^{2\al}}\bigg|^2.
\end{split}
\label{B4}
\end{align}

\noi
In the following, we bound the right-hand side
by dividing the argument into three cases.
As in Remark \ref{REM:CA1}, we first discuss  the case $0 < \al < \frac 14$
in detail.

\medskip

\noi
{\bf $\bullet$ Case 1:} $\max(|n_1+ m|, |n_2+ m|) \ges N^\ta$
for some small $\ta = \ta(\al)>0$ (to be chosen later).
\\ \indent
Let $A_1$ denote the contribution to the right-hand side of \eqref{B4} from this case.
Without loss of generality, assume
$|n_1+ m| \ges N^\ta$.
Then, by the fast decay of $\ft \psi$, we have
\begin{align}
|\ft \psi(n_1 + m)| \les \jb{n_1 + m}^{-100\ta^{-1}} \les N^{-100}.
\label{B5}
\end{align}

\noi
By applying \eqref{B5} and Cauchy-Schwarz's inequality (in $m$)
 with \eqref{CN}, 
we have 
\begin{align}
\begin{split}
\lim_{N\to \infty}A_1 
& \les 
t^4 
\|\ft \psi\|_{\l^\infty_n} 
\lim_{N\to \infty} N^{-100} C_{\al, N}^8
\bigg( \sum_{|n|,  |m|\le N}
\frac{|\ft \psi(n+m)|}{\jb{n}^{2\al}
\jb{m}^{2\al}}\bigg)^2\\
& \le 
t^4 
\|\ft \psi\|_{\l^\infty_n} 
\|\ft \psi\|_{\l^1_n}^2 
\lim_{N\to \infty}
N^{-100} C_{\al, N}^8 
\bigg( \sum_{|n|\le N}
\frac{1}{\jb{n}^{4\al}}\bigg)^2\\
& = 0, 
\end{split}
\label{B6}
\end{align}

\noi
where the second step follows from 
 Cauchy-Schwarz's inequality (in $n$)
and Young's inequality.

\medskip

\noi
{\bf $\bullet$ Case 2:} $\max(|n_1+ m|, |n_2+ m|) \ll N^\ta$
and $\min(|n_1|, |n_2|, |m|) \les N^\ta$.
\\ \indent
Let $A_2$ denote the contribution to the right-hand side of \eqref{B4} from this case.
In this case, we must have 
$\max(|n_1|, |n_2|, |m|) \les N^\ta$.
Thus, from \eqref{B4} with \eqref{CN}, we have 
\begin{align}
\begin{split}
\lim_{N\to \infty}A_2
& \les 
t^4 \|\ft \psi\|_{\l^\infty_n}^4 
\lim_{N\to \infty}  C_{\al, N}^8
\sum_{|n_1|, |n_2|\les N^\ta}
\bigg|\sum_{|m|\les N^\ta} 1\bigg|^2\\
& \les 
t^4 \|\ft \psi\|_{\l^\infty_n}^4 
\lim_{N\to \infty}  N^{4\ta}C_{\al, N}^8\\
& = 0, 
\end{split}
\label{B7}
\end{align}

\noi
provided that $\ta = \ta(\al) > 0$ is sufficiently small such that
$\ta < \frac 12 - 2\al$.

\medskip

\noi
{\bf $\bullet$ Case 3:} $\max(|n_1+ m|, |n_2+ m|) \ll N^\ta$
and $\min(|n_1|, |n_2|, |m|) \gg N^\ta$.
\\ \indent
Let $A_3$ denote the contribution to the right-hand side of \eqref{B4} from this case.
In this case, we must have $|n_1|\sim|n_2|\sim |m|$.
Furthermore, for fixed $n_1 \in \Z$ with $N^\ta \ll |n_1|\le N$, 
there are $O(N^\ta)$-many choices for each of $n_2$ and $m$, 
satisfying the constraints.
Then, by summing over $n_2$ and $m$ for fixed $n_1$
and applying \eqref{CN}, we have
\begin{align}
\begin{split}
\lim_{N \to \infty}  A_3
 \les  t^4 
 \|\ft \psi\|_{\l^\infty_n}^4 
 \lim_{N \to \infty} 
 N^{3\ta} C_{\al, N}^8
\sum_{|n_1|\le N}\frac 1{\jb{n_1}^{8\al}}
= 0, 
\end{split}
\label{B8}
\end{align}

\noi
provided that 
$0 < \ta <  \min \big(\frac {2}{3} - \frac {8}{3} \al, \frac {1}{3}\big)$.

\medskip
Therefore, we obtain \eqref{B2b} from \eqref{B4}, 
\eqref{B6}, \eqref{B7}, and \eqref{B8}, 
when $0 < \al < \frac 14$.
The required modification for the $\al = \frac 14$ case is straightforward
(as in Remark \ref{REM:CA1})
and thus we omit details.
When $\al \le 0$, 
Cases 1 and 3 hold as they are written.
As for Case 2, we replace~\eqref{B7}
by 
\begin{align*}
\lim_{N\to \infty}A_2
& \les 
t^4 \|\ft \psi\|_{\l^\infty_n}^4 
\lim_{N\to \infty} N^{-8 \al\ta}  C_{\al, N}^8
\sum_{|n_1|, |n_2|\les N^\ta}
\bigg|\sum_{|m|\les N^\ta} 1\bigg|^2\\
& \les 
t^4 \|\ft \psi\|_{\l^\infty_n}^4 
\lim_{N\to \infty}  N^{(4 - 8\al) \ta}C_{\al, N}^8\\
& = 0, 
\end{align*}

\noi
provided that $\ta = \ta(\al) > 0$ is sufficiently small such that
$\ta < \frac {1- 4 \al}{2 - 4\al}$.
Therefore, \eqref{B2b} holds
for any $\al \le \frac 14$.

\medskip

We now prove the main claim.
Without loss of generality, assume $\psi_j\not\equiv0$ for any $j = 1, \dots, m$.
In view of \eqref{B1}, 
we will apply the fourth moment theorem (Lemma \ref{LEM:FMT})
to the sequence $\{F_N\}_{N \in \N}$ given by 
\begin{align*}
 F_N & = \big(\jb{Z_N(t_1), \psi_1}, \dots, \jb{Z_N(t_m), \psi_m}\big)\\
 & = \big(I_2(f^{t_1}_{N, \psi_1}),\dots I_2(f^{t_m}_{N, \psi_m})\big).
\end{align*}

\noi
It follows from Lemma \ref{LEM:COV}
that 
an $m\times m$  matrix  $\Cs=(\Cs_{ij})_{1\le i,j\le m}$ 
defined by 
\begin{align*}
\Cs_{ij} = \lim_{N\to \infty} 
\E\Big[\jb{Z_N (t_i), \psi_i} \jb{ Z_N (t_j), \psi_j}\Big]
= 
\E\Big[\jb{ Z (t_i), \psi_i}\jb{ Z (t_j), \psi_j}\Big]
\end{align*}

\noi
is positive semi-definite.\footnote{Recall that every covariance matrix is positive semi-definite;
see \cite[pp.86-87]{Kal}.}
From this observation
 and~\eqref{B2b}, 
we then invoke
the fourth moment theorem (Lemma~\ref{LEM:FMT})
with~\eqref{B1}
and 
 conclude that
$\big\{\jb{ Z_N(t_j), \psi_j}\big\}_{j = 1}^m$
converges in law
to 
$\big\{\jb{ Z(t_j), \psi_j}\big\}_{j = 1}^m$
as $N \to \infty$.
This proves Lemma \ref{LEM:uniq2}, 
and hence, Proposition \ref{PROP:uniq} in view of Lemma \ref{LEM:BDW}.
\end{proof}

\subsection{Tightness}
\label{SUBSEC:Z2}

Next, we show  tightness for 
$\{Z_N\}_{N \in \N}$.

\begin{proposition}\label{PROP:tight}
Let $\al\le \frac 14$
and $Z_N$ be as in \eqref{I3}.
Then, given any  $s < \frac 12$, 
the sequence $\{Z_N\}_{N \in \N}$ is tight in 
 $C(\R_+;W^{s,\infty}(\T))$.

\end{proposition}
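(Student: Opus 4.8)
The plan is to deduce tightness from uniform-in-$N$ quantitative regularity bounds on $Z_N$ via Lemma~\ref{LEM:OOT}, and then to construct the required compact sets by hand. Fix $s < s_1 < \frac12$. First I would observe that $Z_N = \I\big(\vp(D)(z_N^2)\big)$ is a translation-covariant measurable functional of the spatially stationary process $z_N$ in~\eqref{lin1}, hence $Z_N \colon \R_+ \to \D'(\T)$ is itself spatially stationary; moreover, by~\eqref{con3} (and~\eqref{B1}), $Z_N(t)$ and the increment $\dl_h Z_N(t)$ lie in the inhomogeneous Wiener chaos $\H_{\le 2}$ for every $t \ge 0$ and $h$. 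So Lemma~\ref{LEM:OOT} applies with $k = 2$ and $d = 1$, and the task reduces to establishing, for every $T > 0$, the bounds
\begin{align}
\E\big[|\ft Z_N(t,n)|^2\big] \les_T \jb{n}^{-2}
\quad\text{and}\quad
\E\big[|\dl_h \ft Z_N(t,n)|^2\big] \les_T \jb{n}^{-2}\,|h|^{2\ta}
\label{PLAN1}
\end{align}

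\noi
for all $n \in \Z$, $t \in [0,T]$, $h \in [-1,1]$, and some fixed $\ta \in (0,1)$, with implicit constants independent of $N$; these are precisely the hypotheses of Lemma~\ref{LEM:OOT}\,(ii)--(iii) with $s_0 = \frac12$ (the hypothesis at $t = 0$ being vacuous since $Z_N(0) = 0$).

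To prove~\eqref{PLAN1}, I would start from~\eqref{con3}: orthogonality of the Gaussians together with~\eqref{mom2} gives
\begin{align*}
\E\big[|\ft Z_N(t,n)|^2\big]
\les C_{\al,N}^4\,|\vp(n)|^2
\sum_{\substack{n = n_1 + n_2\\|n_1|,|n_2|\le N}} \frac{|K_{n_1,n_2}(t)|^2}{\jb{n_1}^{2\al}\jb{n_2}^{2\al}},
\end{align*}

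\noi
and the analogous identity with $\dl_h K_{n_1,n_2}(t)$ in place of $K_{n_1,n_2}(t)$, where $K_{n_1,n_2}(t) = \int_0^t e^{(t - t')\vp(n_1+n_2)}\,e^{t'(\vp(n_1) + \vp(n_2))}\,dt'$. Since $\vp$ is purely imaginary with $|\vp(n)| \le \frac12$, I have $|K_{n_1,n_2}(t)| \le t$ and, using $|e^{h\vp(n)} - 1| \le \min\big(2,|h||\vp(n)|\big) \les |h|^\ta$ for $|h| \le 1$, also $|\dl_h K_{n_1,n_2}(t)| \les_T |h|^\ta$; together with $|\vp(n)|^2 \sim \jb{n}^{-2}$ this reduces~\eqref{PLAN1} to the key elementary estimate
\begin{align}
C_{\al,N}^4
\sum_{\substack{n = n_1 + n_2\\ |n_1|,|n_2|\le N}} \frac{1}{\jb{n_1}^{2\al}\jb{n_2}^{2\al}} \les 1
\qquad\text{for all } \al \le \tfrac14,\ N \ge 1,\ n \in \Z.
\label{PLAN2}
\end{align}

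\noi
I would obtain~\eqref{PLAN2} from $\jb{n_1}^{-2\al}\jb{n_2}^{-2\al} \le \tfrac12\big(\jb{n_1}^{-4\al} + \jb{n_2}^{-4\al}\big)$, which bounds the sum by $\les \sum_{|k| \les N}\jb{k}^{-4\al}$ (using $|n_1 + n_2| = |n| \le 2N$ on the support), and this is $\les C_{\al,N}^{-4}$ by~\eqref{CN}. This is the same mechanism that produces $c_\al = 1$ in Remark~\ref{REM:CA1}, except that I now need the \emph{uniform in $n$} upper bound, valid over the whole range $\al \le \frac14$ (where Lemma~\ref{LEM:SUM} does not apply). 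I expect this uniformity in $N$ to be the only genuine obstacle; it is exactly what the renormalization constant~\eqref{CN} is designed to deliver, and the rest is bookkeeping.

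With~\eqref{PLAN1} at hand, I would apply Lemma~\ref{LEM:OOT}\,(iii) with $s_1$ in place of $s$ (to each $Z_N$ individually) to get, for suitable $p > \ta^{-1}$ and $0 < \be < \ta - \frac1p$, a constant $C_T = C(T,\be,p,s_1)$, independent of $N$, with
\begin{align*}
\sup_{N\in\N}\PP\Big(\|Z_N\|_{\C^\be([0,T];W^{s_1,\infty}(\T))} > \ld\Big) \le C_T\,\ld^{-p}
\qquad\text{for all }\ld > 0.
\end{align*}

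\noi
Then, given $\dl > 0$, I would choose $\ld_j > 0$ with $C_j\,\ld_j^{-p} < \dl\,2^{-j}$ and set
\[
K_\dl = \bigcap_{j\in\N}\Big\{f \in C(\R_+;W^{s,\infty}(\T)) : \| f|_{[0,j]} \|_{\C^\be([0,j];W^{s_1,\infty}(\T))} \le \ld_j\Big\}.
\]
Since a $\C^\be$-in-time ball with values in $W^{s_1,\infty}(\T)$ is equicontinuous in time and, at each fixed $t$, precompact in $W^{s,\infty}(\T)$ by the compact embedding $W^{s_1,\infty}(\T) \hookrightarrow W^{s,\infty}(\T)$ (valid since $s_1 > s$ and $\T$ is compact), the Arzel\`a--Ascoli theorem applied on each $[0,j]$ shows that each set in the intersection — and hence $K_\dl$ — is compact in $C(\R_+;W^{s,\infty}(\T))$ endowed with the compact-open topology, while $\sup_{N\in\N}\PP(Z_N \notin K_\dl) \le \sum_{j\in\N}\dl\,2^{-j} = \dl$. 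This would verify the tightness criterion of Definition~\ref{DEF:tight}; alternatively, one may phrase the last step through the Prokhorov theorem (Lemma~\ref{LEM:Pro}).
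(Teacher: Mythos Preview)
Your proposal is correct and follows essentially the same strategy as the paper's proof: uniform-in-$N$ second-moment bounds on $\ft Z_N(t,n)$ and $\dl_h\ft Z_N(t,n)$ via the key estimate $C_{\al,N}^4 \sum_{n=n_1+n_2,\,|n_j|\le N}\jb{n_1}^{-2\al}\jb{n_2}^{-2\al} \les 1$ (the paper uses Cauchy--Schwarz where you use AM--GM, which are equivalent here), then Lemma~\ref{LEM:OOT}\,(iii) and an Arzel\`a--Ascoli/Rellich compactness argument. The remaining differences are cosmetic (the paper takes $T_j = 2^j$ and writes out an explicit diagonal extraction in place of your intersection).
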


We first present a proof of Theorem \ref{THM:1}, 
assuming 
Proposition \ref{PROP:tight}.

\begin{proof}[Proof of Theorem \ref{THM:1}]
Let $s < \frac 12$. Given a subsequence $\{Z_{N_j}\}_{j \in \N}$
of $\{Z_N\}_{N \in \N}$, 
it follows from  the Prokhorov theorem (Lemma \ref{LEM:Pro})
that 
there exists a subsubsequence
$\{Z_{N_{j_k}}\}_{k \in \N}$,
converging in law to some limit $Z_\infty$ 
in $C(\R_+;W^{s,\infty}(\T))$
as $k \to \infty$.
On the other hand, 
from Proposition~\ref{PROP:uniq}, 
we see that such a limit is unique
as $\D'(\T)$-valued process in time, 
which in particular guarantees uniqueness in law of the limit 
in  $C(\R_+;W^{s,\infty}(\T))$.
Namely, $Z_\infty = Z$ in law, 
where $Z$ is as in \eqref{lin2}.
Therefore, we conclude that the entire sequence $\{Z_N\}_{N\in \N}$
converges in law to the unique limit $Z$
 in $C(\R_+;W^{s,\infty}(\T))$ as $N \to \infty$.
This proves Theorem~\ref{THM:1}.
\end{proof}

We conclude this section by presenting a proof of Proposition \ref{PROP:tight}.
While it is elementary and well known,\footnote{For example, 
the second half of the proof of Proposition \ref{PROP:tight}
immediately follows from \eqref{dl2}, \eqref{dl5}, and 
 \cite[Theorem 23.7]{Kal}.} 
we present some details for readers' convenience.

\begin{proof}[Proof of Proposition \ref{PROP:tight}]

Let $t \in \R_+$.
From \eqref{con6} with $\psi = e_n$
and Cauchy-Schwarz's inequality with \eqref{CN}
and \eqref{phi2}, we have 
\begin{align}
\begin{aligned}
\E \Big[|\ft Z_N (t, n)|^2\Big]
&  =  2C_{\al, N}^4
|\vp(n)|^2
\int^t_0 e^{-t_1 \vp(n)} \int^t_0 e^{t_2\vp(n)}\\
& \quad \times 
\sum_{\substack{n= n_1+n_2\\|n_1|, |n_2|\leq N}} 
\frac{e^{(t_1- t_2)(\vp(n_1)+ \vp(n_2))} }{\jb{n_1}^{2\al} \jb{n_2}^{2\al}}
 dt_1 dt_2\\
 & \les t^2 
 C_{\al, N}^4
 |\vp(n)|^2
\sum_{|n_1|\le N}\frac{1}{\jb{n_1}^{4\al}}\\
&  \les t^2  \jb{n}^{-2}.
\end{aligned}
\label{TT1}
\end{align}

\noi
Hence, from  Lemma \ref{LEM:OOT}\,(i), 
we have $Z_N(t)\in W^{s,\infty}(\T)$ for any $s<\frac 12$,
almost surely.
Moreover, from the mean value theorem with \eqref{phi2}, we have
\begin{align}
|e^{(t+h)\vp(n)} - e^{t\vp(n)}| \le |h|
\label{TT2}
\end{align}

\noi
for each $t \in \R_+$ and $h \in \R$.
A slight modification of \eqref{TT1} with \eqref{TT2}
yields
\begin{align*}
\E \Big[|\dl_h \ft Z_N (t, n)|^2\Big]
\les_t  \jb{n}^{-2}|h|^2
\end{align*}

\noi
for any $h \in \R$, 
where $\dl_h$ is the difference operator defined in \eqref{dl1}.
Since the argument is standard, we omit details;
see, for example, the proof of \cite[Lemma 3.1\,(i)]{GKO2}.
Hence, from 
 Lemma \ref{LEM:OOT}\,(ii), 
 we conclude that 
 $Z_N\in C(\R_+; W^{s,\infty}(\T))$ for any $s<\frac 12$,
almost surely.

Let $s < s_1 < \frac 12$ and $0 < \be < \frac 12$.
Given $\dl > 0$, define $K_\dl$ by 
\begin{align}
 K_\dl = \big\{ u \in C(\R_+; W^{s, \infty}(\T)):\, \| u \|_{\C^\be_{T_j} W_x^{s_1, \infty}} \leq  c_0 \dl^{-\frac 12} T_j 
\text{ for all }
j \in \N \big\}, 
\label{TT4}
\end{align}

\noi
for some $c_0 \gg1 $, 
where  $T_j = 2^j$.
Then, by applying 
Lemma \ref{LEM:OOT}\,(iii) (with $p = 2$)
and choosing $c_0 \gg 1$ sufficiently large, we have 
\[ \PP(K_\dl^c) 
\leq C_{\be, 2} c^{-2}_0  \dl \sum_{j = 1}^\infty T_j^{-1}
=  C_{\be, 2}c^{-2}_0 \dl < \dl.\]

Hence, it remains to prove that $K_\dl$ is compact in $C(\R_+; W^{s, \infty}(\T))$
endowed with the compact-open topology in time.
Recall from Rellich's lemma and the Arzel\`a-Ascoli theorem
\cite[Theorem~40 in Section~7.10]{Royden}
that  the embedding 
\[\C^\be([0, T];  W^{s_1, \infty}(\T)) \subset 
C([0, T]; W^{s, \infty}(\T))\]

\noi
 is compact
for each $T>0$.
Let $\{u_n \}_{n \in \N} \subset K_\dl$.
By the definition \eqref{TT4} of $K_\dl$, 
$\{u_n \}_{n \in \N}$ is bounded in 
$\C^\be([0, T_j]; W^{s_1, \infty}(\T))$ for each $j \in \N$.
Then, by a diagonal argument, 
we can extract a subsequence $\{u_{n_k} \}_{k \in \N}$
convergent in 
$C([0, T_j]; W^{s, \infty}(\T))$ for each $j \in \N$.
In particular,  $\{u_{n_\l} \}_{\l \in \N}$ converges uniformly in $W^{s, \infty}(\T)$
on any compact time interval.
Hence,  $\{u_{n_k} \}_{k \in \N}$ converges in $C(\R_+; W^{s, \infty}(\T))$
endowed with the compact-open topology.
This proves that $K_\dl$ is compact in $C(\R_+; W^{s, \infty}(\T))$, 
thus concluding the proof of Proposition~\ref{PROP:tight}.
\end{proof}

\begin{remark}\label{REM:indep}\rm

Let $\al \le \frac 14$. 
Given $N \in \N$, 
let  $u_0$, $Z_N$,  and $Z$ be  as in \eqref{data1}, \eqref{I3} and~\eqref{lin2}, respectively.
Then, from Lemma \ref{LEM:COV}, 
we have 
\begin{align*}
& \lim_{N\to \infty} 
\E\Big[\jb{(\P_N u_0, Z_N (r)), \psi_1}_2 \jb{ (\P_N u_0, Z_N (t)), \psi_2}_2\Big]\\
& \quad = 
\E\Big[\jb{(u_0,  Z (r)), \psi_1}_2\jb{ (u_0, Z (t)), \psi_2}_2\Big]
\end{align*}

\noi
\noi
for any test functions $\psi_1, \psi_2 \in (\D(\T))^{\otimes 2}$ and $t \ge r \ge 0$, 
where 
$\jb{f, g}_2$ is as in \eqref{dual1}
with $m = 2$.
Moreover, 
a slight modification of the proof of Lemma~\ref{LEM:uniq2},
using the fourth moment theorem (Lemma~\ref{LEM:FMT}),  shows that,
given any test functions $\{\psi_j\}_{j = 0}^m \subset \D(\T)$,  
the vector 
\[ F_N = \big( \jb{\P_N u_0, \psi_0},  \jb{Z_N(t_1), \psi_1}, \dots, 
\jb{Z_N(t_m), \psi_m}\big)\]

\noi
converges in law
to the jointly Gaussian vector
\[ F = \big( \jb{u_0, \psi_0}, 
\jb{Z(t_1), \psi_1}, \dots, 
\jb{Z(t_m), \psi_m}\big),\]

\noi
as $N \to \infty$.
Noting from 
Lemma \ref{LEM:OOT}
and 
Proposition \ref{PROP:tight}
that 
the sequence  $\big\{(\P_N u_0, Z_N)\big\}_{N \in \N}$
is tight in 
 $W^{\al - \frac 12 - \eps, \infty}(\T)
 \times C(\R_+; W^{\frac 12 - \eps, \infty}(\T))$, 
we conclude that  
 $(\P_N u_0, Z_N)$
 converges in law
 to the jointly Gaussian process
 $(u_0, Z)$
 in
 $W^{\al - \frac 12 - \eps, \infty}(\T)
 \times C(\R_+; W^{\frac 12 - \eps, \infty}(\T))$
 as $N \to \infty$.

Given  any test functions $\psi_1, \psi_2 \in \D(\T)$, $t \in \R_+$, 
and $N \in \N$, 
it follows from 
\eqref{data1}, \eqref{con3a}
and 
Wick's theorem \cite[Proposition I.2]{Simon}
that\footnote{Alternatively, 
by noting that 
$\jb{u_0, \psi_1}  \in \H_1$ and $\jb{ Z_N (t), \psi_2} \in \H_2$, 
\eqref{Zcon2} follows from the orthogonality 
of $\H_1$ and $\H_2$.}
\begin{align}
\E\Big[\jb{u_0, \psi_1} \jb{ Z_N (t), \psi_2}\Big] = 0.
\label{Zcon2}
\end{align}

\noi
Hence, by taking a limit as $N \to \infty$, 
it follows from \eqref{Zcon2}
and Theorem~\ref{THM:1} that 
\begin{align*}
\E\Big[\jb{u_0, \psi_1}\jb{ Z (t), \psi_2}\Big]
= 
\lim_{N\to \infty} 
\E\Big[\jb{u_0, \psi_1} \jb{ Z_N (t), \psi_2}\Big] = 0,
\end{align*}

\noi
which implies $u_0$ in \eqref{data1}
and the Gaussian process $Z$ 
constructed in Theorem~\ref{THM:1}
are independent;
see, for example, \cite[Theorem 2.4.3]{Stroock}.
Hence, recalling  that $Z(t) = - \I(\vp(D) \ze)$, 
we conclude that $u_0$ and $\ze$ are independent.

\end{remark}

\section{On the limiting equation}
\label{SEC:limit}

In this section, we 
prove almost sure global well-posedness
of the limiting equation \eqref{BBM4d}
(Proposition \ref{PROP:2}).
In view of the almost sure regularity property 
of $Z$ established in Theorem~\ref{THM:1}, 
we see that 
Proposition \ref{PROP:2}
follows once we prove the following
global well-posedness of the perturbed BBM \eqref{XM1}.

\begin{proposition}\label{PROP:WP1}
Let $f \in C(\R_+; W^{\frac 12 - \eps,\infty}(\T))$.
Then, there exists a global-in-time solution 
$v \in C(\R_+; H^1(\T))$
to the following perturbed BBM\textup{:}
\begin{align}
\begin{cases}
\dt   v=  \vp(D) v +  \vp(D) 
\big( (v +  f)^2 \big) \\
 v|_{t=0} =0.
 \end{cases}
\label{XM1}
\end{align}

\end{proposition}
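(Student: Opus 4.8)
The plan is to prove Proposition~\ref{PROP:WP1} by a standard fixed-point argument in $C([0,T]; H^1(\T))$ for short time, followed by an a~priori bound that is linear in the $H^1$-norm of the solution, allowing the local solution to be iterated to a global one on $\R_+$. The key structural fact to exploit is the smoothing built into $\vp(D)$: by Lemma~\ref{LEM:BT}, for $s \ge 0$ the bilinear map $(f,g) \mapsto \vp(D)(fg)$ is bounded from $H^s \times H^s$ into $H^s$, so on the scale $H^1(\T)$ the nonlinearity gains no derivative and behaves like a bounded bilinear form. The only subtlety is that the perturbation $f$ lies in $W^{\frac12-\eps,\infty}(\T) \hookrightarrow H^{\frac12-\eps}(\T)$, not in $H^1(\T)$; so the terms involving $f$ must be handled with the product estimate of Lemma~\ref{LEM:BOZ} (or directly, since $W^{\frac12-\eps,\infty}$ is an algebra-like space against $H^1$) rather than Lemma~\ref{LEM:BT}.

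\textbf{Local well-posedness.} First I would recast \eqref{XM1} in Duhamel form,
\begin{align*}
v(t) = \int_0^t S(t-t')\,\vp(D)\big( (v(t') + f(t'))^2 \big)\,dt' =: \Phi(v)(t),
\end{align*}
and show that $\Phi$ is a contraction on the ball $B_R = \{ v \in C([0,T]; H^1(\T)) : \|v\|_{C_T H^1_x} \le R \}$ for suitable $R, T$. Since $S(t) = e^{t\vp(D)}$ is a bounded group on $H^1(\T)$ (indeed $\vp(D)$ is skew-adjoint plus a smoothing correction; in any case $\|S(t)\|_{H^1 \to H^1} \lesssim 1$ uniformly on compact time intervals), and since
\begin{align*}
\|\vp(D)\big( (v+f)^2 \big)\|_{H^1}
\lesssim \|v\|_{H^1}^2 + \|v\|_{H^1}\,\|f\|_{W^{\frac12-\eps,\infty}} + \|f\|_{W^{\frac12-\eps,\infty}}^2
\end{align*}
by Lemma~\ref{LEM:BT} for the $v^2$ term and Lemma~\ref{LEM:BOZ} (with $s = \frac12-\eps$, suitable $p,q,r$) for the cross term $vf$ and the $f^2$ term, we obtain $\|\Phi(v)\|_{C_T H^1_x} \lesssim T(R^2 + R M + M^2)$ where $M = \|f\|_{C_T W^{\frac12-\eps,\infty}_x}$; a similar bound holds for the difference $\Phi(v_1) - \Phi(v_2)$. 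Choosing $R \sim 1 + M$ and $T \sim T(M) > 0$ small makes $\Phi$ a contraction, yielding a unique local solution in $C([0,T]; H^1(\T))$, and uniqueness in the full space follows in the usual way.

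\textbf{Globalization.} The local existence time $T(M)$ depends only on $M = \sup_{[0,T]}\|f\|_{W^{\frac12-\eps,\infty}}$ and on $\|v(0)\|_{H^1}$, so it suffices to obtain an a~priori bound on $\|v(t)\|_{H^1}$ that grows at most exponentially in $t$ (and locally in the $W^{\frac12-\eps,\infty}$-norm of $f$, which on any finite interval $[0,T_0]$ is finite since $f \in C(\R_+; W^{\frac12-\eps,\infty}(\T))$). From the Duhamel formula and the same bilinear estimates,
\begin{align*}
\|v(t)\|_{H^1} \lesssim \int_0^t \Big( \|v(t')\|_{H^1}^2 + \|v(t')\|_{H^1}\,M_{T_0} + M_{T_0}^2 \Big)\,dt',
\end{align*}
which, while quadratic, can be closed: either one runs the contraction on each step with a uniform time-step once an a~priori bound is in hand, or — more robustly — one derives the energy-type bound following \cite{BT09, Forlano}, exploiting that $\vp(D)$ maps $H^s$ to $H^{s+1}$ so that the BBM flow enjoys the same global well-posedness mechanism as in those works with the extra forcing-type term $2\vp(D)(vf) + \vp(D)(f^2)$ contributing only a locally bounded inhomogeneity. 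This gives $\|v\|_{C([0,T_0]; H^1)} \le C(T_0, \|f\|_{C([0,T_0]; W^{\frac12-\eps,\infty})}) < \infty$ for every $T_0 > 0$, hence a global solution $v \in C(\R_+; H^1(\T))$.

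\textbf{Main obstacle.} The genuinely delicate point is \emph{not} the local theory — that is routine given Lemmas~\ref{LEM:BT} and~\ref{LEM:BOZ} — but verifying that the quadratic Duhamel bound can be upgraded to a global-in-time bound. Unlike a clean energy conservation law, here the forcing $f$ destroys any obvious conserved quantity, so one must argue that the $H^1$-norm cannot blow up in finite time. The cleanest route, which I would follow, is to observe that \eqref{XM1} is a compact perturbation of the BBM equation treated in \cite{BT09, Forlano}: writing $v = w + f$ formally recovers an equation of the form $\dt u = \vp(D)u + \vp(D)(u^2)$ with an inhomogeneity built from $f$, and the smoothing of $\vp(D)$ combined with the subcritical nature of the $H^1$ theory for BBM on $\T$ yields the needed global bound by a Gronwall-type argument on $\|v(t)\|_{H^1}$ after absorbing the quadratic self-interaction via the local time-step. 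Since this adaptation is entirely parallel to \cite[Section 2]{Forlano}, I would present the local argument in detail and then invoke those references for globalization, as indicated in the remark following Proposition~\ref{PROP:2}.
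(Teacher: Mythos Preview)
Your local theory matches the paper's approach almost exactly (the paper uses the direct smoothing $\vp(D): L^2 \to H^1$ together with H\"older's inequality for the cross term $vf$ and for $f^2$, rather than Lemma~\ref{LEM:BOZ}, but either route works).

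The globalization, however, is where the paper is explicit and you are not. The quadratic Duhamel bound you write down does \emph{not} close by itself---a bound of the form $y(t) \lesssim \int_0^t (y(t')^2 + \cdots)\,dt'$ allows finite-time blowup---and ``absorbing the quadratic self-interaction via the local time-step'' is not a mechanism unless you already have an a~priori bound to iterate against. The paper obtains that bound by a direct energy identity: multiply \eqref{XM1} by $(1-\dx^2)v$ and integrate, which gives
\[
\frac{d}{dt}\|v(t)\|_{H^1}^2 = \int_\T f\,\dx(v^2)\,dx + \int_\T f^2\,\dx v\,dx.
\]
The crucial point---which you do not isolate---is that the cubic self-interaction $\int_\T v^2\,\dx v\,dx$ vanishes identically (it is a total derivative), so the right-hand side is at most \emph{linear} in $\|v\|_{H^1}^2$ with coefficients depending only on $\|f\|_{L^4_x}$ and $\|f\|_{L^2_x}$. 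Gronwall then yields \eqref{XM7}. This is a two-line computation, and it is the actual content of the globalization; deferring to \cite{BT09, Forlano} is fine as a citation for the method, but you should know and state the cancellation explicitly rather than gesture at ``the same global well-posedness mechanism.''
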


\subsection{Local well-posedness}
\label{SUBSEC:LWP}

In this subsection, 
we prove local well-posedness of \eqref{XM1}
with general initial data $v_0$:
\begin{align}
\begin{cases}
\dt   v=  \vp(D) v +  \vp(D) 
\big( (v +  f)^2 \big) \\
 v|_{t=t_0} =v_0.
 \end{cases}
\label{XM2}
\end{align}

\begin{lemma}\label{LEM:LWP}
Given $t_0 \ge 0$, 
let $f \in C([t_0, t_0 + 1]; W^{s,\infty}(\T))$
for some
$s \ge 0$.
Let $0 \le \s \le 1$.
Then, given any $v_0 \in H^\s(\T)$, 
there exists a unique solution $v
\in  C([t_0 , t_0 + \tau]; H^\s(\T))$ to~\eqref{XM2}, 
where the local existence time $\tau \in (0, 1]$ 
depends only on 
$\|v_0\|_{H^\s}$ and $\|f \|_{C([t_0, t_0+ 1]; W^{s, \infty}_x)}$.
\end{lemma}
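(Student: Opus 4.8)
The plan is to solve \eqref{XM2} by a standard contraction mapping argument applied to the Duhamel formulation
\[
\Gamma v(t) = S(t - t_0) v_0 + \int_{t_0}^t S(t - t')\, \vp(D)\big((v + f)^2\big)(t')\, dt',
\qquad S(t) = e^{t\vp(D)}.
\]
Since the symbol $\vp(n) = -in/(1+n^2)$ from \eqref{phi2} is purely imaginary and bounded, $|e^{t\vp(n)}| = 1$, so $S(t)$ is a strongly continuous group of isometries on $H^\s(\T)$ for every $\s \in \R$; in particular $S(\cdot - t_0)v_0$ is a continuous $H^\s$-valued function of time, and so is the Duhamel integral once the integrand is shown to be continuous with values in $H^\s$.

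The analytic heart of the argument is the nonlinear estimate
\[
\big\| \vp(D)\big((v+f)^2\big) \big\|_{H^\s}
\les \|v\|_{H^\s}^2 + \|v\|_{H^\s}\, \|f\|_{W^{s,\infty}} + \|f\|_{W^{s,\infty}}^2
\]
together with its difference counterpart, obtained by writing $(v+f)^2 - (w+f)^2 = (v-w)(v+w+2f)$. Expanding $(v+f)^2 = v^2 + 2vf + f^2$, the $v^2$ contribution is controlled by Lemma \ref{LEM:BT} (with exponent $\s \ge 0$), giving $\|\vp(D)(v^2)\|_{H^\s} \les \|v\|_{H^\s}^2$. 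For the mixed term $vf$ and the pure forcing term $f^2$ I exploit that $\vp(D)$ is smoothing of order one, i.e.\ $\jb{n}^\s|\vp(n)| \les \jb{n}^{\s-1}$, hence $\|\vp(D)g\|_{H^\s} \les \|g\|_{H^{\s-1}}$; since $0 \le \s \le 1$ one has $L^2(\T) = H^0(\T) \embeds H^{\s-1}(\T)$, and H\"older's inequality together with $W^{s,\infty}(\T) \embeds L^\infty(\T)$ for $s \ge 0$ yields $\|vf\|_{L^2} \le \|v\|_{L^2}\|f\|_{L^\infty} \les \|v\|_{H^\s}\|f\|_{W^{s,\infty}}$ and $\|f^2\|_{L^2} \le \|f\|_{L^\infty}^2$. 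It is exactly here that both hypotheses $\s \ge 0$ and $\s \le 1$ are used, and I expect this to be the main (and really the only) delicate point: since possibly $s < \s$ (for instance $s = 0$, $\s = 1$), one cannot simply place $f$ in $H^\s$ and invoke Lemma \ref{LEM:BT}, so the one-derivative gain of $\vp(D)$ is essential. The difference estimate is proved identically, using Lemma \ref{LEM:BT} for $(v-w)(v+w)$ and the H\"older bound for $(v-w)f$.

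With these two estimates in hand, set $M = \|f\|_{C([t_0, t_0+1]; W^{s,\infty}_x)}$, $R = 2\|v_0\|_{H^\s} + 1$, and run the contraction on the ball $B_R = \{ v \in C([t_0, t_0+\tau]; H^\s(\T)) : \sup_t \|v(t)\|_{H^\s} \le R\}$. Bounding the time integral crudely by $\tau$ times the supremum over $[t_0, t_0+\tau]$, the estimates above give $\sup_t\|\Gamma v(t)\|_{H^\s} \le \|v_0\|_{H^\s} + C\tau(R^2 + RM + M^2)$ and a contraction factor $C\tau(2R + M)$; choosing $\tau = \tau(\|v_0\|_{H^\s}, M) \in (0,1]$ small enough that $C\tau(R^2 + RM + M^2) \le \tfrac12 R$ and $C\tau(2R+M) \le \tfrac12$ makes $\Gamma$ a contraction on $B_R$, and the Banach fixed point theorem produces the desired solution $v \in C([t_0, t_0+\tau]; H^\s(\T))$ (continuity in time being inherited from the properties of $S(t)$ noted above). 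Unconditional uniqueness in the full space $C([t_0, t_0+\tau]; H^\s(\T))$ then follows from the same difference estimate via a Gr\"onwall argument, or equivalently by covering $[t_0,t_0+\tau]$ by finitely many short subintervals on which the difference map is a contraction; the rest is routine.
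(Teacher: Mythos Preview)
Your proof is correct and follows essentially the same approach as the paper: expand $(v+f)^2$, use Lemma~\ref{LEM:BT} for the $v^2$ term, and use the one-derivative smoothing of $\vp(D)$ together with H\"older's inequality to handle $vf$ and $f^2$ in $L^2$ (this is where $0\le\s\le1$ enters), then close by contraction. The paper simply omits the details of the fixed-point argument that you spell out.
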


\begin{proof}
Without loss of generality, we assume that $t_0 = 0$.
Then, by writing \eqref{XM2} in the Duhamel formulation, we have 
\begin{align}
v(t)  = S(t) v_0 + \I\big(\vp(D)(v^2)\big)(t)
+ 2 \I\big(\vp(D)(v f)\big)(t)
+ \I\big(\vp(D)(f^2)\big)(t), 
\label{XM3}
\end{align}

\noi
where $\I$ is the Duhamel integral operator defined in \eqref{exp2}.
From Lemma \ref{LEM:BT}, we have
\begin{align}
\| \vp(D)(v^2)\|_{C_T H^\s_x}
& \les \|v \|_{C_T H^\s_x}^2
\label{XM4}
\end{align}

\noi
for $\s \ge 0$.
From the smoothing property of $\vp(D)$ in \eqref{phi1}
and H\"older's inequality, 
we have 
\begin{align}
\begin{split}
\| \vp(D)(vf)\|_{C_T H^\s_x}
& \les 
\| vf\|_{C_T L^2_x}
 \le \|v \|_{C_T L^2_x}
\| f\|_{C_T L^{\infty}_x}\\
& \le \|v \|_{C_T H^\s_x}
\| f\|_{C_T W^{s, \infty}_x}
\end{split}
\label{XM5}
\end{align}

\noi
for $0 \le \s \le 1$ and $ s \ge 0$.
Similarly, we have 
\begin{align}
\| \vp(D)(f^2)\|_{C_T H^\s_x}
& \les 
\| f^2\|_{C_T L^2_x}
\le \| f\|_{C_T W^{s, \infty}_x}^2
\label{XM6}
\end{align}

\noi
\noi
for $\s \le 1$ and $ s \ge 0$.
Then, the claimed local well-posedness
of \eqref{XM2} follows from a standard contraction argument
applied to \eqref{XM3} with \eqref{XM4}, 
\eqref{XM5}, and \eqref{XM6}.
We omit details.
\end{proof}

\subsection{Global well-posedness}
\label{SUBSEC:GWP}

In view of Lemma \ref{LEM:LWP}, 
Proposition \ref{PROP:WP1} follows
once we prove
that the $H^1$-norm of the solution $v$ to \eqref{XM1}
remains finite on each finite time interval.
Namely, 
\begin{align}
\sup_{0 \le t \le T} \| v(t)\|_{H^1} \le C\big(T, 
\|f\|_{C_T W^{\frac 12 - \eps, \infty}_x}\big) < \infty
\label{XM7}
\end{align}

\noi
for each $T \gg 1$.

Fix $T \gg 1$.
By multiplying \eqref{XM1}
by $(1-\dx^2) v$ and integrating by parts, we have 
\begin{align*}
\frac{d}{dt} \| v(t) \|_{H^1_x}^2 
=  \int_\T  f \dx v^2  dx
+ \int_\T  f^2 \dx v  dx, 
\end{align*}

\noi
where, for simplicity of the presentation, 
we suppressed the $t$-dependence on the right-hand side.
By H\"older's inequality,  the algebra property of $H^1(\T)$, 
and Cauchy's inequality,  
we have 
\begin{align}
\begin{split}
\frac{d}{dt} \| v(t) \|_{H^1_x}^2
& \le 
\|f\|_{C_T L^2_x}\|v^2(t)\|_{H^1_x}
+ \|f\|_{C_T L^4_x}^2\|v(t)\|_{H^1_x}\\
& \les 
\|f\|_{C_T L^4_x}^4
+ \big(1 + 
\|f\|_{C_T L^2_x}\big)\|v(t)\|_{H^1_x}^2.
\end{split}
\label{XM8}
\end{align}

\noi
Then, by applying Gronwall's inequality with \eqref{XM8}, 
we obtain \eqref{XM7}.
This proves Proposition~\ref{PROP:WP1}
and hence
Proposition \ref{PROP:2}.

\section{Proof of Theorem \protect\ref{THM:3}}
\label{SEC:conv}

In this section, we present a proof of Theorem \ref{THM:3}.
The following lemma summarizes the 
convergence result on $(z_N, Z_N)$ needed to prove 
Theorem \ref{THM:3}.

\begin{lemma}
\label{LEM:sto}

Given $\al\leq \frac14$, 
let $z_N$,   $Z_N$, and $Z$  be as in \eqref{lin1}, \eqref{I3}, and \eqref{lin2}, respectively.

\medskip

\noi
\textup{(i)}
Let $s_1 <-\frac{1}{4}$.
Then, 
the random linear solution $z_N (t)  
 = S(t) C_{\al, N} \P_N u_0$
defined in~\eqref{lin1}
 converges to $0$ in $C(\R_+; W^{s_1,\infty}(\T) )$ almost surely,
as $N \to \infty$.

\medskip

\noi
\textup{(ii)}
Let $s_1< -\frac{1}{4}$ and  $s_2<\frac{1}{2}$.
Then, there exist a probability space $(\wt \O,  \wt \sF, \wt \PP)$, 
$C(\R_+; W^{s_1, \infty}(\T))$-valued random variable
$\{\wt z_N\}_{N \in \N}$,  and 
$C(\R_+; W^{s_2, \infty}(\T))$-valued random variables
$\{\wt Z_N\}_{N \in \N}$ and $\wt Z$
with 
\begin{align}
\Law(\wt z_N) = \Law(z_N), \quad  
\Law(\wt Z_N) = \Law(Z_N) 
\quad\text{and}\quad 
\Law(\wt Z) = \Law(Z) 
\label{XY0}
\end{align}

\noi
such that, as $N \to \infty$, 
$(\wt z_N, \wt Z_N)$ converges to $(0, \wt Z)$
in 
$C(\R_+; W^{s_1, \infty}(\T))
\times 
C(\R_+; W^{s_2, \infty}(\T))$, $\wt \PP$-almost surely.

\end{lemma}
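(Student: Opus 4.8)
The plan is to prove part (i) directly and to deduce part (ii) from part (i), Theorem \ref{THM:1}, and the Skorokhod representation theorem (Lemma \ref{LEM:Sk}).

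For part (i), observe that $\vp(n)$ is purely imaginary (see \eqref{phi2}), so $|e^{t\vp(n)}|=1$ and hence, from \eqref{lin1},
\begin{align*}
\E\big[|\ft z_N(t,n)|^2\big] = C_{\al,N}^2\,\jb{n}^{-2\al}\ind_{|n|\le N},
\end{align*}
while $|e^{(t+h)\vp(n)}-e^{t\vp(n)}|\les |h|^{\ta}$, uniformly in $n\in\Z$, for any $\ta\in(0,1)$ and $|h|\le 1$ (interpolating the bounds $2$ and $|h\vp(n)|$ and using $|\vp(n)|\le\frac12$). Since $z_N(t)$ lies in the first Wiener chaos $\H_1$, this puts us in the setting of Lemma \ref{LEM:OOT} with $k=1$. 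When $\al<\frac14$, \eqref{CN} gives $C_{\al,N}^2\sim\jb{N}^{2\al-\frac12}$, and an elementary case analysis (separating the contributions of $|n|\sim 1$ and $|n|\sim N$) shows that, for any $s_1<s_0<-\frac14$, one has $C_{\al,N}^2\jb{n}^{-2\al}\ind_{|n|\le N}\les N^{-\g}\jb{n}^{-1-2s_0}$ for some $\g=\g(\al,s_0)>0$, and likewise for the time-increment bound with an extra factor $|h|^{2\ta}$. The ``furthermore'' parts of Lemma \ref{LEM:OOT}\,(i)--(ii), applied with $X_N=z_N$ and $X=0$, then yield $z_N\to 0$ almost surely in $C([0,T];W^{s_1,\infty}(\T))$ for every $T>0$, hence in $C(\R_+;W^{s_1,\infty}(\T))$ with the compact-open topology. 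The endpoint $\al=\frac14$ must be treated separately, since there $C_{\al,N}^2\sim(\log\jb{N})^{-\frac12}$ decays only logarithmically and the polynomial-rate hypothesis of Lemma \ref{LEM:OOT} fails when applied to $z_N$ directly. Instead we exploit that $s_1<-\frac14=\al-\frac12$: writing $z_N=C_{\al,N}\P_N\zf$ with $\zf=S(\cdot)u_0$, and noting that $\ind_{|n|>N}\jb{n}^{-\frac12}\le N^{-\g}\jb{n}^{-1-2s_0}$ for $s_1<s_0<-\frac14$ and $\g=\g(s_0)>0$ (the Fourier cutoff supplies the polynomial gain in $N$), Lemma \ref{LEM:OOT} applied with $X_N=\P_N\zf$ and $X=\zf$ gives $\zf\in C(\R_+;W^{s_1,\infty}(\T))$ and $\P_N\zf\to\zf$ there, almost surely; in particular $\{\P_N\zf\}_N$ is almost surely bounded in $C([0,T];W^{s_1,\infty}(\T))$ for each $T$, so $z_N=C_{\al,N}\P_N\zf\to 0$ almost surely, since $C_{\al,N}\to 0$ by \eqref{CN1}.

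For part (ii), part (i) gives $z_N\to 0$ in probability in $C(\R_+;W^{s_1,\infty}(\T))$, while Theorem \ref{THM:1} gives $Z_N\to Z$ in law in $C(\R_+;W^{s_2,\infty}(\T))$ for $s_2<\frac12$. Since the limit of $z_N$ is the deterministic element $0$ and $z_N,Z_N$ are defined on a common probability space, a standard Slutsky-type argument (testing weak convergence against bounded Lipschitz functionals, valid on a separable metric space) upgrades this to joint convergence in law of $(z_N,Z_N)$ to $(0,Z)$ in $\mathcal{E}:=C(\R_+;W^{s_1,\infty}(\T))\times C(\R_+;W^{s_2,\infty}(\T))$. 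The space $\mathcal{E}$ is Polish: each factor is separable by Remark \ref{REM:Sk2} and completely metrizable, since the compact-open topology on $C(\R_+;W^{s,\infty}(\T))$ is generated by the countable family of complete sup-norm distances over the intervals $[0,j]$, $j\in\N$, and $W^{s,\infty}(\T)$ is a Banach space. Applying the Skorokhod representation theorem (Lemma \ref{LEM:Sk}) to the $\mathcal{E}$-valued sequence $(z_N,Z_N)$, we obtain a probability space $(\wt\O,\wt\sF,\wt\PP)$ and $\mathcal{E}$-valued random variables $(\wt z_N,\wt Z_N)$ with $\Law(\wt z_N,\wt Z_N)=\Law(z_N,Z_N)$, converging $\wt\PP$-almost surely to some $(\wt z_\infty,\wt Z)$ with $\Law(\wt z_\infty,\wt Z)=\Law(0,Z)$. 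Since the first marginal of $\Law(0,Z)$ is the Dirac mass at $0$, we have $\wt z_\infty=0$ $\wt\PP$-almost surely, so $\wt z_N\to 0$ and $\wt Z_N\to\wt Z$ almost surely; projecting the joint law identity onto the marginals yields \eqref{XY0}. This proves (ii).

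The main obstacle is the endpoint case $\al=\frac14$ in part (i): the renormalization constant then decays only at a logarithmic rate, so Lemma \ref{LEM:OOT} cannot be applied to $z_N$ directly, and one must reroute through the polynomially controlled convergence $\P_N\zf\to\zf$ in $W^{s_1,\infty}(\T)$ — available precisely because at $\al=\frac14$ the target regularity $s_1<-\frac14$ lies strictly below the regularity $\al-\frac12$ of the linear evolution $\zf$ — combined with the mere fact that $C_{\al,N}\to 0$. All the remaining steps are routine.
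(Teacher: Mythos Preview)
Your proof is correct and follows essentially the same approach as the paper. Both arguments use Lemma~\ref{LEM:OOT} as the main tool, split into the cases $\al<\frac14$ and $\al=\frac14$, and for part~(ii) combine part~(i) with Theorem~\ref{THM:1} and Skorokhod's theorem. The only cosmetic difference is that for $\al<\frac14$ the paper rescales $y_N=N^\dl z_N$ and shows $y_N$ converges to some limit before multiplying back by $N^{-\dl}$, whereas you directly apply the ``furthermore'' clause of Lemma~\ref{LEM:OOT} with $X=0$; for $\al=\frac14$ both arguments are the same (the paper's $y_N=C_{\frac14,N}^{-1}z_N$ is exactly your $\P_N\zf$), and for part~(ii) your Slutsky-type step is precisely \cite[Theorem~3.9]{PB71} invoked in the paper.
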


\begin{proof}
(i)  
We first consider the case $\al < \frac 14$.
Given $s_1 < - \frac 14$,
choose small $\dl > 0$ such that 
 $\dl < \min\big(\frac 14 - \al, - \frac 14 - s_1\big)$.
 Let $y_N = N^\dl z_N$.
Then, from \eqref{lin1} with \eqref{CN} and $|n|\le N$, we have
\begin{align}
\E\big[|\ft y_N(t, n)|^2\big]
\les \jb{n}^{- \frac 12 + 2\dl}
\label{YY1}
\end{align}

\noi
for any $n \in \Z$
and $t \in \R_+$.
By using \eqref{TT2}, we also have 
\begin{align}
\E\big[|\dl_h \ft y_N(t, n)|^2\big]
\les \jb{n}^{- \frac 12 + 2\dl}|h|^2, 
\label{YY2}
\end{align}

\noi
where $\dl_h$ is the difference operator defined in \eqref{dl1}.
Hence, it follows from Lemma~\ref{LEM:OOT} that 
 $y_N$
converges almost surely to some limiting process $y$ in $C(\R_+; W^{s_1, \infty}(\T))$
as $N \to \infty$.
Recalling that $z_N = N^{-\dl}y_N$, 
we then conclude that $z_N$
converges almost surely to $0$
in $C(\R_+; W^{s_1, \infty}(\T))$
as $N \to \infty$.

Next, we consider the case $\al = \frac 14$.
In this case, we set 
 $y_N = C_{\frac 14, N}^{-1} z_N$, 
 where $C_{\frac 14, N}$ is as in~\eqref{CN}.
Then, a simple computation shows that \eqref{YY1} and \eqref{YY2} hold
with $\dl = 0$.
Hence, by proceeding as above with Lemma \ref{LEM:OOT}, 
we conclude that 
$z_N$
converges almost surely to $0$
in $C(\R_+; W^{s_1, \infty}(\T))$
as $N \to \infty$.

\medskip

\noi
(ii)
From Part (i) of this lemma
and 
Theorem \ref{THM:1}
with together \cite[Theorem 3.9]{PB71}, 
we see that $(z_N, Z_N)$ converges in law to $(0, Z)$ 
in 
$C(\R_+; W^{s_1, \infty}(\T))
\times 
C(\R_+; W^{s_2, \infty}(\T))$
with respect to the probability
measure $\PP$ as $N \to \infty$.
Hence, the claim follows from the Skorokhod representation theorem
(Lemma~\ref{LEM:Sk}); see also Remark \ref{REM:Sk2}.
\end{proof}

We conclude this section by  presenting a proof of Theorem \ref{THM:3}.

\begin{proof}[Proof of Theorem \ref{THM:3}]

Let $\{\wt z_N\}_{N \in \N}$,  $\{\wt Z_N\}_{N \in \N}$, and $\wt Z$ be as in Lemma \ref{LEM:sto}\,(ii).
Consider the following equation:
\begin{align}
\begin{cases}
\dt \wt v_N= 
\vp(D) \wt v_N 
+ \vp(D) 
\big(
(\wt v_N +  \wt z_N +\wt Z_N )^2 - \wt z_N^2\big)\\
\wt  v_N|_{t=0} = 0, 
\end{cases}
\label{XY1}
\end{align}

\noi
which is nothing but 
\eqref{BBM4}, where $(z_N, Z_N)$ is replaced by $(\wt z_N, \wt Z_N)$.
Given $N \in \N$, 
we have $\wt z_N, \wt Z_N \in 
C(\R_+; C^\infty(\T))$ and thus
there exists a unique global-in-time solution 
$\wt v_N
\in C(\R_+; H^{\frac 34 - \eps}(\T))$ to \eqref{XY1}.
We, however, point out that there is no uniform (in $N$)
control on the $H^{\frac 34 - \eps}$-norm of $\wt v_N(t)$.

In the following, we show that the solution
$\wt v_N$ to \eqref{XY1} converges $\wt \PP$-almost surely to 
the solution $\wt v$ to 
\begin{align}
\begin{cases}
\dt   \wt v=  \vp(D) \wt v +  \vp(D) 
\big( (\wt v +  \wt Z)^2 \big) \\
 \wt v|_{t=0} =0
 \end{cases}
\label{XY3}
\end{align}

\noi
in $C(\R_+; H^{\frac 34 - \eps}(\T))$.
Note that 
\eqref{XY3} is nothing but 
\eqref{BBM4c}, where $Z$ is replaced by $\wt Z$
and, in particular, 
Proposition~\ref{PROP:2}
guarantees almost sure global well-posedness
of~\eqref{XY3}
such that $\wt  v \in C(\R_+; H^1(\T))$.
Moreover, given any finite $T \gg 1$, 
there exists an almost surely finite constant $C_\o(T) > 0$
such that  the solution $\wt v = \wt v^\o$ to \eqref{XY3} satisfies
the following growth bound:
\begin{align}
\sup_{0 \le t \le T} \| \wt v(t)\|_{H^1} \le C_\o(T).
\label{XY4a}
\end{align}

\noi
The bound \eqref{XY4a} plays a crucial role
in establishing convergence of $\wt v_N$ to $\wt v$ {\it globally} in time.

By writing \eqref{XY1} and \eqref{XY3} in the Duhamel formulation, we have 
\begin{align}
\begin{split}
\wt v_N & = \I\big(\vp(D)(\wt v_N + \wt Z_N)^2\big)+2 \I\big((\vp(D)(\wt z_N \wt Z_N)\big)
+ 2 \I\big(\vp(D)(\wt v_N \wt z_N)\big), \\
\wt v & = \I\big(\vp(D)(\wt v + \wt Z)^2\big), 
\end{split}
\label{XY2}
\end{align}

\noi
where $\I$ denotes the Duhamel integral operator defined in \eqref{exp2}.
By the definition of the compact-open topology,
it suffices to show that $\wt v_N$
converges $\wt \PP$-almost surely to $\wt v$ in $C([0, T]; H^{\frac 34 - \eps}(\T))$
for each given $T \gg1$.

Fix the target time $T \gg1$.
Let $\Si \subset \wt \O$ with $\wt \PP(\Si) = 1$
such that 
Proposition \ref{PROP:2}
(for \eqref{XY3})
and 
Lemma \ref{LEM:sto} hold.
Moreover, we assume that
for any $\o \in \Si$, 
 the equation \eqref{XY3}
is  globally well-posed
with $\wt Z = \wt Z^\o$
such that  $\wt  v \in C(\R_+; H^1(\T))$, 
satisfying 
\eqref{XY4a}.
In the following, we fix $\o \in \Si$.

Fix small $\dl > 0$.
From Lemma \ref{LEM:BOZ} followed by Lemma \ref{LEM:sto}, we have
\begin{align}
\begin{split}
\big\| \I\big((\vp(D)(\wt z_N \wt Z_N)\big)\big\|_{C_TH^{\frac 34 - \eps}_x}
& \les T 
\| \wt z_N \wt Z_N\|_{C_TH^{-\frac 14 - \eps}_x}\\
& \les T 
\| \wt z_N \|_{C_TW^{ - \frac14 - \eps, \infty}_x}
\|  \wt Z_N\|_{C_TW^{ \frac 12- \eps, \infty}_x}
\ll \dl 
\end{split}
\label{XY5}
\end{align}

\noi
for any $N \ge N_\o (T, \dl)$.
From Lemma \ref{LEM:sto} and \eqref{XY4a}, we have 
\begin{align}
\begin{split}
\big\| \I\big((\vp(D)(\wt v \wt z_N)\big)\big\|_{C_TH^{\frac 34 - \eps}_x}
& \les T 
\|  \wt v\|_{C_TH^{\frac 34 - \eps}_x}
\| \wt z_N \|_{C_TW^{- \frac14 - \eps, \infty}_x}
 \ll \dl
\end{split}
\label{XY6}
\end{align}

\noi
for any $N \ge N_\o (T, \dl)$.
Fix an interval 
$I \subset [0, T]$ with $|I| = \tau$.
Then, 
from Lemma \ref{LEM:sto}, we have 
\begin{align}
\begin{split}
\big\| \I\big((\vp(D)((\wt v_N- \wt v) \wt z_N)\big)\big\|_{C_I H^{\frac 34 - \eps}_x}
& \les \tau
\|  \wt v_N - \wt v\|_{C_I H^{\frac 34 - \eps}_x}
\| \wt z_N \|_{C_TW^{ - \frac14 - \eps, \infty}_x}\\
& \ll \tau 
\|  \wt v_N - \wt v\|_{C_IH^{\frac 34 - \eps}_x}
\end{split}
\label{XY7}
\end{align}

\noi
for any $N \ge N_\o (T, \dl)$, 
uniformly in $I \subset [0, T]$.
By H\"older's and Sobolev's inequalities, 
Lemma~\ref{LEM:sto},  and \eqref{XY4a}, we have 
\begin{align}
\begin{split}
 \big\| & \I\big(\vp(D)(\wt v_N + \wt Z_N)^2\big)
-  \I\big(\vp(D)(\wt v + \wt Z)^2\big)\big\|_{C_IH^{\frac 34 - \eps}_x}\\
& \les
\tau \Big(\| \wt v_N^2 - \wt v^2\|_{C_IL^2_x}
+ \| \wt v_N \wt Z_N  - \wt v \wt Z\|_{C_IL^2_x}
+ \| \wt Z_N^2 - \wt Z^2\|_{C_TL^2_x}\Big)\\
& \les
\tau \Big(\| \wt v_N + \wt v\|_{C_IH^{\frac 34 - \eps}_x} + 
 \| \wt Z_N\|_{C_TW^{\frac 12 - \eps, \infty}_x}\Big)
\| \wt v_N - \wt v\|_{C_I H^{\frac 34 - \eps}_x}\\
& \quad + \tau\Big(\| \wt v\|_{C_TH^1_x}
+ \| \wt Z_N  +  \wt Z\|_{C_TW^{\frac 12 - \eps, \infty}_x}\Big)
\| \wt Z_N  -  \wt Z\|_{C_TW^{\frac 12 - \eps, \infty}_x}\\
& \les
\tau \Big(C_\o(T) + 
\| \wt v_N -  \wt v\|_{C_IH^{\frac 34 - \eps}_x} \Big)
\| \wt v_N - \wt v\|_{C_I H^{\frac 34 - \eps}_x}\\
& \quad + 
\tau C_\o(T) 
\| \wt Z_N  -  \wt Z\|_{C_TW^{\frac 12 - \eps, \infty}_x}
\end{split}
\label{XY8}
\end{align}

\noi
for any $N \ge N_\o (T, \dl)$.

In the following, we assume  $N \ge N_\o (T, \dl)$
such that the estimates above hold.
Let $I_0 = [0, \tau]$
with $0 < \tau \le 1$ (to be chosen later).
Then, from \eqref{XY2}, \eqref{XY5}, 
\eqref{XY6}, \eqref{XY7}, and \eqref{XY8}, we have 
\begin{align}
\begin{split}
 \| \wt v_N - \wt v\|_{C_{I_0} H^{\frac 34 - \eps}_x}
&  \le \tau \Big(C_\o(T) + 
\| \wt v_N -  \wt v\|_{C_{I_0}H^{\frac 34 - \eps}_x} \Big)
\| \wt v_N - \wt v\|_{C_{I_0} H^{\frac 34 - \eps}_x}\\
& \quad + 
C_1(T, \o) 
\| \wt Z_N  -  \wt Z\|_{C_TW^{\frac 12 - \eps, \infty}_x}
+ c_2 \dl 
\end{split}
\label{XY9}
\end{align}

\noi
for some small $c_2>0$.
Since $v_N(0) = v(0) = 0$,
it follows from the continuity (in time)
of $\wt v_N$ and $\wt v$ that 
\begin{align}
 \| \wt v_N - \wt v\|_{C_{I_0'} H^{\frac 34 - \eps}_x} \le 1
\label{XY10}
\end{align}

\noi
on some subinterval $I_0' = I_0' (N, \o) = [0, \tau']
\subset I_0= [0, \tau]$.
Noting that \eqref{XY9} also holds on $I_0'$, 
it follows from \eqref{XY9} and \eqref{XY10}
and taking $\tau = \tau(T, \o)> 0$ sufficiently small, 
independent of $N \in \N$, 
(which also makes $\tau' > 0$ small) that 
\begin{align}
\begin{split}
 \| \wt v_N - \wt v\|_{C_{I_0'} H^{\frac 34 - \eps}_x}
& \le 2 C_1(T, \o) 
\| \wt Z_N  -  \wt Z\|_{C_TW^{\frac 12 - \eps, \infty}_x}
+ 2c_2 \dl 
\\
& \le 3 c_2 \dl \ll 1, 
\end{split}
\label{XY11}
\end{align}

\noi
where the second step follows from Lemma \ref{LEM:sto}, 
provided that  $N \ge N_\o (T, \dl)$
(by possibly taking $N_\o (T, \dl)$ larger, independently of $I_0' \subset I_0$).
Hence, by a standard continuity argument, 
we have 
\begin{align*}
 \| \wt v_N - \wt v\|_{C_{I_0} H^{\frac 34 - \eps}_x} \le 1
\end{align*}

\noi
on the  first interval $I_0$
and thus from \eqref{XY11}, we obtain
\begin{align}
 \| \wt v_N - \wt v\|_{C_{I_0} H^{\frac 34 - \eps}_x} \le 3c_2 \dl
\label{XY13}
\end{align}

\noi
for any   $N \ge N_\o (T, \dl)$.

Let $I_1 = [\tau, 2\tau]$.
 Proceeding as before, we have 
\begin{align}
\begin{split}
 \| \wt v_N - \wt v\|_{C_{I_1} H^{\frac 34 - \eps}_x}
&  \le 
\| \wt v_N(\tau) -  \wt v(\tau)\|_{H^{\frac 34 - \eps}}\\
& \quad + 
\tau \Big(C_\o(T) + 
\| \wt v_N -  \wt v\|_{C_{I_1}H^{\frac 34 - \eps}_x} \Big)
\| \wt v_N - \wt v\|_{C_{I_1} H^{\frac 34 - \eps}_x}\\
& \quad + 
C_1(T, \o) 
\| \wt Z_N  -  \wt Z\|_{C_TW^{\frac 12 - \eps, \infty}_x}
+ c_2 \dl. 
\end{split}
\label{XY14}
\end{align}

\noi
From \eqref{XY13}, we have
\begin{align*}
 \| \wt v_N (\tau) - \wt v(\tau) \|_{H^{\frac 34 - \eps}_x} \le 3c_2 \dl \ll1.
\end{align*}

\noi
Then, 
it follows from  the continuity (in time)
of $\wt v_N$ and $\wt v$ that 
\begin{align}
\| \wt v_N - \wt v\|_{C_{I_1'} H^{\frac 34 - \eps}_x} \le 1
\label{XY15}
\end{align}

\noi
on some subinterval  $I_1' = I_1'(N, \o)
= [\tau, \tau + \tau'] \subset I_1 \subset [\tau, 2\tau]$.
Then, by noting that \eqref{XY14}
also holds on $I_1'$ and proceeding with   a continuity argument 
with \eqref{XY14} and \eqref{XY15}
as before
(see also \eqref{XY11}), 
we have
\begin{align*}
 \| \wt v_N - \wt v\|_{C_{I_1} H^{\frac 34 - \eps}_x}
& \le 
2 \| \wt v_N(\tau) -  \wt v(\tau)\|_{H^{\frac 34 - \eps}}
+ 3c_2 \dl 
\\
& \le 9 c_2 \dl\ll1, 
\end{align*}

\noi
where the last step 
can be guaranteed by taking 
$N_\o(T, \dl)$ sufficiently large;
see \eqref{XY5} and~\eqref{XY6}.
We emphasize that we used the same $\tau = \tau(T, \o)$
from the first step on  $I_0$.

By iterating this process on $I_j = [j \tau,( j+1)\tau]$, 
$j = 2, \dots, \big[\frac T\tau\big]$, 
we obtain
\begin{align}
 \| \wt v_N - \wt v\|_{C_{I_j} H^{\frac 34 - \eps}_x}
& \le 
\bigg(\sum_{k = 0}^{j} 2^k\bigg)3c_2 \dl
\le 3\cdot 2^{j+1}c_2 \dl
\label{XY16}
\end{align}

\noi
for any   $N \ge N_\o (T, \dl)$, 
provided that 
we take 
$\dl > 0$ sufficiently small 
and hence we take
$N_\o (T, \dl)$ sufficiently large
such that 
\[ 3\cdot 2^{[ \frac T\tau]+1}c_2 \dl \ll 1.\]

Therefore, from \eqref{XY16}, 
we obtain
\begin{align*}
 \| \wt v_N - \wt v\|_{C_{T} H^{\frac 34 - \eps}_x}
\le 4\cdot 2^{j+1}c_2 \dl
\end{align*}

\noi
for any   $N \ge N_\o (T, \dl)$.
Since the choice of $\dl > 0$ was arbitrary, 
this proves convergence of 
$\wt v_N$ to $\wt v$
in $C([0, T]; H^{\frac 34 - \eps}(\T))$
for any $\o \in \Si$.
Since the choice of $T \gg1 $
was arbitrary, 
we thus conclude almost sure convergence of 
$\wt v_N$ to $\wt v$
in $C(\R_+; H^{\frac 34 - \eps}(\T))$.

Let  $\wt u_N = \wt z_N + \wt Z_N + \wt v_N$
and $\wt u = \wt Z + \wt v$.
Then, together 
with Lemma \ref{LEM:sto}, 
we see that $\wt u_N$ converges almost surely 
to $\wt u$  
in $C(\R_+; H^{- \frac 14 - \eps}(\T))$ as $N \to \infty$.
Note from \eqref{XY0} that $\wt u_N$
and $\wt u$ satisfy~\eqref{BBM2} and~\eqref{BBM4d} in law,\footnote{Namely, 
by replacing the random initial data $u_0$
in \eqref{BBM2} and the spatial white noise $\ze$ in \eqref{BBM4d} 
with some appropriate $\wt u_0$ and $\wt \ze$, respectively, 
such that 
$\Law (\wt u_0) = \Law (u_0)$
and 
$\Law (\wt \ze) = \Law (\ze)$.}
 respectively.
Then, from the uniqueness of solutions, 
we obtain
\begin{align*}
\Law(\wt u_N) = \Law(u_N) \qquad  
\text{and}\qquad 
\Law(\wt u) = \Law(u), 
\end{align*}

\noi
where $u_N$ and $u$ are the solutions
to 
\eqref{BBM2} and~\eqref{BBM4d}, respectively.
Therefore, we conclude that 
the solution  $u_N$ to \eqref{BBM2}
converges in law
the solution  $u$ to \eqref{BBM4d}
in $C(\R_+; H^{-\frac 14 - \eps}(\T))$ as $N \to \infty$.
This concludes the proof of Theorem \ref{THM:3}.
\end{proof}

\appendix

\section{On the stochastic BBM with a rough space-time noise}
\label{SEC:A}

In this appendix, we briefly discuss renormalization beyond variance blowup
for the stochastic BBM
\eqref{BBM6}:
\begin{align}
\dt u-\partial_{txx}u+\partial_{x}u
+\partial_{x}(u^{2})  = \jb{\dx}^{\al} \dx \xi,   
\label{XBBM0}
\end{align}

\noi
Here, $\xi$ denotes a (Gaussian) space-time white noise 
on $\R_+ \times \T$
whose space-time covariance is (formally) given by 
\begin{align*}
 \E[ \xi(t_1, x_1)\xi(t_2, x_2) ] = \dl (t_1 - t_2) \dl(x_1 - x_2) 
\end{align*} 

\noi
for $t_1, t_2 \in \R_+$  and $x_1, x_2 \in \T$
with $\dl$ denoting the Dirac delta function.
As mentioned in Remark \ref{REM:SBBM1}\,(ii), 
 a slight modification of the argument in \cite{Forlano}
 shows 
that  variance blowup occurs
for  $\al \ge \frac 34$.
Our goal in this appendix is to prove an analogue
of Theorem~\ref{THM:3}
for~\eqref{XBBM0}.

\subsection{Renormalization of the stochastic BBM 
beyond variance blowup}
\label{SUBSEC:A1}

As in Hairer's work \cite{Hairer} (see \eqref{KPZ2}), we
introduce a vanishing multiplicative renormalization constant on the noise:
\begin{align}
\begin{cases}
\dt u_N-\partial_{txx}u_N+\partial_{x}u_N
+\partial_{x}(u_N^{2})  = C_{1-\al, N}  \P_N \jb{\dx}^\al  \dx \xi\\
u|_{t = 0} = v_0, 
\end{cases}
\label{XBBM1}
\end{align}

\noi
where $C_{1-\al, N}$ is as in \eqref{CN} (with $\al$ replaced by $1-\al$).
With $\vp(D)$ as in~\eqref{phi1}, we can rewrite \eqref{XBBM1}
as 
\begin{align}
\begin{cases}
\dt u_N= \vp(D) u_N
+\vp(D)(u_N^{2})  -  C_{1-\al, N}  \P_N \vp(D) \jb{\dx}^\al  \xi\\
u|_{t = 0} = v_0.
\end{cases}
\label{XBBM2}
\end{align}

Let $\zs_N$ be the solution to the following renormalized linear stochastic equation:
\begin{align*}
\begin{cases}
\dt \zs_N= \vp(D) \zs_N
 -  C_{1-\al, N}  \P_N \vp(D) \jb{\dx}^\al  \xi\\
\zs_N|_{t = 0} = 0.
\end{cases}
\end{align*}

\noi
Namely, $\zs_N$ is the renormalized stochastic convolution given by
\begin{align}
\zs_N(t) = -  C_{1-\al, N}  \int_0^t 
S(t - t')
 \P_N \vp(D) \jb{\dx}^\al  \xi (dt'), 
\label{sto2}
\end{align}

\noi
where $S(t) = e^{t\vp(D)}$.
See Section \ref{SUBSEC:A2}
for a precise meaning of \eqref{sto2}.

We then define $\Ys_N$ and $\Zs_N$ by 
\begin{align}
\Ys_N =  - \P_{\ne 0}(\zs_N^2)
\qquad \text{and}\qquad
\Zs_N 
=  \I\big(\vp(D) (\zs_N)^2\big)
= - \I(\vp(D) \Ys_N), 
\label{sto2a}
\end{align}

\noi
where 
$ \P_{\ne 0}$ is the (spatial) frequency projection onto non-zero frequencies
and 
$\I$ denotes the Duhamel integral operator defined in \eqref{exp2}.
We note that, in order to obtain the limiting equation \eqref{XBBM4}, 
we need to study convergence in law of 
$\Ys_N$, not of $\Zs_N$.

\begin{proposition}
\label{PROP:4}

Let $\al\ge \frac 34$.
Given any $s <-  \frac 12$, 
$\Ys_N = -  \P_{\ne 0}(\zs_N^2)$ defined in~\eqref{sto2a}
converges in law to a unique limit  $\upze$ 
in $C(\R_+; W^{s,\infty}(\T))$
as $N \to \infty$, 
where $\upze$ is a centered 
Gaussian process
whose space-time covariance  \textup{(}when tested against test functions
$\psi_1, \psi_2 \in \D(\T)$\textup{)} is given by 
\begin{align}
\begin{aligned}
&   \E \Big[\jb{\upze (t_1), \psi_1} \jb{ \upze (t_2), \psi_2}\Big]
 =  
(t_1 \wedge t_2)^2
 \sum_{n \in \Z^*}
\cj{\ft \psi_1(n)} \ft \psi_2(n)
\end{aligned}
\label{sto2c}
\end{align}

\noi
for any $t_1, t_2 \in \R_+$
and $\psi_1, \psi_2 \in \D(\T)$.

\end{proposition}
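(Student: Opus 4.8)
The plan is to mirror the proof of Theorem~\ref{THM:1} in Section~\ref{SEC:Z}: first establish uniqueness of a possible limit of $\{\Ys_N\}_{N\in\N}$ at the level of finite-dimensional marginals in time, by means of the fourth moment theorem (Lemma~\ref{LEM:FMT}); then prove tightness of $\{\Ys_N\}_{N\in\N}$ in $C(\R_+;W^{s,\infty}(\T))$ for $s<\frac 12$ \emph{(wait, for $s<-\frac12$)}; and finally combine the two via the Prokhorov theorem (Lemma~\ref{LEM:Pro}). The only structural novelty compared with Section~\ref{SEC:Z} is that the relevant Gaussian structure is now the isonormal Gaussian process over $L^2(\R_+\times\T)$ generated by the space-time white noise $\xi$ (see Subsection~\ref{SUBSEC:A2}), so that second moments of $\zs_N$ come from the It\^o isometry rather than from \eqref{mom1}. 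Using that $\vp(n)$ is purely imaginary with $|\vp(n)|\le\frac12$ (see \eqref{phi2}), one computes from \eqref{sto2} that
\[
\E\big[\ft{\zs}_N(t_1,n)\,\cj{\ft{\zs}_N(t_2,n)}\big]
= C_{1-\al,N}^2\,\ind_{|n|\le N}\,|\vp(n)|^2\jb{n}^{2\al}\,(t_1\wedge t_2)\,e^{(t_1-t_2)\vp(n)},
\]
while $\E[\ft{\zs}_N(t_1,n)\,\ft{\zs}_N(t_2,m)]=0$ unless $n=-m$. Here $|\vp(n)|^2\jb{n}^{2\al}\sim\jb{n}^{2\al-2}$ for $\al\ge\frac34$, and the constant $C_{1-\al,N}$ in \eqref{CN} is normalized precisely so that $C_{1-\al,N}^4\sum_{|k|\le N}\jb{k}^{4\al-4}=\frac12$.

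For the uniqueness step, by Lemma~\ref{LEM:BDW} it suffices to show, for any $m\in\N$, any $0\le t_1\le\cdots\le t_m$, and any test functions $\{\psi_j\}_{j=1}^m\subset\D(\T)$, that $\big\{\jb{\Ys_N(t_j),\psi_j}\big\}_{j=1}^m$ converges in law to the centered Gaussian vector with covariance prescribed by \eqref{sto2c}. Since $\Ys_N(t)=-\P_{\ne 0}(\zs_N^2(t))$, for $n\ne0$ the coefficient $\ft{\Ys}_N(t,n)=-\sum_{n=n_1+n_2}\ft{\zs}_N(t,n_1)\ft{\zs}_N(t,n_2)$ lies in the second Wiener chaos, so $\jb{\Ys_N(t),\psi}=I_2(f^t_{N,\psi})$ for a suitable symmetric kernel $f^t_{N,\psi}$. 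The covariance of $\big\{\jb{\Ys_N(t_j),\psi_j}\big\}$ would be computed exactly as in Lemma~\ref{LEM:COV}: Wick's theorem pairs the four Gaussian factors (separating $n_1\ne n_2$ from the diagonal $n_1=n_2=\frac n2$), one Taylor-expands the resulting exponential $e^{(t_1-t_2)(\vp(n_1)+\vp(n_2))}$ and discards it up to a negligible error as in \eqref{con8}, and the leftover sum is treated by the asymptotic analysis of Remark~\ref{REM:CA1}, now with the weight $\jb{n}^{2\al-2}$ in place of $\jb{n}^{-2\al}$ (and with the truncation $\min(|n_1|,|n_2|)\lesssim\log N$ when $\al=\frac34$ and $\min(|n_1|,|n_2|)\lesssim N^{1-\eps}$ when $\al>\frac34$); together with the normalization identity above, this produces precisely the right-hand side of \eqref{sto2c}. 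It then remains to verify the contraction condition~(ii) of the fourth moment theorem, namely $\|f^t_{N,\psi}\otimes_1 f^t_{N,\psi}\|\to0$ as $N\to\infty$, which we would carry out by a frequency case analysis entirely parallel to the proof of \eqref{B2b} (Cases~1--3), using the rapid decay of $\ft\psi$ together with the smallness of $C_{1-\al,N}$. I expect this contraction estimate to be the \emph{main obstacle}: one must check that $C_{1-\al,N}\to0$ fast enough to kill all the relevant frequency sums, and—unlike in Lemma~\ref{LEM:uniq2}—the contraction now also involves an integration in the time variable, arising from the space-time (rather than purely spatial) nature of the noise, so the bookkeeping is somewhat heavier. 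Once the covariance is identified and the contractions are shown to vanish, the implication (ii)$\,\Rightarrow\,$(iii) of Lemma~\ref{LEM:FMT} completes the uniqueness step.

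For tightness, since $\Ys_N(t)$ lies in the second Wiener chaos for every $t$, we would invoke Lemma~\ref{LEM:OOT}. Taking $t_1=t_2=t$ in the covariance formula and bounding $|\vp(n_1)|^2\jb{n_1}^{2\al}\,|\vp(n_2)|^2\jb{n_2}^{2\al}\le\frac12\big(\jb{n_1}^{4\al-4}+\jb{n_2}^{4\al-4}\big)$ (so that, after summing over $n=n_1+n_2$ with $n_2$ determined by $n_1$), one obtains $\E[|\ft{\Ys}_N(t,n)|^2]\les t^2 C_{1-\al,N}^4\sum_{|k|\le N}\jb{k}^{4\al-4}\les t^2$ uniformly in $n\in\Z$; hence $\Ys_N(t)\in W^{s,\infty}(\T)$ for $s<-\frac12$ by Lemma~\ref{LEM:OOT}\,(i). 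A similar estimate, using $|e^{h\vp(n)}-1|\le|h|$ as in the derivation of \eqref{TT1}, gives $\E[|\dl_h\ft{\Ys}_N(t,n)|^2]\les_t|h|$ for $h\in[-1,1]$, where $\dl_h$ is as in \eqref{dl1}, so that $\Ys_N\in C(\R_+;W^{s,\infty}(\T))$ by Lemma~\ref{LEM:OOT}\,(ii) with $\theta=\frac12$, and $\{\Ys_N\}_{N\in\N}$ is tight in $C(\R_+;W^{s,\infty}(\T))$ exactly as in the proof of Proposition~\ref{PROP:tight}. Combining the uniqueness of finite-dimensional marginals with tightness through the Prokhorov theorem, exactly as in the proof of Theorem~\ref{THM:1}, the entire sequence $\{\Ys_N\}_{N\in\N}$ then converges in law in $C(\R_+;W^{s,\infty}(\T))$, $s<-\frac12$, to a centered Gaussian process whose finite-dimensional marginals have covariance \eqref{sto2c}, namely $\upze$; as in the body of the paper, the cases $\al=\frac34$ and $\al>\frac34$ differ only in the ($\log N$ versus power) asymptotics of $C_{1-\al,N}$ and are handled in the same way.
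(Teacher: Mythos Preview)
Your proposal is correct and follows essentially the same approach as the paper: the paper also reduces Proposition~\ref{PROP:4} to a uniqueness step (Lemma~\ref{LEM:5}) proved via the fourth moment theorem over the isonormal Gaussian process on $L^2(\R_+\times\T)$, together with a tightness step (Lemma~\ref{LEM:tight2}) obtained by adapting Proposition~\ref{PROP:tight}, and the covariance computation, Taylor expansion of the exponential, and contraction estimate all proceed exactly as you describe (with $\al$ replaced by $1-\al$ in the frequency weights). Your concern about the extra time integration in the contraction is well founded but turns out to be harmless: since the kernels are supported on $[0,t]$ in the time variable and the integrands have modulus one, the time integrals contribute only bounded powers of $t$, so the contraction bound reduces to the same frequency case analysis as in \eqref{B2b}.
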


We present a proof of Proposition \ref{PROP:4} 
in the next subsection.
We note that,   from \eqref{sto2c}, 
we can write 
the limiting process $\upze$ as 
\begin{align*}
\upze (t)  = \sum_{n\in \Z^*}
\int_0^t \sqrt {2t'} d B_n(t') e_n
\end{align*}

\noi
for each $t \in \R_+$,  
where 
 $\{ B_n \}_{n \in \Z}$ is a family of mutually independent complex-valued
Brownian motions with $\text{Var}(B_n(t)) = t$ conditioned  that $B_{-n} = \cj{B_n}$, $n \in \Z$. 
%
%
%
In particular, for each $t \in \R_+$, 
 $\upze(t)$ is a scalar multiple of a spatial
white noise 
 with variance  $t^2$
(with spatial mean~$0$).

As a corollary to Proposition \ref{PROP:4}, we have the following convergence result
for $\Zs_N$.

\begin{corollary}\label{COR:Z}

Let $\al\ge \frac 34$.
Given any $s <  \frac 12$, 
$\Zs_N$ defined in~\eqref{sto2a}
converges in law to a unique limit  $\Zs$ 
in $C(\R_+; W^{s,\infty}(\T))$
as $N \to \infty$.
Here,  $\Zs$ is the unique solution to the following equation\textup{:}
\begin{align}
\begin{cases}
\dt \Zs-\partial_{txx}\Zs+\dx \Zs
 = \dx \upze  \\
\Zs|_{t=0} =0, 
\end{cases}
\label{XBBM4a}
\end{align}

\noi
where 
 $\upze$ is as in Proposition \ref{PROP:4}.

\end{corollary}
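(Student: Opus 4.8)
The plan is to deduce the convergence of $\Zs_N$ to $\Zs$ from the convergence in law of $\Ys_N$ to $\upze$ established in Proposition \ref{PROP:4}, together with the continuity of the (linear) solution map for the equation \eqref{XBBM4a}. Recall from \eqref{sto2a} that $\Zs_N = -\I(\vp(D)\Ys_N)$, so that $\Zs_N$ is obtained from $\Ys_N$ by applying the bounded-and-smoothing linear operator $F \mapsto -\I(\vp(D) F)$; similarly, $\Zs$ in \eqref{XBBM4a} is, in Duhamel form, $\Zs = -\I(\vp(D)\upze)$. First I would record the mapping property of $\I\circ\vp(D)$: by the definition of $\vp(D)$ in \eqref{phi1}--\eqref{phi2} together with the explicit formula for $\I$ in \eqref{exp2}, for any $s \in \R$ and any finite $T$ one has the estimate $\|\I(\vp(D) F)\|_{C_T W^{s+1,\infty}_x} \les T \|F\|_{C_T W^{s,\infty}_x}$, using $|\vp(n)| \les \jb{n}^{-1}$, $|e^{t\vp(n)}| = 1$, and $|\int_0^t e^{(t-t')\vp(n)}\,dt'| \le t$. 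In particular, the linear map $F\mapsto -\I(\vp(D)F)$ is continuous from $C(\R_+; W^{s,\infty}(\T))$ to $C(\R_+; W^{s+1,\infty}(\T))$ when both are endowed with the compact-open topology in time. Taking $s < -\frac12$ so that $s+1 < \frac12$, this already gives: $\Ys_N \to \upze$ in law in $C(\R_+; W^{s,\infty}(\T))$ implies, by the continuous mapping theorem, $\Zs_N = -\I(\vp(D)\Ys_N) \to -\I(\vp(D)\upze) =: \Zs$ in law in $C(\R_+; W^{s+1,\infty}(\T)) \subset C(\R_+; W^{s,\infty}(\T))$. Since $s<-\frac12$ was arbitrary, we obtain convergence in law in $C(\R_+; W^{\s,\infty}(\T))$ for every $\s < \frac12$.

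Next I would identify the limit $\Zs := -\I(\vp(D)\upze)$ with the unique solution to \eqref{XBBM4a}. By construction $-\I(\vp(D)\upze)$ is precisely the Duhamel formula for the linear equation $\dt\Zs = \vp(D)\Zs - \vp(D)\upze$ with $\Zs|_{t=0}=0$, which is the rewriting of \eqref{XBBM4a} via \eqref{phi1}. Uniqueness of the solution to \eqref{XBBM4a} in $C(\R_+; W^{\s,\infty}(\T))$ is immediate since the equation is linear: the difference of two solutions solves the homogeneous equation with zero data, hence vanishes. One should also note that $\Zs$ indeed belongs almost surely to $C(\R_+; W^{\s,\infty}(\T))$ for $\s < \frac12$; this follows either from the mapping property above applied pathwise to $\upze \in C(\R_+; W^{s,\infty}(\T))$ (with $s<-\frac12$), or directly from Lemma \ref{LEM:OOT} using the covariance formula \eqref{sto2c} — from \eqref{sto2c} one computes $\E[|\ft{\Zs}(t,n)|^2] \les_t \jb{n}^{-2}$ and a corresponding time-increment bound via $|e^{(t+h)\vp(n)} - e^{t\vp(n)}| \le |h|$, exactly as in the proof of Proposition \ref{PROP:tight}, which places $\Zs(t) \in W^{\s,\infty}(\T)$ for $\s<\frac12$ and $\Zs \in C(\R_+; W^{\s,\infty}(\T))$ almost surely.

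Finally, uniqueness of the \emph{limit in law} of $\{\Zs_N\}$ follows from the uniqueness of the limit in law of $\{\Ys_N\}$ in Proposition \ref{PROP:4}: any subsequential limit in law of $\Zs_N$ must, by the continuity of $-\I(\vp(D)\,\cdot\,)$ together with Proposition \ref{PROP:4}, coincide with $\Zs$; combined with tightness of $\{\Zs_N\}$ (which again is inherited from tightness of $\{\Ys_N\}$ through the continuous map, or can be re-derived via Lemma \ref{LEM:OOT} and the Arzelà–Ascoli argument of Proposition \ref{PROP:tight}), this yields convergence of the entire sequence. I do not expect any genuine obstacle here: the entire content of the corollary is already contained in Proposition \ref{PROP:4}, and the only thing to check carefully is the (routine) smoothing/continuity estimate for $\I\circ\vp(D)$ on the $W^{s,\infty}$ scale and the fact that it interacts well with the compact-open topology in time; the mild bookkeeping point is to keep track that the one extra derivative gained promotes $s<-\frac12$ to $s+1<\frac12$, matching the stated regularity.
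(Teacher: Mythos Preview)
Your approach is correct and is essentially what the paper has in mind: the paper states this as an immediate corollary to Proposition~\ref{PROP:4} without a separate proof, and the natural reading is exactly the continuous mapping argument you give, since $\Zs_N = -\I(\vp(D)\Ys_N)$ and $\Zs = -\I(\vp(D)\upze)$.

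One technical correction is in order. Your justification of the smoothing estimate via the pointwise bounds $|\vp(n)|\les\jb{n}^{-1}$, $|e^{t\vp(n)}|=1$, and $\big|\int_0^t e^{(t-t')\vp(n)}\,dt'\big|\le t$ only controls the $\l^\infty$-norm of Fourier coefficients, which does not bound the $L^\infty$-norm of the function. In fact the endpoint map $W^{s,\infty}\to W^{s+1,\infty}$ fails: the relevant symbol is $\jb{n}\vp(n)=-in/\jb{n}$, i.e.\ the Hilbert transform, which is unbounded on $L^\infty(\T)$. The fix is harmless for your purposes. For any $\eps>0$, the symbol $\jb{n}^{1-\eps}\vp(n)=-in/\jb{n}^{1+\eps}$ has an $L^1(\T)$ convolution kernel (near the origin it behaves like $|x|^{\eps-1}$), and $S(\tau)$ is bounded on $L^\infty$ uniformly on compact $\tau$-intervals since $e^{\tau\vp(n)}=1+\tau\vp(n)+O(\tau^2\jb{n}^{-2})$, where the last two pieces have $L^1$ kernels. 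Hence
\[
\|\I(\vp(D)F)\|_{C_TW^{s+1-\eps,\infty}_x}\les_T\|F\|_{C_TW^{s,\infty}_x},
\]
and since the target regularity in the corollary is any $s<\tfrac12$ (strict inequality), this $\eps$-loss is irrelevant: taking $s_1<-\tfrac12$ close to $-\tfrac12$ and $\eps$ small recovers every $s<\tfrac12$. The rest of your argument (identification of $\Zs$ with the Duhamel solution of \eqref{XBBM4a}, uniqueness, regularity) is fine as written.
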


\medskip

As in \eqref{exp3x}, 
consider the second order expansion
for the solution $u_N$ to \eqref{XBBM2}:
\begin{align*}
u_N
&  =  \zs_N +\Zs_N + v_N, 
\end{align*}

\noi
where $\zs_N$ and $\Zs_N$ are as in \eqref{sto2} and \eqref{sto2a}.
Then, the remainder term 
$v_N = u_N - \zs_N -\Zs_N$ satisfies the following equation: 
\begin{align}
\begin{cases}
\dt v_N= 
\vp(D) v_N 
+ \vp(D) 
\big(
(v_N +  \zs_N +\Zs_N )^2 - \zs_N^2\big)\\
v_N|_{t=0} = v_0.
\end{cases}
\label{XBBM4b}
\end{align}

\noi
As in the random initial data case, 
$\zs_N$ converges to $0$ as $N \to \infty$; see Lemma \ref{LEM:sto1}.
Then, together with Corollary \ref{COR:Z}, we
formally obtain the following limiting equation:
\begin{align}
\begin{cases}
\dt   v=  \vp(D) v +  \vp(D) 
\big( (v +  \Zs)^2 \big) \\
 v|_{t=0} =v_0.
 \end{cases}
\label{XBBM4c}
\end{align}

\noi
Then,   by setting 
$u = \Zs + v$, 
we obtain, from \eqref{XBBM4a} and \eqref{XBBM4b}, 
the following limiting equation:
\begin{align}
\begin{cases}
\dt u-\partial_{txx}u+\partial_{x}u
+\partial_{x}(u^{2})  = \dx \upze  \\
u|_{t = 0} = v_0.
\end{cases}
\label{XBBM4}
\end{align}

Let $v_0 \in H^1(\T)$.
Then, by the same proof, we see that an analogue of  Proposition \ref{PROP:2}
holds for  the limiting equation \eqref{XBBM4}.
In particular, \eqref{XBBM4} is almost surely globally well-posed
with the solution $v$ to \eqref{XBBM4c}
in the class
$C(\R_+; H^1(\T))$.

Finally, we state the convergence result.

\begin{theorem}\label{THM:4a}

Let $\al\ge \frac34$
and $v_0 \in H^1(\T)$.
As $N \to \infty$, 
the solution $u_N$
to \eqref{XBBM1}
with the renormalized noise
converges in law
to the solution $u$ to \eqref{XBBM4}
in $C(\R_+; H^{- \frac 14 - \eps}(\T))$.

\end{theorem}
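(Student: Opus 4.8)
The plan is to mirror the proof of Theorem~\ref{THM:3}, with the random-initial-data objects $(z_N, Z_N, Z)$ replaced by the stochastic-forcing objects $(\zs_N, \Zs_N, \Zs)$ and with Proposition~\ref{PROP:4}, Corollary~\ref{COR:Z}, and the analogue of Proposition~\ref{PROP:2} for~\eqref{XBBM4} playing the roles of Theorem~\ref{THM:1} and Proposition~\ref{PROP:2}.

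First I would establish the analogue of Lemma~\ref{LEM:sto}. Arguing as in Lemma~\ref{LEM:sto}\,(i): by the It\^o isometry, $\E\big[|\ft{\zs_N}(t,n)|^2\big] \les_t C_{1-\al, N}^2 |\vp(n)|^2 \jb{n}^{2\al} \ind_{|n|\le N}$, and since $|\vp(n)|^2 \jb{n}^{2\al} \sim \jb{n}^{2\al - 2}$, the bounds in~\eqref{CN} together with $\al \ge \frac 34$ give $\E\big[|\ft{\zs_N}(t,n)|^2\big] \les_t \jb{N}^{-\g}\jb{n}^{-\frac 12 - 2\dl}$ on $|n|\le N$ for some $\g, \dl > 0$; an analogous bound for the time increments follows from the mean value theorem as in~\eqref{TT2}. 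By Lemma~\ref{LEM:OOT}, $\zs_N$ in~\eqref{sto2} then converges to $0$ almost surely in $C(\R_+; W^{s_1,\infty}(\T))$ for every $s_1 < -\frac 14$. Combining this with Corollary~\ref{COR:Z} and \cite[Theorem 3.9]{PB71}, the pair $(\zs_N, \Zs_N)$ converges in law to $(0, \Zs)$ in $C(\R_+; W^{s_1,\infty}(\T)) \times C(\R_+; W^{s_2,\infty}(\T))$ for $s_1 < -\frac 14$ and $s_2 < \frac 12$; since this product space is Polish (Remark~\ref{REM:Sk2}), the Skorokhod representation theorem (Lemma~\ref{LEM:Sk}) furnishes a probability space $(\wt\O, \wt\sF, \wt\PP)$ carrying $(\wt\zs_N, \wt\Zs_N)$ and $\wt\Zs$ with the same laws as $(\zs_N, \Zs_N)$ and $\Zs$, respectively, such that $(\wt\zs_N, \wt\Zs_N) \to (0, \wt\Zs)$ $\wt\PP$-almost surely.

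Next I would run the deterministic stability argument on $(\wt\O, \wt\sF, \wt\PP)$. Since $v_0 \in H^1(\T)$, the analogue of Proposition~\ref{PROP:2} for~\eqref{XBBM4} (proved as in Section~\ref{SEC:limit}, by a contraction in $H^\s$ combined with a Gronwall energy estimate at the $H^1$ level) gives a unique global solution $\wt v \in C(\R_+; H^1(\T))$ to~\eqref{XBBM4c} with $\wt\Zs$ in place of $\Zs$, satisfying a growth bound $\sup_{0\le t\le T}\|\wt v(t)\|_{H^1} \le C_\o(T) < \infty$ on every finite interval. For each fixed $N$, $\wt\zs_N$ and $\wt\Zs_N$ are spatially smooth, so~\eqref{XBBM4b} admits a global (but not uniformly bounded) solution $\wt v_N \in C(\R_+; H^{\frac 34 - \eps}(\T))$. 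Writing both equations in Duhamel form and expanding $(\wt v_N + \wt\zs_N + \wt\Zs_N)^2 - \wt\zs_N^2 = \wt v_N^2 + \wt\Zs_N^2 + 2\wt v_N \wt\zs_N + 2\wt v_N \wt\Zs_N + 2\wt\zs_N \wt\Zs_N$, I would bound $\wt v_N - \wt v$ in $C_I H^{\frac 34 - \eps}_x$ by the same chain of estimates as~\eqref{XY5}--\eqref{XY16}: the smoothing of $\vp(D)$ with Lemma~\ref{LEM:BT} controls the terms quadratic in $\wt v$ and in $\wt\Zs$, Lemma~\ref{LEM:BOZ} controls the products of $\wt\zs_N$ (which converges to $0$ in $W^{s_1,\infty}$, $s_1 < -\frac 14$) with $\wt v_N$ or $\wt\Zs_N$, and the bound $C_\o(T)$ allows the local-in-time contraction to be iterated over $[0,T]$. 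Since $\wt v_N|_{t=0} = \wt v|_{t=0} = v_0$, the difference vanishes at $t=0$ and the bootstrap continuity argument applies verbatim, yielding $\wt v_N \to \wt v$ $\wt\PP$-almost surely in $C(\R_+; H^{\frac 34 - \eps}(\T))$.

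Finally, setting $\wt u_N = \wt\zs_N + \wt\Zs_N + \wt v_N$ and $\wt u = \wt\Zs + \wt v$, the two previous steps give $\wt u_N \to \wt u$ $\wt\PP$-almost surely in $C(\R_+; H^{-\frac 14 - \eps}(\T))$, the slowest term being $\wt\zs_N$; since $\wt u_N$ and $\wt u$ solve~\eqref{XBBM1} and~\eqref{XBBM4} in law, uniqueness of solutions gives $\Law(\wt u_N) = \Law(u_N)$ and $\Law(\wt u) = \Law(u)$, and hence $u_N$ converges in law to $u$ in $C(\R_+; H^{-\frac 14 - \eps}(\T))$. The only genuinely new ingredient compared with Theorem~\ref{THM:3} is Proposition~\ref{PROP:4} — the convergence in law of the second-order object $\Ys_N = -\P_{\ne 0}(\zs_N^2)$ to the Gaussian process $\upze$, obtained via the fourth moment theorem (Lemma~\ref{LEM:FMT}) together with a tightness estimate along the lines of Section~\ref{SEC:Z} — and this, rather than anything in the argument above, is the main obstacle; granting Proposition~\ref{PROP:4} (hence Corollary~\ref{COR:Z}), the remainder is a routine adaptation of Section~\ref{SEC:conv}.
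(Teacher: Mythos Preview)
Your proposal is correct and follows exactly the approach the paper takes: the paper states that Theorem~\ref{THM:4a} follows from a straightforward modification of the proof of Theorem~\ref{THM:3} using Corollary~\ref{COR:Z} and Lemma~\ref{LEM:sto1}, and this is precisely the Skorokhod-plus-stability argument you outline.

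One small inaccuracy in your sketch of the $\zs_N \to 0$ step: the bound you claim, $\E\big[|\ft{\zs_N}(t,n)|^2\big] \les_t \jb{N}^{-\g}\jb{n}^{-\frac 12 - 2\dl}$ with $\g,\dl>0$, does not hold as written. Since $C_{1-\al,N}^2\jb{n}^{2\al-2}\sim \jb{N}^{\frac 32 - 2\al}\jb{n}^{2\al-2}$ on $|n|\le N$, trading $|n|\le N$ for $N$-decay costs regularity, so for $\al>\frac 34$ you get $\jb{N}^{-\g}\jb{n}^{-\frac 12 + 2\dl}$ (sign flipped on $\dl$), which still suffices for $W^{s_1,\infty}$ with $s_1<-\frac 14$; at the endpoint $\al=\frac 34$, however, $C_{\frac14,N}^2\sim(\log N)^{-\frac 12}$ and no polynomial $N^{-\g}$ is available, so one must instead use the scaling trick from the proof of Lemma~\ref{LEM:sto}\,(i) (set $y_N = C_{\frac14,N}^{-1}\zs_N$, show $\{y_N\}$ is bounded via Lemma~\ref{LEM:OOT}, and conclude $\zs_N = C_{\frac14,N}\,y_N \to 0$). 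This is exactly what the paper's Lemma~\ref{LEM:sto1} does by referring back to that argument.
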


Theorem \ref{THM:4a} follows
from 
a straightforward modification of the proof of Theorem~\ref{THM:3}
presented in Section \ref{SEC:conv}
with Corollary \ref{COR:Z}
and the convergence property of the renormalized stochastic convolution
$\zs_N$ in \eqref{sto2} (see Lemma \ref{LEM:sto1} below), 
and thus we omit details.
See also 
Remark \ref{REM:IV}.

\medskip

Lastly, we state an analogue of Theorem \ref{THM:3a}
in the current context.
More precisely, 
for $\al \ge \frac 34$, consider the following 
 weakly interacting stochastic  BBM:
\begin{align}
\begin{cases}
\dt \uu_N-\partial_{txx}\uu_N+\partial_{x}\uu_N
+C_{1- \al, N}^2 \partial_{x}(\uu_N^{2})  = 
\P_N \jb{\dx}^\al  \dx \xi\\
\uu_N|_{t=0} = v_0.
\end{cases}
\label{XBBM8}
\end{align}

\noi
Then, by considering the first order expansion 
and formally taking a limit as $N \to \infty$, 
we obtain the following limiting equation:
\begin{align}
\begin{cases}
\dt \uu-\partial_{txx}\uu+ \partial_{x}\uu
  = \jb{\dx}^\al  \dx  \xi + \dx \upze  \\
\uu|_{t = 0} = v_0, 
\end{cases}
\label{XBBM9}
\end{align}

\noi
where 
$\upze$ is as in Proposition \ref{PROP:4}, 
assumed to be 
 independent of the space-time white noise~$\xi$.

\begin{theorem}\label{THM:6}

Let $\al\ge \frac34$
and $v_0 \in H^{\frac 12 - \eps} (\T)$.
As $N \to \infty$, 
the solution $\uu_N$
to~\eqref{XBBM8}
with the renormalized nonlinearity
converges in law
to the solution $\uu$ to \eqref{XBBM9}
in $C(\R_+; H^{\frac 12 -  \al  - \eps}(\T))$.

\end{theorem}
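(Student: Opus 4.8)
The plan is to follow the template already used for Theorem~\ref{THM:3a} (the weakly interacting regime, in which the limiting equation degenerates to a \emph{linear} equation with two independent sources of randomness), now fed by the ingredients developed above for the rough space-time noise: Proposition~\ref{PROP:4}, Corollary~\ref{COR:Z}, and the convergence of the renormalized stochastic convolution $\zs_N$ from Lemma~\ref{LEM:sto1}. Concretely, I would set $\mathsf{w}_N := C_{1-\al, N}^{-1}\zs_N$, the (un-renormalized, $\P_N$-truncated) stochastic convolution of the forcing $\P_N\jb{\dx}^\al\dx\xi$, and expand the solution of \eqref{XBBM8} (rewritten in $\vp(D)$-form) as $\uu_N = S(t)v_0 + \mathsf{w}_N + \Zs_N + v_N$ with $\Zs_N$ as in \eqref{sto2a}. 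Using $\zs_N = C_{1-\al, N}\mathsf{w}_N$ — so that the only non-vanishing quadratic stochastic contribution $C_{1-\al, N}^2\mathsf{w}_N^2 = \zs_N^2$ is precisely what defines $\Zs_N$ — one finds that $v_N$ solves $\dt v_N = \vp(D)v_N + \vp(D)\mathcal{R}_N$ with $v_N|_{t=0}=0$, where $\mathcal{R}_N$ is a finite sum of products of factors drawn from $\{S(t)v_0,\, \mathsf{w}_N,\, \Zs_N,\, v_N\}$, each carrying a scalar $C_{1-\al, N}^2$, with every occurrence of $\mathsf{w}_N$ absorbing one power of $C_{1-\al, N}$ into $\zs_N = C_{1-\al,N}\mathsf{w}_N$; in particular every summand of $\mathcal{R}_N$ carries a coefficient tending to $0$. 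On the other side, since \eqref{XBBM9} is linear, its unique global solution is simply $\uu = S(t)v_0 + \mathsf{w}_\infty + \Zs$, where $\mathsf{w}_\infty \in C(\R_+; W^{\frac12-\al-\eps,\infty}(\T))$ is the stochastic convolution of $\jb{\dx}^\al\dx\xi$ and $\Zs$ is the Gaussian process of Corollary~\ref{COR:Z}; correspondingly, the limiting profile of $v_N$ is $0$.

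The probabilistic core is a joint convergence statement together with the correct independence structure — the analogue of Remark~\ref{REM:indep} across Wiener chaoses of different orders. I would first record that $\mathsf{w}_N$ is itself Gaussian and, by a routine computation with Lemma~\ref{LEM:OOT} ($\E[|\ft{\mathsf{w}}_N(t,n)|^2]\sim t\jb{n}^{2\al-2}$ on $|n|\le N$), converges to $\mathsf{w}_\infty$ in $C(\R_+; W^{\frac12-\al-\eps,\infty}(\T))$; together with Corollary~\ref{COR:Z} and Lemma~\ref{LEM:sto1} this gives the marginal limits $\mathsf{w}_N\to\mathsf{w}_\infty$, $\Zs_N\to\Zs$, $\zs_N\to 0$. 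For the joint law, I would apply the fourth moment theorem (Lemma~\ref{LEM:FMT}) to vectors that mix first-chaos entries $\jb{\mathsf{w}_N(t_i),\psi_i}\in\H_1$ (for which the contraction condition of Lemma~\ref{LEM:FMT}\,(ii) is vacuous) with second-chaos entries $\jb{\Ys_N(s_j),\phi_j}\in\H_2$ (for which the required contraction estimate is exactly the one established in the proof of Proposition~\ref{PROP:4}). Since $\H_1\perp\H_2$, we have $\E[\jb{\mathsf{w}_N(t),\psi}\jb{\Ys_N(s),\phi}]=0$ for every $N$, so the limiting covariance matrix is block-diagonal; combined with the joint Gaussianity of the limit, this forces $\mathsf{w}_\infty$ (hence the white noise $\xi$ behind it) to be independent of $\upze$, as required in \eqref{XBBM9}. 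Adding tightness of $\{(\mathsf{w}_N,\Zs_N)\}_N$ (Lemma~\ref{LEM:OOT} and the compactness argument of Proposition~\ref{PROP:tight}), I would conclude that $(\mathsf{w}_N,\Zs_N)\to(\mathsf{w}_\infty,\Zs)$ in law in $C(\R_+; W^{\frac12-\al-\eps,\infty}(\T))\times C(\R_+; W^{\frac12-\eps,\infty}(\T))$ with $\xi\perp\upze$.

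From there the argument is deterministic. I would pass to a new probability space via the Skorokhod representation theorem (Lemma~\ref{LEM:Sk}) to obtain a.s.\ convergent representatives $(\wt{\mathsf{w}}_N,\wt\Zs_N)\to(\wt{\mathsf{w}}_\infty,\wt\Zs)$ (and $\wt\zs_N\to 0$), and then run the short-interval continuity/iteration scheme from Section~\ref{SEC:conv}. The terms of $\mathcal{R}_N$ are estimated exactly as there, via Lemma~\ref{LEM:BT} and Lemma~\ref{LEM:BOZ}: the rough factor $\zs_N$, of regularity just below $-\frac14$, is always paired with a factor of regularity above $\frac14$ (the smoothed pieces $\Zs_N\in W^{\frac12-\eps,\infty}$, $S(t)v_0\in H^{\frac12-\eps}$ since $v_0\in H^{\frac12-\eps}(\T)$, or $v_N$ itself), and every term carries a vanishing scalar. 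This yields $\wt v_N\to 0$ in $C(\R_+; H^{\frac34-\eps}(\T))$ globally in time — the globalization being in fact easier than in the proof of Theorem~\ref{THM:3}, since the limiting equation is linear and one does not need Gronwall control on a (possibly $N$-dependent) growing $H^1$-norm. Therefore $\wt\uu_N = S(t)v_0 + \wt{\mathsf{w}}_N + \wt\Zs_N + \wt v_N \to S(t)v_0 + \wt{\mathsf{w}}_\infty + \wt\Zs = \wt\uu$ a.s.\ in $C(\R_+; H^{\frac12-\al-\eps}(\T))$ (the roughest component being $\mathsf{w}_\infty$), which, by uniqueness of solutions, upgrades to the claimed convergence in law of $\uu_N$ to $\uu$.

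The hard part will be the joint-convergence-and-independence step: one must check that Lemma~\ref{LEM:FMT} genuinely applies to vectors of mixed chaos order (the contraction hypothesis being vacuous on the order-one coordinates and supplied by Proposition~\ref{PROP:4} on the order-two coordinates, and the candidate limiting covariance matrix being positive semi-definite) and that block-diagonality of this limiting covariance actually upgrades to independence of $\mathsf{w}_\infty$ and $\upze$ — i.e.\ that the heuristic ``$\H_1\perp\H_2\Rightarrow$ independence in the limit'' is legitimate here, exactly as in Remark~\ref{REM:indep}. Everything else — the expansion, the product estimates for $\mathcal{R}_N$, the tightness, and the short-interval iteration — is a routine transcription of Sections~\ref{SEC:Z}--\ref{SEC:conv}, so details may be omitted as in the proofs of Theorems~\ref{THM:3a} and~\ref{THM:4a}.
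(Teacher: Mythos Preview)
Your proposal is correct and follows essentially the same approach the paper indicates: the paper defines $\Psi$ (your $\mathsf{w}_\infty$), writes the limiting solution as $\uu=\Psi+S(t)v_0+\Zs$, and then simply refers to a straightforward modification of the proofs of Theorems~\ref{THM:3}, \ref{THM:3a}, and~\ref{THM:4a} together with Remark~\ref{REM:indep}, omitting all details. You have correctly unpacked what that modification entails---the second-order expansion $\uu_N=S(t)v_0+\mathsf{w}_N+\Zs_N+v_N$, the joint convergence of $(\mathsf{w}_N,\Zs_N)$ via Lemma~\ref{LEM:FMT} with the $\H_1\perp\H_2$ block-diagonal covariance yielding independence of $\xi$ and $\upze$, the Skorokhod step, and the observation that the remainder $v_N\to 0$ since every term in $\mathcal R_N$ carries a vanishing scalar---so your write-up is a faithful and complete expansion of the paper's sketch.
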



Define  $\Psi$ by 
\begin{align*}
\Psi(t) = -   \int_0^t 
S(t - t')
 \vp(D) \jb{\dx}^\al  \xi (dt').
\end{align*}

\noi
It is easy to see that $\Psi $ almost surely belongs
to  $C(\R_+; W^{\frac 12 - \al  - \eps, \infty}(\T))$.
Then, by writing
 the solution $\uu$ to \eqref{XBBM9}
in the first order expansion: $\uu = \Psi + \vv$, 
we see that $\vv$ satisfies
\begin{align*}
\vv(t) = S(t) v_0 + \Zs, 
\end{align*}

\noi
where $\Zs$ is as in Corollary \ref{COR:Z}, 
almost surely belonging
to $C(\R_+; W^{\frac 12- \eps,\infty}(\T))$.
This gives the regularity $\frac 12 - \eps$
for the initial data $v_0$ in Theorem \ref{THM:6}.

With this observation, 
in view of Corollary \ref{COR:Z}, 
Theorem \ref{THM:6}
follows from 
a straightforward modification of the proof of Theorems~\ref{THM:3}, 
\ref{THM:3a}, and \ref{THM:4a} 
(see also Remark \ref{REM:indep})
and thus we omit details.

\subsection{Convergence properties of the stochastic terms}
\label{SUBSEC:A2}

In this subsection, we establish convergence properties
of the stochastic terms $\zs_N$ in \eqref{sto2} and $\Ys_N$
in \eqref{sto2a}.

Let us first give a precise meaning of the stochastic convolution $\zs_N$ in~\eqref{sto2}.
 Let~$W$ denote a cylindrical Wiener process on $L^2(\T)$:
\begin{align}
\mathcal W(t)
 = \sum_{n \in \Z} B_n (t) e_n, 
\label{BM1}
\end{align}

\noi
where 
$\{ B_n \}_{n \in \Z}$ 
is defined by 
$B_n(t) = \jb{\xi, \ind_{[0, t]} \cdot e_n}_{t, x}$.
Here, $\jb{\,\cdot, \cdot\,}_{t, x}$ denotes 
the duality pairing on $ \R_+\times \T$.
As a result, 
we see that $\{ B_n \}_{n \in \Z}$ is a family of mutually independent complex-valued
Brownian motions conditioned  that $B_{-n} = \cj{B_n}$, $n \in \Z$. 
Note that we have
 \[\text{Var}(B_n(t)) = \E\big[
 \jb{\xi, \ind_{[0, t]} \cdot e_n}_{t, x}\cj{\jb{\xi, \ind_{[0, t]} \cdot e_n}_{ t, x}}
 \big] = \|\ind_{[0, t]} \cdot e_n\|_{L^2_{t, x}}^2 = t\]

\noi
 for any $n \in \Z$.
Then, we can write the stochastic convolution $\zs_N$ in \eqref{sto2}
as 
\begin{align}
\begin{split}
\zs_N(t) 
& = -  C_{1-\al, N}  \int_0^t 
S(t - t')
 \P_N \vp(D) \jb{\dx}^\al  d\mathcal W(t')\\
& = 
-  C_{1-\al, N}  
\sum_{ |n|\le N}
e^{t\vp(n)} \If_n(t) e_n, 
\end{split}
\label{sto3}
\end{align}

\noi
where $\If_n(t)$, $ n \in \Z$,  is the Wiener integral given by 
\begin{align}
\If_n(t) =  \int_0^t e^{- t'\vp(n)} \vp(n)\jb{n}^\al dB_n(t').
\label{sto4}
\end{align}

\noi
Recalling that $\vp(n)|_{ n = 0} = 0$, 
we have $\If_0 \equiv 0$.
Moreover, we have $\If_{-n} = \cj{\If_n}$
for any $n \in \Z^*$.

\begin{lemma}\label{LEM:sto1}
Let $\al \ge \frac 34$.
Then, 
given any  $s <-\frac{1}{4}$, 
the renormalized stochastic convolution $\zs_N$ defined in~\eqref{sto2}
 converges to $0$ in $C(\R_+; W^{s,\infty}(\T) )$ almost surely,
as $N \to \infty$.

\end{lemma}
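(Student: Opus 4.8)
The plan is to mimic the proof of Lemma~\ref{LEM:sto}\,(i), working with the explicit Wiener-integral representation \eqref{sto3}--\eqref{sto4} of $\zs_N$. First I would record that, for each $t \in \R_+$, the coefficient $\ft{\zs_N}(t,n) = -\,C_{1-\al,N}\,e^{t\vp(n)}\If_n(t)\,\ind_{|n|\le N}$ belongs to the first Wiener chaos $\H_1$ and that $\zs_N$ is spatially stationary. Since $\vp(n)$ is purely imaginary (so that $|e^{-t'\vp(n)}| = 1$) and $\text{Var}(B_n(t)) = t$, the It\^o isometry applied to \eqref{sto4} yields
\begin{align*}
\E\big[|\ft{\zs_N}(t,n)|^2\big]
= C_{1-\al,N}^2\,|\vp(n)|^2\jb{n}^{2\al}\,t\,\ind_{|n|\le N}
\les_t C_{1-\al,N}^2\,\jb{n}^{2\al-2}\,\ind_{|n|\le N},
\end{align*}
using $|\vp(n)|^2\jb{n}^{2\al}\les\jb{n}^{2\al-2}$ from \eqref{phi2}. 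Splitting $\dl_h\ft{\zs_N}(t,n)$ into the contribution of $e^{(t+h)\vp(n)} - e^{t\vp(n)}$ (controlled via \eqref{TT2}) and that of the increment $\If_n(t+h) - \If_n(t)$ (an It\^o integral over $[t,t+h]$), one obtains, for $|h|\le 1$ and $t$ in a fixed compact interval,
\begin{align*}
\E\big[|\dl_h\ft{\zs_N}(t,n)|^2\big] \les_t C_{1-\al,N}^2\,\jb{n}^{2\al-2}\,\ind_{|n|\le N}\,|h|,
\end{align*}
with $\dl_h$ as in \eqref{dl1}.

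When $\al > \tfrac34$, I would exploit the cancellation between the vanishing renormalization constant and the frequency cutoff: since $1-\al < \tfrac14$, \eqref{CN} gives $C_{1-\al,N}^2\sim\jb{N}^{3/2-2\al}$, while for $|n|\le N$ and any $s_0 < -\tfrac14$ one has $\jb{n}^{2\al-2}\ind_{|n|\le N}\le\jb{N}^{2\al-1+2s_0}\jb{n}^{-1-2s_0}$ if $2\al-1+2s_0\ge0$ and $\le\jb{n}^{-1-2s_0}$ otherwise. In either case this produces a genuine polynomial gain: for every $s_0 < -\tfrac14$,
\begin{align*}
\E\big[|\ft{\zs_N}(t,n)|^2\big]\les_t N^{-\g}\jb{n}^{-1-2s_0},
\qquad
\E\big[|\dl_h\ft{\zs_N}(t,n)|^2\big]\les_t N^{-\g}\jb{n}^{-1-2s_0}\,|h|,
\end{align*}
with $\g = \min\big(2\al-\tfrac32,\,-\tfrac12-2s_0\big) > 0$. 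Given $s < -\tfrac14$, choosing $s_0\in(s,-\tfrac14)$ and invoking Lemma~\ref{LEM:OOT}\,(ii) with $X = 0$, $k = 1$, and $\ta = \tfrac12$, I conclude that $\zs_N\to 0$ almost surely in $C([0,T];W^{s,\infty}(\T))$ for each $T > 0$, hence in $C(\R_+;W^{s,\infty}(\T))$ with the compact-open topology in time.

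The borderline case $\al = \tfrac34$ is the delicate one, since \eqref{CN} then gives only $C_{1/4,N}\sim(\log\jb{N})^{-1/4}$, so the argument above degenerates to a logarithmic, rather than polynomial, gain in $N$ --- too weak for a Borel--Cantelli argument along the full sequence. Here I would instead rescale, exactly as in the $\al = \tfrac14$ case of Lemma~\ref{LEM:sto}\,(i): set $w_N := C_{1/4,N}^{-1}\zs_N$, so that $\ft{w_N}(t,n) = -\,e^{t\vp(n)}\If_n(t)\,\ind_{|n|\le N}$, and let $w$ be the process obtained by dropping the cutoff, $\ft w(t,n) = -\,e^{t\vp(n)}\If_n(t)$; since $|\vp(n)|^2\jb{n}^{2\al}\sim\jb{n}^{-1/2}$ when $\al=\tfrac34$, Lemma~\ref{LEM:OOT} shows $w\in C(\R_+;W^{s,\infty}(\T))$ almost surely for $s < -\tfrac14$. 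The key point is that the error is now a pure high-frequency tail, $\ft{w_N}(t,n) - \ft w(t,n) = -\,e^{t\vp(n)}\If_n(t)\,\ind_{|n|>N}$, so that
\begin{align*}
\E\big[|\ft{w_N}(t,n) - \ft w(t,n)|^2\big] \les_t \jb{n}^{-1/2}\,\ind_{|n|>N} \le N^{-\g}\jb{n}^{-1-2s_0}
\end{align*}
with $s_0 = -\tfrac14 - \tfrac\g2 < -\tfrac14$ for any small $\g > 0$ (and likewise for the time increments). Lemma~\ref{LEM:OOT}\,(ii) then gives $w_N\to w$ almost surely in $C(\R_+;W^{s,\infty}(\T))$ for all $s < -\tfrac14$, and since $C_{1/4,N}\to 0$ while $\{w_N\}$ converges (hence is almost surely bounded in $C([0,T];W^{s,\infty}(\T))$ for each $T$), we obtain $\zs_N = C_{1/4,N}w_N\to 0$ almost surely in $C(\R_+;W^{s,\infty}(\T))$.

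I expect the main obstacle to be precisely this borderline exponent $\al = \tfrac34$: the renormalization constant is then only logarithmically small, so no almost sure bound for $\zs_N$ follows directly from its second moments; the fix is the observation that dividing by $C_{1/4,N}$ converts the error into a high-frequency tail, which genuinely decays polynomially in $N$ and is therefore amenable to Lemma~\ref{LEM:OOT}. The remaining points are routine bookkeeping: that the time-increment bounds, which come with a factor $|h|$ rather than $|h|^{2\ta}$, still satisfy the hypotheses of Lemma~\ref{LEM:OOT}\,(ii) (they do, with $\ta = \tfrac12$), and that the harmless dependence of constants on the time horizon $T$ causes no issue on compact time intervals.
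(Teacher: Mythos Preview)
Your proposal is correct and follows the same route the paper points to: the paper's proof merely records that $\If_n(t)$ is Gaussian with variance $t|\vp(n)|^2\jb{n}^{2\al}$ and then defers to the argument of Lemma~\ref{LEM:sto}\,(i) together with Lemma~\ref{LEM:OOT}, which is exactly what you have spelled out in detail. The only cosmetic difference is that for $\al>\tfrac34$ you apply Lemma~\ref{LEM:OOT}\,(ii) directly with limit $X=0$, whereas the template in Lemma~\ref{LEM:sto}\,(i) phrases the same polynomial gain via the rescaled process $y_N=N^\dl z_N$; your treatment of the borderline case $\al=\tfrac34$ (rescaling by $C_{1/4,N}^{-1}$ to expose a high-frequency tail) is precisely the analogue of the $\al=\tfrac14$ case there.
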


\begin{proof}

Note that $\If_n(t)$ is a mean-zero complex-valued Gaussian random variable
with variance $t |\vp(n)|^2 \jb{n}^{2\al}$.
Then, the claim follows from arguing as in the proof of
Lemma \ref{LEM:sto}\,(i)
with 
Lemma \ref{LEM:OOT}.
We omit details.
\end{proof}

We now turn to the convergence property of $\Ys_N$ in \eqref{sto2a}.
As in Section \ref{SEC:Z}, 
Proposition~\ref{PROP:4}
follows once we establish the following two lemmas
together with 
 \cite[Definition I.3.5]{RY} which guarantees that
$\upze$ is a centered Gaussian process.

\begin{lemma}[uniqueness]\label{LEM:5}
Let $\al\ge \frac 34$.
Let $\Ys_N$ and $\upze$ be as in \eqref{sto2a} and 
Proposition~\ref{PROP:4}, respectively.
Given $m \in \N$, let 
 $0\le t_1\le  \cdots \le t_m$.
Then, 
$\big\{\Ys_N(t_j)\big\}_{j = 1}^m$
converges in law
to a centered Gaussian vector
$\big\{\upze(t_j)\big\}_{j = 1}^m$
in $(\D'(\T))^{\otimes m}$
as $N \to \infty$.

\end{lemma}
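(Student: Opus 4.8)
The plan is to adapt the proof of Lemma~\ref{LEM:uniq2}, the only structural difference being that the relevant isonormal Gaussian process is now the one over $L^2(\R_+\times\T)$ generated by the space-time white noise $\xi$ (equivalently, by the Brownian family $\{B_n\}$ in \eqref{BM1}), so that the Wiener integrals $\If_n(t)$ in \eqref{sto4} are first-chaos elements. By Lemma~\ref{LEM:BDW}, it suffices to show that for every choice of test functions $\{\psi_j\}_{j=1}^m\subset\D(\T)$ the $\R^m$-valued random vector $\big\{\jb{\Ys_N(t_j),\psi_j}\big\}_{j=1}^m$ converges in law to a Gaussian vector. From \eqref{sto2a}, \eqref{sto3}, and \eqref{sto4} (recall $\If_0\equiv0$), together with $\Ys_N=-\P_{\ne0}(\zs_N^2)$, we have
\[
\jb{\Ys_N(t),\psi}
= - C_{1-\al,N}^2 \sum_{n\in\Z^*}\cj{\ft\psi(n)}
\sum_{\substack{n=n_1+n_2\\ |n_1|,|n_2|\le N}}
e^{t(\vp(n_1)+\vp(n_2))}\,\If_{n_1}(t)\,\If_{n_2}(t).
\]
Since each summand is a product $\If_{n_1}(t)\If_{n_2}(t)$ with $n_1+n_2\ne0$, it is centered and belongs to the second homogeneous Wiener chaos $\H_2$; hence $\jb{\Ys_N(t),\psi}=I_2(\fs^t_{N,\psi})$ for a symmetric kernel $\fs^t_{N,\psi}\in L^2_\textup{sym}((\R_+\times\T)^2)$, and we apply the fourth moment theorem (Lemma~\ref{LEM:FMT}) to $F_N=\big(I_2(\fs^{t_1}_{N,\psi_1}),\dots,I_2(\fs^{t_m}_{N,\psi_m})\big)$.

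For the covariance, the It\^o isometry for complex Brownian motion gives $\E[\If_n(r)\cj{\If_n(t)}]=(r\wedge t)\,|\vp(n)|^2\jb{n}^{2\al}$ and $\E[\If_n(r)\If_n(t)]=0$; combined with Wick's theorem---the fully contracted pairing vanishing because $n_1+n_2\ne0$, exactly as in the proof of Lemma~\ref{LEM:COV}---one obtains, for $r\le t$,
\[
\E\big[\jb{\Ys_N(r),\psi_1}\jb{\Ys_N(t),\psi_2}\big]
= 2\,(r\wedge t)^2\, C_{1-\al,N}^4
\sum_{n\in\Z^*}\cj{\ft\psi_1(n)}\ft\psi_2(n)
\sum_{\substack{n=n_1+n_2\\|n_1|,|n_2|\le N}}
\frac{e^{(r-t)(\vp(n_1)+\vp(n_2))}\,n_1^2 n_2^2}{\jb{n_1}^{4-2\al}\jb{n_2}^{4-2\al}}.
\]
Arguing as in Remark~\ref{REM:CA1} and Case~2 of the proof of Lemma~\ref{LEM:COV}, the contribution of $\min(|n_1|,|n_2|)\lesssim N^{1-\eps}$ vanishes in the limit; on the remaining range $|n_1|,|n_2|\gg N^{1-\eps}$ the exponential factor tends to $1$ (by its Taylor expansion, using $|\vp(n)|\le\frac12$ and $\vp(n)\to0$), $n_j^2\jb{n_j}^{-2}\to1$, and $\jb{n_2}^{2\al-2}/\jb{n_1}^{2\al-2}\to1$, so that the inner double sum is asymptotic to $\sum_{|n_1|\le N}\jb{n_1}^{4\al-4}$; since $2C_{1-\al,N}^4\sum_{|n_1|\le N}\jb{n_1}^{4\al-4}=1$ by \eqref{CN} (with $\al$ replaced by $1-\al$), this gives
\[
\lim_{N\to\infty}\E\big[\jb{\Ys_N(r),\psi_1}\jb{\Ys_N(t),\psi_2}\big]
= (r\wedge t)^2\sum_{n\in\Z^*}\cj{\ft\psi_1(n)}\ft\psi_2(n),
\]
which is precisely \eqref{sto2c}; the limiting matrix $\Cs=(\Cs_{ij})$ is then positive semi-definite, being a limit of covariance matrices.

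It remains to verify condition (ii) of Lemma~\ref{LEM:FMT}, i.e.\ $\|\fs^t_{N,\psi}\otimes_1\fs^t_{N,\psi}\|_{L^2((\R_+\times\T)^2)}\to0$ as $N\to\infty$ (the only relevant contraction in $\H_2$), which is the analogue of \eqref{B2b}. Bounding the (bounded) time integrals defining $\If_{n_j}$ trivially by $|\vp(n_j)|\jb{n_j}^\al t$ and applying Plancherel, one reduces to a sum of the same form as \eqref{B4}, and the same three-case frequency decomposition as in the proof of Lemma~\ref{LEM:uniq2} applies: when $\max(|n_1+m|,|n_2+m|)\gtrsim N^\ta$ the rapid decay of $\ft\psi$ beats $C_{1-\al,N}^{-8}$; when $\max(|n_1+m|,|n_2+m|)\ll N^\ta$ and $\min(|n_1|,|n_2|,|m|)\lesssim N^\ta$ the sum is lower-dimensional and is killed by a small power of $C_{1-\al,N}$; and when $\max(|n_1+m|,|n_2+m|)\ll N^\ta$ and $\min(|n_1|,|n_2|,|m|)\gg N^\ta$ one is forced to have $|n_1|\sim|n_2|\sim|m|$, with only $O(N^\ta)$ choices of $(n_2,m)$ per $n_1$. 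Choosing $\ta=\ta(\al)>0$ sufficiently small in each case gives the bound. With the covariance limit and this contraction estimate, Lemma~\ref{LEM:FMT} shows that $\big\{\jb{\Ys_N(t_j),\psi_j}\big\}_{j=1}^m$ converges in law to the centered Gaussian vector $\NN(0,\Cs)$, and hence, by Lemma~\ref{LEM:BDW}, $\big\{\Ys_N(t_j)\big\}_{j=1}^m$ converges in law to $\big\{\upze(t_j)\big\}_{j=1}^m$ in $(\D'(\T))^{\otimes m}$.

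The main obstacle is the bookkeeping in the covariance and contraction estimates: compared with Lemma~\ref{LEM:uniq2}, the Wiener integrals $\If_n$ introduce an extra time-integration layer, and the frequency weights are now $|\vp(n)|^2\jb{n}^{2\al}\sim\jb{n}^{2\al-2}$ rather than $\jb{n}^{-2\al}$, so one has to recheck that the dominant contributions still localize at frequencies of size $\sim N$ and that the normalization $C_{1-\al,N}$ produces exactly the constant $1$ appearing in \eqref{sto2c}. Once these points are settled, every remaining step is a routine variant of the estimates in Section~\ref{SEC:Z}.
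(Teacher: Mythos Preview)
Your proposal is correct and follows essentially the same approach as the paper: reduce via Lemma~\ref{LEM:BDW}, identify $\jb{\Ys_N(t),\psi}$ as a second-chaos element with respect to the space-time isonormal process generated by $\{B_n\}$, compute the covariance limit via Wick's theorem and the It\^o isometry (the paper packages this as \eqref{Ycon4}--\eqref{Ycon6b}), and verify the single contraction condition of Lemma~\ref{LEM:FMT} by observing that $|\vp(n_j)|\jb{n_j}^\al\lesssim\jb{n_j}^{\al-1}$ reduces the estimate to \eqref{B2b} with $\al$ replaced by $1-\al$. The paper is slightly more explicit in writing out the kernel $\fns^t$ and the isonormal process \eqref{BM2}, but the argument is the same.
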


\begin{lemma}[tightness]\label{LEM:tight2}
Let $\al\ge \frac 34$
and $\Ys_N$ be as in \eqref{sto2a}.
Then, given any  $s < - \frac 12$, 
the sequence $\{\Ys_N\}_{N \in \N}$ is tight in 
 $C(\R_+;W^{s,\infty}(\T))$.

\end{lemma}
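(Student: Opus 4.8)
The plan is to establish tightness of $\{\Ys_N\}_{N \in \N}$ in $C(\R_+; W^{s,\infty}(\T))$ for $s < -\frac12$ by following the same scheme as in the proof of Proposition~\ref{PROP:tight}: verify suitable second-moment bounds on the Fourier coefficients $\ft{\Ys_N}(t, n)$ and their time-increments, deduce uniform-in-$N$ bounds in a space $\C^\be_T W^{s_1,\infty}_x$ via Lemma~\ref{LEM:OOT}, and then exhibit compact sets $K_\dl$ (as in \eqref{TT4}) using the compact embedding $\C^\be([0,T]; W^{s_1,\infty}(\T)) \subset C([0,T]; W^{s,\infty}(\T))$ together with a diagonal-subsequence argument over the exhausting intervals $T_j = 2^j$.

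Concretely, first I would compute $\ft{\Ys_N}(t,n) = -\sum_{n = n_1 + n_2,\, |n_1|,|n_2|\le N} C_{1-\al,N}^2 e^{t(\vp(n_1)+\vp(n_2))} \If_{n_1}(t)\If_{n_2}(t)$ for $n \in \Z^*$, using \eqref{sto3}. Since $\Ys_N(t)$ lives in $\H_{\le 2}$, it suffices by Lemma~\ref{LEM:OOT}\,(i)--(ii) to bound $\E[|\ft{\Ys_N}(t,n)|^2]$ and $\E[|\dl_h \ft{\Ys_N}(t,n)|^2]$. Using $\E[\If_{n_1}(t)\cj{\If_{m_1}(t)}] = \ind_{n_1 = m_1}\, t\,|\vp(n_1)|^2\jb{n_1}^{2\al}$ and Wick's theorem (as in Lemma~\ref{LEM:COV}), the second moment collapses to a double sum, and the key point is the bound $C_{1-\al,N}^4 \sum_{n = n_1+n_2,\,|n_i|\le N} |\vp(n_1)|^2 |\vp(n_2)|^2 \jb{n_1}^{2\al}\jb{n_2}^{2\al} \les t^2 \jb{n}^{0+}$, uniformly in $N$: here $|\vp(n_i)|^2\jb{n_i}^{2\al} \sim \jb{n_i}^{2\al - 2}$, and since $\al \ge \frac34$ we have $2\al - 2 \ge -\frac12$, so the sum $\sum_{|n_i|\le N} \jb{n_i}^{2(2\al-2)} = \sum \jb{n_i}^{4\al - 4}$ behaves like $N^{4\al - 3}$ (up to logs when $\al = \frac34$), which is exactly $C_{1-\al,N}^{-4}$ by \eqref{CN}. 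This yields $\E[|\ft{\Ys_N}(t,n)|^2] \les_t 1 = \jb{n}^{-d-2s_0}$ with $d = 1$ and any $s_0 < -\frac12$, matching the regularity $s < -\frac12$ claimed. For the time-increment bound one inserts the mean value theorem estimates $|e^{(t+h)\vp(n_i)} - e^{t\vp(n_i)}| \le |h|$ and $|\If_{n_i}(t+h) - \If_{n_i}(t)| \les |h|^{\frac12 - \eps}$ in an $L^2(\O)$ sense (the latter from the Gaussian scaling of the Wiener integral \eqref{sto4}, or more simply from the explicit variance computation), to obtain $\E[|\dl_h \ft{\Ys_N}(t,n)|^2] \les_t \jb{n}^{-d-2s_0}|h|^{2\ta}$ for some $\ta \in (0,1)$.

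Once these moment bounds are in hand, Lemma~\ref{LEM:OOT}\,(iii) (applied with $p = 2$) gives $\PP(\|\Ys_N\|_{\C^\be_{T_j} W^{s_1,\infty}_x} > c_0 \dl^{-1/2} T_j) \le C_{\be,2} c_0^{-2} \dl\, T_j^{-1}$ for $s < s_1 < -\frac12$ and suitable small $\be > 0$, uniformly in $N$. Defining $K_\dl = \{u \in C(\R_+; W^{s,\infty}(\T)) : \|u\|_{\C^\be_{T_j} W^{s_1,\infty}_x} \le c_0 \dl^{-1/2} T_j \text{ for all } j\in\N\}$ with $T_j = 2^j$, summing the geometric series $\sum_j T_j^{-1} = 1$ gives $\sup_N \PP(\Ys_N \notin K_\dl) < \dl$ for $c_0$ large. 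Compactness of $K_\dl$ in the compact-open topology follows verbatim from the argument in the proof of Proposition~\ref{PROP:tight}: Rellich's lemma plus Arzel\`a--Ascoli makes $\C^\be([0,T_j]; W^{s_1,\infty}(\T)) \hookrightarrow C([0,T_j]; W^{s,\infty}(\T))$ compact, and a diagonal extraction produces a subsequence converging uniformly on each compact time interval. This establishes tightness.

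The main obstacle—and the only place the hypothesis $\al \ge \frac34$ is genuinely used—is the verification of the uniform second-moment bound for $\ft{\Ys_N}(t,n)$, i.e.\ checking that the combinatorial sum $C_{1-\al,N}^4 \sum_{n_1+n_2 = n} \jb{n_1}^{4\al-4}\jb{n_2}^{4\al-4}$ is bounded (rather than divergent or vanishing) and has a finite limit; this is the tightness analogue of the convergence-of-covariance computation underlying Lemma~\ref{LEM:5}, and one should be slightly careful at the borderline case $\al = \frac34$ where logarithmic factors appear and the restriction to $\min(|n_1|,|n_2|) \gg N^{1-\eps}$ (as in Remark~\ref{REM:CA1}) is needed to extract the leading behavior. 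Everything else is a routine adaptation of Section~\ref{SEC:Z}, so I would simply note that the details parallel the proof of Proposition~\ref{PROP:tight} and omit the repetitive parts.
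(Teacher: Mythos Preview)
Your proposal is correct and follows essentially the same approach as the paper, which simply states that Lemma~\ref{LEM:tight2} follows from a slight modification of the proof of Proposition~\ref{PROP:tight} using \eqref{sto2a}, \eqref{sto3}, and \eqref{sto4} and omits the details. One minor remark: for the tightness argument you only need the \emph{upper} bound $C_{1-\al,N}^4 \sum_{n_1+n_2=n,\,|n_i|\le N} \jb{n_1}^{2\al-2}\jb{n_2}^{2\al-2} \les 1$, which follows directly from Cauchy--Schwarz and the definition~\eqref{CN} (exactly as in~\eqref{TT1}) without any need to isolate the region $\min(|n_1|,|n_2|) \gg N^{1-\eps}$ or treat the borderline $\al = \tfrac34$ separately---that refinement from Remark~\ref{REM:CA1} is only needed for the exact covariance limit in Lemma~\ref{LEM:5}, not here.
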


Lemma \ref{LEM:tight2} on tightness
follows from 
a slight modification of  the proof of Proposition \ref{PROP:tight}, 
using \eqref{sto2a}, \eqref{sto3}, and \eqref{sto4}.
Since the required modification
is straightforward, we omit details.
In the following, we focus on Lemma \ref{LEM:5}
which is 
an analogue
of Proposition \ref{PROP:uniq}.

\begin{proof} [Proof of Lemma \ref{LEM:5}]

Lemma \ref{LEM:5} follows
from a slight modification of the proof of Proposition \ref{PROP:uniq}, 
based on Lemmas \ref{LEM:COV}
and \ref{LEM:uniq2}.
Thus, we keep our discussion brief here.

By proceeding as in the proof of 
Lemma \ref{LEM:COV}
with \eqref{sto2a}, \eqref{sto3}, and \eqref{sto4}, 
it is easy to see that 
\begin{align*}
\E \big[\jb{\Ys_N (t), \psi} \big] = 0
\end{align*}

\noi
for any $N \in \N$, $t \in \R_+$,  and $\psi \in \D(\T)$.
By taking $N \to \infty$, 
we see that  a limit is a centered process (if it exists).

\medskip

\noi
{\bf $\bul$ Step 1:}
Let $t_1, t_2 \in \R_+$ and $\psi_1, \psi_2 \in \D(\T)$.
Proceeding as in \eqref{con4}
with \eqref{sto2a} and \eqref{sto3}, we have 
\begin{align}
\begin{aligned}
\E& \Big[\jb{\Ys_N (t_1), \psi_1} \jb{ \Ys_N (t_2), \psi_2}\Big]\\
&  =  C_{1-\al, N}^4 
\sum_{n, m\in \Z^*}
\cj{\ft \psi_1(n)} \ft \psi_2(m)\\
& \quad \times 
\sum_{\substack{n= n_1+n_2\\0 < |n_1|, |n_2|\leq N}} 
\sum_{\substack{m= m_1+m_2\\0 < |m_1|, |m_2|\leq N}} 
e^{t_1(\vp(n_1)+ \vp(n_2))} 
e^{-t_2(\vp(m_1)+ \vp(m_2))} \\
& \hphantom{XXXXXXXXXXXXX} \times 
\E\Big[\If_{n_1}(t_1) \If_{n_2}(t_1) \cj{\If_{m_1}(t_2) \If_{m_2}(t_2)}\Big].
\end{aligned}
\label{Ycon4}
\end{align}

\noi
A straightforward modification of \eqref{con5}
for the Wiener integral $\If_{n}$ in 
\eqref{sto4} yields
\begin{align}
\sum_{\substack{n= m_1+m_2\\0 < |m_1|, |m_2|\leq N}} 
\E\Big[\If_{n_1}(t_1) \If_{n_2}(t_1) \cj{\If_{m_1}(t_2) \If_{m_2}(t_2)}\Big]
= 2 (t_1 \wedge t_2)^2
\prod_{j = 1}^2 |\vp(n_j)|^2 \jb{n_j}^{2\al}
\label{Ycon5}
\end{align}

\noi
for any  $n_1, n_2 \in \Z^*$ with $|n_1|, |n_2|\le N$
and $n = n_1 + n_2 \ne 0$.
Thus, from \eqref{Ycon4} and \eqref{Ycon5}, 
we have 
\begin{align}
\begin{aligned}
& \lim_{N \to \infty} \E \Big[\jb{\Ys_N (t_1), \psi_1} \jb{ \Ys_N (t_2), \psi_2}\Big]\\
& \quad 
 =  2
(t_1 \wedge t_2)^2
 \sum_{n \in \Z^*}
\cj{\ft \psi_1(n)} \ft \psi_2(n) \\
& \quad 
 \quad \times 
 \lim_{N \to \infty} C_{1- \al, N}^4
\sum_{\substack{n= n_1+n_2\\|n_1|, |n_2|\leq N}} 
e^{(t_1- t_2)(\vp(n_1)+ \vp(n_2))} 
\prod_{j = 1}^2 |\vp(n_j)|^2 \jb{n_j}^{2\al}.
\end{aligned}
\label{Ycon6}
\end{align}

\noi
Arguing as in 
\eqref{con7}, \eqref{con8}, and \eqref{con9} with \eqref{phi2}, we have 
\begin{align}
\begin{split}
& \sum_{\substack{n= n_1+n_2\\|n_1|, |n_2|\leq N}} 
e^{(t_1- t_2)(\vp(n_1)+ \vp(n_2))} 
\prod_{j = 1}^2 |\vp(n_j)|^2 \jb{n_j}^{2\al}\\
& \quad = 
 \sum_{\substack{n= n_1+n_2\\|n_1|, |n_2|\leq N}} 
\prod_{j = 1}^2 |\vp(n_j)|^2 \jb{n_j}^{2\al}
+ O(e^{\max(t_1, t_2)}).
\end{split}
\label{Ycon6a}
\end{align}

\noi
Moreover, by proceeding as in Remark \ref{REM:CA1}
(see also the last part of the proof of Lemma \ref{LEM:COV}) with \eqref{phi2}, we have 
\begin{align}
\begin{split}
&  \lim_{N \to \infty}2 C_{1- \al, N}^4 \sum_{\substack{n= n_1+n_2\\|n_1|, |n_2|\leq N}} 
\prod_{j = 1}^2 |\vp(n_j)|^2 \jb{n_j}^{2\al}\\
& = 
 \lim_{N \to \infty}2 C_{1- \al, N}^4 \sum_{\substack{n= n_1+n_2\\
 N^{1-\eps}\ll |n_1|, |n_2|\leq N}} 
\prod_{j = 1}^2 |\vp(n_j)|^2 \jb{n_j}^{2\al}\\
& = \lim_{N\to \infty} 
C_{1- \al, N}^4 \sum_{\substack{n=n_1+n_2 \\  |n_1|,|n_2|\le N }} 
\frac{2}{\jb{n_1}^{2- 2\al} \jb{n_2}^{2- 2\al}} \\
&= 1, 
\end{split}
\label{Ycon6b}
\end{align}

\noi
where the last step follows from \eqref{con11} and \eqref{AA5}.
Hence, from \eqref{Ycon6}, 
 \eqref{Ycon6a}, and  \eqref{Ycon6b}
 with \eqref{sto2c}, 
we have 
\begin{align*}
 \lim_{N \to \infty} \E \Big[\jb{\Ys_N (t_1), \psi_1} \jb{ \Ys_N (t_2), \psi_2}\Big]
&  =  
(t_1 \wedge t_2)^2
 \sum_{n \in \Z^*}
\cj{\ft \psi_1(n)} \ft \psi_2(n) \\
& =   \E \Big[\jb{\upze (t_1), \psi_1} \jb{ \upze (t_2), \psi_2}\Big].
\end{align*}

Given $0 \le t_1 \le \dots \le t_m$ and $\{ \psi_j \}_{j = 1}^m \subset \D(\T)$, 
define 
an $m\times m$  matrix  $\Cs=(\Cs_{ij})_{1\le i,j\le m}$ 
by setting 
\begin{align*}
\Cs_{ij} =  \lim_{N \to \infty} \E \Big[\jb{\Ys_N (t_i ), \psi_i} \jb{ \Ys_N (t_j), \psi_j}\Big].
\end{align*}

\noi
Then,  from the discussion above
and the fact that the set of positive semi-definite matrices is closed, 
we see that the matrix $\Cs$ is 
positive semi-definite.

\medskip

\noi
{\bf $\bul$ Step 2:}
In order to establish an analogue of Lemma \ref{LEM:uniq2}, 
we need to modify the setup 
for  
the fourth moment theorem (Lemma \ref{LEM:FMT})
from $L^2(\T)$ to $L^2(\R_+ \times \T)$.
Define a centered Gaussian process
$W=\{W(f): f\in L^2(\R_+\times \T)\}$
by setting
\begin{align}
W(f) = \sum_{n\in \Z} \int_{\R_+} \ft f (t_n, n) dB_n(t_n), 
\label{BM2}
\end{align}

\noi
where $B_n$ is as in \eqref{BM1}.
Then, from  the independence of $\{B_n \}_{n \in \Z}$
(modulo the constraint $B_{-n} = \cj{B_n}$)
and the basic property of  Wiener integrals, we have 
\begin{align*}
\E[W(f_1)W(f_2)] = \jb{f_1,f_2}_{L^2(\R_+\times \T)}
\end{align*}

\noi
for any $f_1, f_2 \in L^2(\R_+\times \T)$.
Namely, $W$ is isonormal.
With this isonormal Gaussian process $W$, 
we can define 
the multiple stochastic integral
$I_k$ by \eqref{WW2}
for 
$f\in L^2(\R_+\times \T)$ 
with $\|f\|_{L^2(\R_+ \times \T)}=1$,
which 
can be extended to a linear isometry 
between $L^2_\text{sym}((\R_+\times \T)^k)$ 
and $\H_k$, 
where $L^2_\text{sym}((\R_+\times \T)^k)$
is the closed subspace of 
$L^2((\R_+\times \T)^k)$ 
consisting of symmetric functions, 
equipped with the scaled norm:
$\|f\|_{L^2_\text{sym}((\R_+\times \T)^k)}
= \sqrt{k!} \|f\|_{L^2((\R_+\times \T)^k)}$.
For general 
$f\in L^2((\R_+\times \T)^k)$, 
we define the multiple stochastic integral $I_k(f)$
by \eqref{WW3}, 
where we extend the symmetrization~\eqref{WW4}
to the current space-time setting.
We also extend the contraction~\eqref{N3a} to the current space-time setting
in an obvious manner.
See \cite[Appendix~B]{OWZ}
for a further discussion on multiple stochastic integrals
and the references therein.

Fix $ t \in \R_+$ and $\psi \in \D(\T)$.
Then, from \eqref{sto2a}, \eqref{sto3}, 
and \eqref{sto4}
with 
 \eqref{WW2} and \eqref{BM2}, 
we write 
 $\jb{\Ys_N(t), \psi}$ as
\begin{align*}
\jb{\Ys_N(t), \psi}
= I_2(\fns^t),
\end{align*}

\noi
where $\fns^t = \fns^t(t_1, t_2, x_1, x_2) \in L^2((\R_+ \times \T)^2)$ is a symmetric function
whose Fourier transform (in $x_1, x_2$) is given by 
\begin{align*}
\ft {\fns^t}(t_1, t_2, n_1, n_2)
&
 = - 
 \ind_{0 <  |n_1|, |n_2|\leq N}
 \cdot 
 C_{1-\al, N}^2
 e^{t \vp(n_1 + n_2)} \cj{\ft \psi(n_1+n_2)}\\
& \quad 
\times \prod_{j = 1}^2 \ind_{[0, t]}(t_j)
e^{-t_j \vp(n_j)}
 |\vp(n_j)| \jb{n_j}^{\al}
\end{align*}

\noi
for $t_1, t_2 \in \R_+$ and $n_1, n_2\in \Z$.
By noting 
that $ |\vp(n_j)| \jb{n_j}^{\al}\les \jb{n_j}^{\al-1}$, 
a straightforward modification 
of \eqref{B2b} (where we replace $\al$ by $1- \al$)
yields
\begin{align*}
\lim_{N\to\infty} \|  \fns^t \otimes_1 \fns^t \|_{L^2((\R_+\times \T)^2) } =0.
\end{align*}

\noi
Hence, by  proceeding as in the proof of Lemma~\ref{LEM:uniq2}
with Step 1, the fourth moment theorem (Lemma~\ref{LEM:FMT}
but adapted to the current space-time setting), 
and Lemma \ref{LEM:BDW}, 
we obtain the claimed convergence.
This concludes the proof of Lemma  \ref{LEM:5}.
\end{proof}

\begin{ackno}\rm

The authors would like to thank Martin Hairer for a valuable conversation
at the P4 conference in 2019.
They are also grateful to 
Guangqu Zheng for a discussion on the fourth moment method.
T.O.~was supported by the European Research Council (grant no.~864138 ``SingStochDispDyn'')
and by the EPSRC Mathematical Sciences Small Grant
(grant no. EP/Y033507/1).
N.T. was partially supported by the ANR
project Smooth ANR-22-CE40-0017.

\end{ackno}

\end{document}